\DeclareMathAlphabet{\mathpzc}{OT1}{pzc}{m}{it}
\newcommand{\tr}{\textnormal{tr}}
\newcommand{\dbar}{\overline{\partial}}
\newcommand{\ddt}[1]{\frac{\partial #1}{\partial t}}
\newcommand{\ddbar}{\sqrt{-1}\partial\dbar}
\def\cI{{\mathcal I}}
\def\cK{{\mathcal K}}
\def\cM{{\mathcal M}}
\def\cJ{{\mathcal J}}
\def\cP{{\mathcal P}}
\def\cR{{\mathcal R}}
\def\cS{{\mathcal S}}
\def\cT{{\mathcal T}}
\def\cV{{\mathcal V}}
\def\cX{{\mathcal X}}
\def\cZ{{\mathcal Z}}
\def\C{{\mathbb C}}
\def\PSH{\textnormal{PSH}}
\def\Vol{\textnormal{Vol}}
\newtheorem{theorem}{Theorem}[section]
\newtheorem{proposition}{Proposition}[section]
\newtheorem{lemma}{Lemma}[section]
\newtheorem{definition}{Definition}[section]
\newtheorem{corollary}{Corollary}[section]
\newtheorem{conjecture}{Conjecture}[section]
\numberwithin{equation}{section}
\begin{document}

\centerline{{\bf  NAKAI-MOISHEZON CRITERIONS FOR COMPLEX HESSIAN EQUATIONS } \footnote{Research supported in part by National Science Foundation grant  DMS-1711439 }}

\bigskip

\address{Department of Mathematics, Rutgers University, Piscataway, NJ 08854}

\medskip

\centerline{ \small  JIAN SONG}

 \bigskip
 \bigskip

{\noindent \small A{\scriptsize BSTRACT}. \footnotesize  The $J$-equation proposed by Donaldson is a complex Hessian quotient equation on K\"ahler manifolds. The solvability of the $J$-equation is proved by Song-Weinkove to be equivalent to the existence of a subsolution. It is also conjectured by Lejmi-Szekelyhidi to be equivalent to a stability condition in terms of holomorphic intersection numbers as an analogue of the Nakai-Moishezon criterion in algebraic geometry. The conjecture is recently proved by Chen under a stronger uniform stability condition. In this paper, we establish a Nakai-Moishezon type criterion for pairs of K\"ahler classes on analytic K\"ahler varieties. As a consequence,  we prove Lejmi-Szekelyhidi's original conjecture for the $J$-equation. We also apply such a criterion to obtain a family of constant scalar curvature K\"ahler metrics on smooth minimal models.}


\bigskip
\bigskip

\section{Introduction}


The $J$-equation is the critical equation of the $J$-flow introduced by Donaldson \cite{Do} in the framework of moment maps. Let $X$ be an $n$-dimensional compact K\"ahler manifold with two K\"ahler classes $\alpha$ and $\beta$ satisfying the normalization condition
\begin{equation}\label{normalcon}
\alpha^n = \alpha^{n-1}\cdot \beta, 
\end{equation}
or equivalently
\begin{equation}\label{intersec}
\int_X \alpha^n = \int_X \alpha^{n-1} \wedge \beta,
\end{equation}
where the integral on the right hand side of (\ref{intersec}) is calculated by choosing any K\"ahler forms in $\alpha$ and $\beta$.
For a given K\"ahler form $\chi \in \beta$, the $J$-equation is defined by
\begin{equation}\label{jeqn}
\omega^n = \omega^{n-1} \wedge \chi, 
\end{equation}
where $\omega\in \alpha$ is the desired K\"ahler form. The $J$-equation (\ref{jeqn}) can also be expressed as the complex Hessian quotient equation
$$tr_{\omega}(\chi) =  \frac{n \omega^{n-1}\wedge \chi}{\omega^n} = n .$$
At any given point $p\in X$, we can diagonalize both $\chi$ and $\omega$ such that $\chi$ is an identity matrix of size $n\times n$ and $\omega$ has positive eigenvalues $\lambda_1, \lambda_2, ..., \lambda_n$. Then equation (\ref{jeqn}) becomes 
$$\frac{\sigma_{n-1}(\Lambda)}{\sigma_{n}(\Lambda)} = n, $$
where $\Lambda=\{\lambda_j\}_{j=1}^n$ and $\sigma_k$ is the $k$-th elementary symmetric polynomial of $\Lambda$ of degree $k$. 

Both the $J$-equation and the $J$-flow are extensively studied in \cite{Ch1, Ch2, W1, W2, SW1, FLM, SW2, LS, CS, Sz, FL, ChG}. The convergence of the $J$-flow is first proved by Weinkove in \cite{W1} to be equivalent to the class condition $2\alpha - \beta>0$  for K\"ahler surfaces. The general case is proved by Song-Weinkove in \cite{SW1} by the $J$-flow that (\ref{jeqn}) admits a smooth solution $\omega$ if and only if there exists a K\"ahler form $\Omega\in \alpha$ such that
\begin{equation}\label{jsw}
n\Omega^n - (n-1)\Omega^{n-1}\wedge \chi >0
\end{equation}
as a positive $(n-1, n-1)$-form on $X$. The analytic positive condition (\ref{jsw}) is later interpreted in \cite{Sz} as a subsolution of the $J$-equation.  This immediately implies that there is obstruction to solve the global $J$-equation. There are many well-known examples such as certain pairs of K\"ahler classes on Hirzebruch surfaces for which the $J$-equation does not admit smooth solutions  (c.f. \cite{FL}).

The pointwise condition (\ref{jsw}) is however difficult to verify, and one would like to replace it by a holomorphic/topological condition. Of course,    (\ref{jsw}) always holds whenever  $n\alpha - (n-1) \beta>0$ is a K\"ahler class \cite{W2}.  This leads to following conjecture proposed by Lejmi-Szekelyhidi \cite{LS}. 
\begin{conjecture} \label{conls} Under the normalization condition (\ref{normalcon}),  the $J$-equation (\ref{jeqn}) admits a unique smooth solution  $\omega$
if and only for any $m$-dimensional analytic subvariety $Z$ of $X$ with $1\leq m \leq n-1$, 
\begin{equation}\label{conls1} 
\left( n\alpha^m - m \alpha^{m-1}\cdot \beta \right)\cdot Z = \int_Z (n\alpha^m - m\alpha^{m-1}\wedge \beta) >0. 
\end{equation}

\end{conjecture}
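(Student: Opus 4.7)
The plan is to handle the two directions of the equivalence separately, using Song-Weinkove's analytic criterion (\ref{jsw}) as the bridge between the smooth solvability of (\ref{jeqn}) and the numerical positivity (\ref{conls1}).

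For the necessity direction, suppose $\omega \in \alpha$ solves (\ref{jeqn}). At a regular point $p$ of an $m$-dimensional analytic subvariety $Z \subset X$, I would simultaneously diagonalize $\chi$ and $\omega$: let $\lambda_1 \leq \cdots \leq \lambda_n$ be the eigenvalues of $\omega$ with respect to $\chi$, satisfying $\sum_i \lambda_i^{-1} = n$ by the $J$-equation. Let $\mu_1 \leq \cdots \leq \mu_m$ be the generalized eigenvalues of $\omega|_{T_pZ}$ with respect to $\chi|_{T_pZ}$. By the Courant-Fischer minimax principle one has the interlacing inequality $\mu_i \geq \lambda_i$, hence $\sum_{i=1}^m \mu_i^{-1} \leq \sum_{i=1}^m \lambda_i^{-1} < n$ (strictly, because $m < n$ and all $\lambda_j > 0$). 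A direct expansion in elementary symmetric polynomials identifies the restriction $n\omega^m|_{T_pZ} - m\omega^{m-1}\wedge\chi|_{T_pZ}$ with $m!(n - \sum_i\mu_i^{-1})\prod_i\mu_i$ times the unit $(m,m)$-form determined by $\chi|_{T_pZ}$, which is strictly positive. Integrating over the regular locus of $Z$ then gives (\ref{conls1}).

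For the much harder sufficiency direction, by Song-Weinkove's theorem it suffices to produce a K\"ahler form $\Omega \in \alpha$ satisfying (\ref{jsw}). I would employ a continuity method: for $t \in [0,1]$, modify $\chi$ to $\chi_t = (1-t)\chi_0 + t\chi$ with some reference $\chi_0 \in \alpha$, rescaling appropriately to preserve a $t$-dependent normalization condition. At $t = 0$ the class condition $n\alpha - (n-1)[\chi_0] > 0$ can be arranged to hold, so (\ref{jsw}) is automatic there by \cite{W2}; and the set of $t$ admitting a subsolution is open by a standard barrier argument in the subsolution cone. If solvability failed to extend to $t = 1$, weak compactness at the threshold $t_* \in (0,1]$ should yield a limiting positive closed current in the relevant K\"ahler class whose singular locus concentrates on a proper analytic subvariety $Z$, via Demailly regularization and Siu's decomposition. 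One must then translate the breakdown at the boundary into the numerical obstruction $(n\alpha^m - m\alpha^{m-1}\cdot\beta)\cdot Z \leq 0$, contradicting (\ref{conls1}). Because $Z$ may itself be singular, this needs to be iterated on dimension, which forces the framework of K\"ahler pairs on analytic varieties.

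The principal obstacle is the extraction step: showing that the failure of the continuity method produces a subvariety $Z$ on which the mass concentration of the limit current yields \emph{exactly} the intersection $n\alpha^m - m\alpha^{m-1}\cdot\beta$, rather than a weaker quantity. This forces one to work systematically on singular K\"ahler varieties and to perform an induction on dimension via resolutions; controlling the exceptional contributions from a resolution and verifying that strict positivity descends correctly is the core technical input. This is presumably what the paper's main theorem, a Nakai-Moishezon criterion for pairs of K\"ahler classes on analytic K\"ahler varieties, is designed to provide, with the conjecture then following as a corollary.
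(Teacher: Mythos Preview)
Your reduction is correct at the top level: the paper indeed deduces the conjecture (Corollary~\ref{main3}) from Theorem~\ref{main1} together with the Song--Weinkove subsolution criterion \cite{SW1}, and your necessity argument via eigenvalue interlacing is standard and fine (the paper does not spell it out).

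Where your sketch diverges substantially is in the proposed mechanism for the sufficiency direction, i.e.\ for Theorem~\ref{main1} itself. You outline a proof by contradiction: run continuity, let it fail at some $t_*$, and extract from the limiting current a subvariety $Z$ on which the numerical inequality is violated. The paper does \emph{not} argue this way. Its proof is constructive and inductive on $\dim Z$: assuming one already has a subsolution near every subvariety of dimension $\leq m-1$, one builds a subsolution near each $m$-dimensional $Z$ and then glues. The key ingredients are (i) a resolution $\hat Z\to Z$ and a perturbation of classes so that a \emph{twisted} $J$-equation (Theorem~\ref{chg}, due to Chen) can be solved on $\hat Z$; (ii) a Demailly--Paun style mass concentration, but carried out on the product $\hat Z\times\hat Z$ with the twisted $J$-equation to produce a K\"ahler current $\Omega$ whose local regularizations satisfy $\cP_\chi(\Omega^{(r)})<n$ (Theorem~\ref{masscon}); and (iii) the B{\l}ocki--Ko{\l}odziej local regularization and regularized-maximum trick to glue local potentials across balls and across the singular locus (\S6). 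The continuity method does appear (Lemma~\ref{s4cont}), but only to ensure the twisted equation on $\hat Z$ is solvable for all $t>0$; it is not used to manufacture a destabilising subvariety.

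The ``extraction step'' you flag as the principal obstacle is therefore a genuine gap in your outline, and it is not how the paper closes the argument. There is no known direct route from failure of the continuity method for the $J$-equation to a numerical inequality $(n\alpha^m-m\alpha^{m-1}\cdot\beta)\cdot Z\le 0$; the difficulty is precisely that limits of subsolutions need not detect the Hessian-quotient quantity $\cP_\chi$ on a specific subvariety. The paper sidesteps this entirely by never letting the method fail: it builds the subsolution near every $Z$ by hand, using the induction hypothesis to handle the singular set $\cS_Z$ and the product-space concentration to handle the smooth locus.
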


 Conjecture \ref{conls} is proved for toric K\"ahler manifolds in \cite{CS}. Recently, a major progress is made toward this conjecture by Chen \cite{ChG}, proving Conjecture \ref{conls}  under a slightly stronger topological condition (uniform $J$-positivity, see Definition \ref{jamp} (3)). Chen's result is generalized by Datar-Pingali \cite{DaP} to a more general family of complex Hessian equations. Conjecture \ref{conls} is also proved in \cite{DaP} for projective K\"ahler manifolds and positive line bundles.  

The main goal of this paper is to prove Conjecture \ref{conls} and it suffices to construct a subsolution satisfying the analytic positive condition (\ref{jsw}). It is crucial to find the connection between (\ref{jsw}) and   (\ref{conls1}) as an analogue of the Nakai-Moishezon criterion for a K\"ahler class on a K\"ahler manifold established by Demailly-Paun \cite{DP}. This leads us to prove a Nakai-Moishezon type criterion for the $J$-equation. The following is the main result of the paper.

\begin{theorem} \label{main1} Let $X$ be an $n$-dimensional compact analytic variety embedded in  a K\"ahler manifold $\cM$. Let $\alpha$ and $\beta$ be two K\"ahler classes in an open neighborhood of $X$ in $\cM$  satisfying
$$ \left. \frac{  \alpha^{n-1}\cdot \beta  }{ \alpha^n } \right|_Y=  \frac{ \int_Y \alpha^{n-1}\wedge \beta }{ \int_Y \alpha^n  }  \leq 1$$
for any component $Y$ of $X$.
Then the following two conditions are equivalent.

\begin{enumerate}

\item For any $m$-dimensional analytic subvariety $Z$  of $X$, 
$$
\left( n  \alpha^m - m\alpha^{m-1}\cdot \beta \right) \cdot  Z = \int_Z \left( n  \alpha^m - m\alpha^{m-1}\wedge \beta \right) >0, 
$$
where $1\leq m <n$. 

\medskip

\item For any smooth K\"ahler form $\chi\in \beta$ in an open neighborhood $U$ of $X$, there exists a smooth K\"ahler form $\omega\in \alpha$ in some open neighborhood $V \subset U$ of $X$ such that in $V$, 
$$n \omega^{n-1} - (n-1) \omega^{n-2}\wedge \chi >0. $$

\end{enumerate}

\end{theorem}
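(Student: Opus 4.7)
\emph{Direction $(2)\Rightarrow(1)$.} The plan for the easier direction is a pointwise/linear-algebra reduction. Given a smooth K\"ahler form $\omega\in\alpha$ with $n\omega^{n-1}-(n-1)\omega^{n-2}\wedge\chi>0$ in a neighborhood of $X$, simultaneously diagonalize $\omega,\chi$ at a point so that $\chi=i\sum dz^j\wedge d\bar z^j$ and $\omega=i\sum\lambda_j dz^j\wedge d\bar z^j$; the $(n-1,n-1)$-positivity is then equivalent to $\sum_{i\ne k}\lambda_i^{-1}<n/(n-1)$ for every $k$. For any holomorphic immersion $f$ of a smooth $m$-manifold into that neighborhood, the generalized eigenvalues $\rho_1\le\cdots\le\rho_m$ of the restricted pair $(f^*\omega,f^*\chi)$ interlace the $\lambda_j$ by Courant--Fischer, so $\rho_k\ge\lambda_k$ and hence $\sum_{k=1}^m\rho_k^{-1}\le\sum_{k=1}^m\lambda_k^{-1}<n/(n-1)\le n/m$ (using $m\le n-1$). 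This is exactly the condition for $n(f^*\omega)^m-m(f^*\omega)^{m-1}\wedge(f^*\chi)$ to be a nonnegative $(m,m)$-form on the source. Integrating over a desingularization of any irreducible $m$-dimensional subvariety $Z\subseteq X$ then produces $(1)$.

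\emph{Direction $(1)\Rightarrow(2)$, the main work.} I would proceed by induction on $n=\dim X$, following the template of the Demailly--P\u{a}un proof of the Nakai--Moishezon criterion for K\"ahler classes. The base case $n=2$ reduces hypothesis $(1)$ to positivity of $2\alpha-\beta$ on every curve, whence the classical Nakai--Moishezon/Demailly--P\u{a}un theorem gives that $2\alpha-\beta$ is K\"ahler and immediately furnishes the pointwise subsolution. For the inductive step, I would introduce a continuity parameter, for instance $\chi_t=t\chi$ with $t\in[0,1]$, and let $T\subseteq[0,1]$ be the set of $t$ for which a smooth $\omega_t\in\alpha$ with $n\omega_t^{n-1}-(n-1)\omega_t^{n-2}\wedge\chi_t>0$ exists near $X$. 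Openness of $T$ is immediate from openness of the $(n-1,n-1)$-positivity cone. Closedness is the substantial point: if $t_k\nearrow t_*$ with $\omega_{t_k}$ existing but no subsolution at $t_*$, the positive $(n-1,n-1)$-currents $n\omega_{t_k}^{n-1}-(n-1)t_k\omega_{t_k}^{n-2}\wedge\chi$ must concentrate mass on some proper analytic subvariety $Z\subsetneq X$. Using $(1)$ restricted to subvarieties of $Z$ together with the inductive hypothesis applied to $Z$, one obtains a subsolution on a neighborhood of $Z$ in $\cM$, and a Richberg/maximum-type regularization glues it with the degenerate global limit to produce a genuine subsolution at $t_*$. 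Since $X$ may be singular, these constructions are performed on a resolution $\pi:\widetilde X\to X$ with a small $\epsilon$-perturbation by a K\"ahler class on $\widetilde X$ to restore K\"ahlerness of $\pi^*\alpha$, and one passes to $\epsilon\to 0$ via uniform estimates; the embedding $X\hookrightarrow\cM$ is used throughout to realize $\alpha,\beta$ coherently by smooth ambient K\"ahler forms on the relevant neighborhoods.

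\emph{Main obstacle.} The hardest step will be the mass-concentration analysis in the closedness argument: extracting from a failing continuity sequence a limiting positive current whose singular support is an honest analytic subvariety of $X$, with numerical content controlled by $(1)$, and then gluing the inductively produced local subsolution onto the degenerate global limit without destroying $(n-1,n-1)$-positivity. The regularization must be adapted to the Hessian quotient positivity rather than plain $(1,1)$-positivity, so the Demailly--P\u{a}un toolkit serves as a template but not a direct import; the analytic, possibly singular, nature of $X$ introduces further subtleties in identifying the concentration locus and in maintaining compatibility of K\"ahler data between $X$, $\widetilde X$, and $\cM$.
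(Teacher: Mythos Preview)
Your outline for $(2)\Rightarrow(1)$ is essentially correct (though the pointwise condition is $\sum_{i\ne k}\lambda_i^{-1}<n$, not $n/(n-1)$; the rest of the restriction/interlacing argument is fine and matches the paper's use of $\cP_\chi(\omega)<n$).

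For $(1)\Rightarrow(2)$ your high-level architecture---induction, continuity, mass concentration, gluing in the Demailly--Paun style, working on a resolution---is the right one and matches the paper. But the closedness step as you describe it has a genuine gap. You propose to analyze a degenerating family $\omega_{t_k}$ along the path $\chi_t=t\chi$ and argue that the $(n-1,n-1)$-currents $n\omega_{t_k}^{n-1}-(n-1)t_k\omega_{t_k}^{n-2}\wedge\chi$ ``must concentrate mass on some proper analytic subvariety.'' There is no mechanism forcing this: you have no a~priori estimates on the $\omega_{t_k}$, the limiting object is an $(n-1,n-1)$-current rather than a $(1,1)$-current, and even if some concentration occurred it is unclear how to convert that into a $(1,1)$-form subsolution. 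This is not how the mass-concentration step works in the paper (nor in Chen or Demailly--Paun).

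What the paper actually does is an \emph{active} construction, not a passive limit extraction. Fixing an $m$-dimensional subvariety $Z$ and its resolution $\hat Z$, one works on the doubled space $\cZ=\hat Z\times\hat Z$, solves a \emph{twisted $J$-equation} there (Theorem~\ref{chg}) with a right-hand side $F_{\cZ,s}$ engineered to blow up along the diagonal $\Delta$ as $s\to 0$, and then pushes forward by $\pi_1$ to get a closed positive $(1,1)$-current $\Omega\in(1-\delta)\alpha$ on $\hat Z$ with $\cP_\chi(\Omega^{(r)})<n-\varepsilon$ after local regularization. The diagonal concentration (Lemma~\ref{s5mass}) is what makes $\Omega$ a K\"ahler \emph{current}, and the convexity of the operator $\cP_\chi(\cdot)$ is what propagates the Hessian-quotient positivity through the push-forward and the regularization (Lemmas~\ref{s3con1}, \ref{s3con2}, \ref{s5l55}). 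The continuity method in the paper is used not on $X$ but to solve this twisted equation on $\hat Z$ for all parameters (Lemma~\ref{s4cont}), and the induction is on the dimension $m$ of subvarieties $Z\subset X$, building subsolutions in neighborhoods of $Z$ and gluing via the B\l ocki--Ko\l odziej local-regularization trick together with careful Lelong-number bookkeeping (Lemma~\ref{s6l63}). Your Richberg-type gluing is on the right track, but without the doubled-space construction producing the current $\Omega$ there is nothing to glue.
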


In Theorem \ref{main1}, $\cM$ can be open or incomplete. In particular,  we can let $\cM = X$ if $X$ is a K\"ahler manifold, and the following corollary is immediate after suitable rescaling.

\begin{corollary} \label{main2} Let $X$ be an $n$-dimensional compact K\"ahler manifold and let $\alpha$ and $\beta$ be two K\"ahler classes on $X$. If for any $m$-dimensional analytic subvariety $Z$  of $X$ with $1\leq m \leq n-1$, 
\begin{equation}\label{slope1}
m\left. \frac{ \alpha^{m-1}\cdot \beta }{  \alpha^m  } \right|_Z<  n\frac{ \alpha^{n-1}\cdot \beta } {    \alpha^n} , 
\end{equation}
then  for any smooth K\"ahler form $\chi\in \beta$, there exists a smooth K\"ahler form $\omega\in \alpha$ such that at any point $p\in X$ and $m$-dimensional subspace $W$ of $T_pX$
\begin{equation} \label{slope2}
m\left. \frac{ \omega^{m-1}\wedge \chi}{ \omega^{m} } \right|_W <   n \frac{  \alpha^{n-1}\cdot \beta } {    \alpha^n  }.
\end{equation} 
\end{corollary}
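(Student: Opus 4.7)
The plan is to deduce Corollary \ref{main2} from Theorem \ref{main1} by rescaling $\beta$ so that the normalization hypothesis of Theorem \ref{main1} holds, and then observing that the resulting Song--Weinkove $(n-1,n-1)$-positivity is in fact stronger than the slope bound (\ref{slope2}) on all $m$-dimensional subspaces for $1\le m\le n-1$.

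The first step I would take is to set $c := n(\alpha^{n-1}\!\cdot\beta)/\alpha^n$ and replace $\beta$ by $\tilde\beta := (n/c)\beta$, so that $\alpha^{n-1}\!\cdot\tilde\beta = \alpha^n$; the hypothesis of Theorem \ref{main1} (with $\cM = X$) is then met with equality. Multiplying (\ref{slope1}) through by $n/c$ converts it to
\[
m\,\frac{\alpha^{m-1}\!\cdot\tilde\beta}{\alpha^m}\Big|_Z < n \qquad\text{for every analytic subvariety } Z\subset X,\ 1\le\dim Z\le n-1,
\]
which is exactly condition (1) of Theorem \ref{main1} for the pair $(\alpha,\tilde\beta)$. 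Given $\chi\in\beta$, I would then set $\tilde\chi := (n/c)\chi\in\tilde\beta$ and apply Theorem \ref{main1} to produce $\omega\in\alpha$ with $n\omega^{n-1} - (n-1)\omega^{n-2}\wedge\tilde\chi > 0$ on $X$. Diagonalizing at $p\in X$ so that $\omega = I$ and $\tilde\chi = \mathrm{diag}(\tilde\mu_1,\ldots,\tilde\mu_n)$, expanding in this basis shows the $(n-1,n-1)$-positivity is equivalent to $\sum_{j\neq i}\tilde\mu_j < n$ for every $i$; rescaling back, the eigenvalues $\mu_i := (c/n)\tilde\mu_i$ of $\chi$ with respect to $\omega$ satisfy $\sum_{j\neq i}\mu_j < c$ for every $i$.

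For the final step, I would bound $m\,\omega^{m-1}\wedge\chi/\omega^m|_W$ for an arbitrary $m$-dimensional subspace $W\subset T_pX$. Taking an $\omega$-orthonormal basis of $W$ and writing $P_W$ for $\omega$-orthogonal projection onto $W$, in the diagonal basis above one has
\[
m\,\frac{\omega^{m-1}\wedge\chi}{\omega^m}\Big|_W = \mathrm{tr}(P_W\chi P_W) = \sum_{i=1}^n q_i\mu_i,\qquad q_i = (P_W)_{ii}\in[0,1],\ \sum_i q_i = m.
\]
Maximizing over such weights gives the sum of the $m$ largest $\mu_i$'s, which—since $\mu_i>0$ and $m\le n-1$—is bounded above by $\sum_{j\neq i_0}\mu_j$ for any index $i_0$ not among the top $m$; by the previous paragraph this last sum is strictly less than $c = n(\alpha^{n-1}\!\cdot\beta)/\alpha^n$, yielding (\ref{slope2}). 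The only substantive input beyond Theorem \ref{main1} is this elementary Ky Fan--type observation on symmetric-function bounds; the sole bit of care is tracking strict inequalities through the rescaling, which goes through because $\chi$ is strictly positive.
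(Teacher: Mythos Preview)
Your proposal is correct and follows essentially the same route as the paper: rescale $\beta$ to meet the normalization in Theorem \ref{main1}, apply that theorem with $\cM=X$, and then translate the resulting $(n-1,n-1)$-positivity into the pointwise slope bound. The paper simply asserts (in the remark following the corollary) that (\ref{slope2}) is equivalent to $n\frac{\alpha^{n-1}\cdot\beta}{\alpha^n}\omega^{n-1}-(n-1)\omega^{n-2}\wedge\chi>0$, whereas you spell out the elementary linear-algebra step (the Ky Fan--type bound) explicitly; this is a welcome clarification but not a different argument.
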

 
We remark that 
$$
tr_{\omega|_W}(\chi|_W) = m\left. \frac{ \omega^{m-1}\wedge \chi}{ \omega^{m} } \right|_W $$ for $1\leq m \leq n-1$ and (\ref{slope2}) is equivalent to 
$$n \frac{  \alpha^{n-1}\cdot \beta } {\alpha^n  } \omega^{n-1} - (n-1) \omega^{n-2}\wedge \chi >0$$
as an $(n-1,n-1)$-form on $X$.

 The condition (\ref{slope1}) is analogous to the topological slope condition for Hermitian vector bundles over K\"ahler manifolds, while condition (\ref{slope2}) is the corresponding pointwise slope condition. Therefore, we define the following positive conditions as an analogue of the Nakai-Moishezon Criterion.

\begin{definition} \label{jamp} Let $X$ be an $n$-dimensional compact K\"ahler manifold with two K\"ahler classes $\alpha$ and $\beta$.

\begin{enumerate}

\item The pair $(\alpha, \beta)$ is said to be $J$-positive if for any $m$-dimensional analytic subvariety $Z$ of $X$, 

$$
m\left. \frac{ \alpha^{m-1}\cdot \beta }{  \alpha^m  } \right|_Z<  n\frac{ \alpha^{n-1}\cdot \beta } {    \alpha^n}. 
$$
\item The pair is  said to be $J$-nef if for any $m$-dimensional subvariety $Z$ of $X$, 

$$
m\left. \frac{ \alpha^{m-1}\cdot \beta }{  \alpha^m  } \right|_Z \leq  n\frac{ \alpha^{n-1}\cdot \beta } {    \alpha^n} . 
$$

\item The pair is  said to be uniformly $J$-positive if there exists $\varepsilon>0$ such that for any $m$-dimensional analytic subvariety $Z$ of $X$, 

$$
m\left. \frac{ \alpha^{m-1}\cdot \beta }{  \alpha^m  } \right|_Z \leq  (n-\varepsilon) \frac{ \alpha^{n-1}\cdot \beta } {    \alpha^n} . 
$$

\end{enumerate}

\end{definition}

Obviously, $(3) \Rightarrow (1) \Rightarrow (2)$ and Definition \ref{jamp} can be generalized to analytic K\"ahler varieties.  The uniform $J$-positive condition (3) is introduced and proved by Chen  \cite{ChG} to be equivalent to the solvability of the $J$-equation. Our next result settles the original conjecture of Lejmi-Szekleyhidi (Conjecture \ref{conls}).

\begin{corollary} \label{main3} Let $X$ be an $n$-dimensional  K\"ahler manifold with two K\"ahler classes $\alpha$ and $\beta$.  For any K\"ahler form $\chi\in \beta$, the J-equation
\begin{equation} \label{jeqn2}
\frac{\omega^{n-1}\wedge \chi }{\omega^n }=  \frac{\alpha^{n-1}\cdot \beta }{\alpha^n}
\end{equation}
admits a unique smooth solution  $\omega\in \alpha$ if and only if the pair $(\alpha, \beta)$ is $J$-positive as in Definition \ref{jamp}. 

\end{corollary}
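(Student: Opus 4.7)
The plan is to deduce Corollary \ref{main3} from Corollary \ref{main2} combined with the Song--Weinkove subsolution theorem (\ref{jsw}). First I would reduce to the normalized case (\ref{normalcon}) by replacing $\beta$ with $\beta/c$ and $\chi$ with $\chi/c$, where $c=\alpha^{n-1}\cdot\beta/\alpha^n$. Both $J$-positivity (Definition \ref{jamp}) and equation (\ref{jeqn2}) are invariant under a common rescaling of $\beta$ and $\chi$, so after this reduction the $J$-equation takes the form $\omega^n=\omega^{n-1}\wedge\chi$ studied in \cite{SW1}, and Definition \ref{jamp}(1) becomes exactly the intersection inequality appearing in Corollary \ref{main2}.

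For the necessity direction, suppose $\omega\in\alpha$ solves $tr_\omega\chi=n$ pointwise. At any point $p$, simultaneously diagonalize so that $\omega|_p$ is the identity and $\chi|_p=\mathrm{diag}(\mu_1,\ldots,\mu_n)$ with $\mu_i>0$ and $\sum_i\mu_i=n$. For any $m$-dimensional subspace $W\subset T_pX$, the Ky Fan maximum principle gives
$$m\,\frac{\omega^{m-1}\wedge\chi}{\omega^m}\bigg|_W = tr_{\omega|_W}(\chi|_W) \leq \mu_{[1]}+\cdots+\mu_{[m]} < \sum_{i=1}^n \mu_i = n,$$
where $\mu_{[1]}\geq\cdots\geq\mu_{[n]}$ is the decreasing rearrangement and strictness uses $\mu_{[m+1]},\ldots,\mu_{[n]}>0$. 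Consequently $(n\omega^m-m\omega^{m-1}\wedge\chi)|_W$ is a strictly positive multiple of $\omega^m|_W$. Applying this at smooth points of any $m$-dimensional analytic subvariety $Z\subset X$ with $W=T_pZ$ and integrating over the smooth locus (which has full measure) yields $\int_Z(n\omega^m-m\omega^{m-1}\wedge\chi)>0$, which is precisely $J$-positivity in Definition \ref{jamp}(1).

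For sufficiency I would apply Corollary \ref{main2} directly: $J$-positivity of $(\alpha,\beta)$ produces $\omega\in\alpha$ satisfying the pointwise slope bound (\ref{slope2}) for every $m$-dimensional subspace $W$ at every point. By the remark following Corollary \ref{main2}, specializing to $m=n-1$ translates this into
$$n\omega^{n-1}-(n-1)\omega^{n-2}\wedge\chi>0$$
in the normalized case, which is exactly the Song--Weinkove subsolution condition (\ref{jsw}); their theorem then supplies a smooth solution to the $J$-equation. Uniqueness in $\alpha$ is standard: two solutions differ by $\sqrt{-1}\partial\bar\partial\phi$, and the maximum principle applied to the concave Hessian quotient equation forces $\phi$ to be constant. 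The substance of the argument sits entirely in Corollary \ref{main2} (equivalently Theorem \ref{main1}); once it is granted, Corollary \ref{main3} is essentially a dictionary translating between the pointwise slope conditions, the subsolution inequality (\ref{jsw}), and solvability of the $J$-equation.
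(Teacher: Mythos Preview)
Your proposal is correct and follows essentially the same approach as the paper, which disposes of Corollary \ref{main3} in a single sentence: ``Corollary \ref{main3} follows by combining Corollary \ref{main2} and the result of \cite{SW1}.'' You have supplied the details the paper omits---in particular the necessity direction via the pointwise Ky Fan--type eigenvalue bound and integration over subvarieties, and the explicit reduction to the normalized case---but the skeleton (Corollary \ref{main2} $\Rightarrow$ subsolution (\ref{jsw}) $\Rightarrow$ solvability by \cite{SW1}) is exactly the paper's.
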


The proof of Theorem \ref{main1} follows the road map in \cite{ChG} based on the techniques from \cite{DP}. The key idea of \cite{ChG} is to apply Blocki-Kolodziej's gluing trick by local regularization \cite{BK} instead of the global regularization  in \cite{DP}. The approach of \cite{ChG} relies on the induction argument for smooth subvarieties of $X$. Our approach removes the smoothness assumption and has to work on the degenerate $J$-equation instead. We expect that Theorem \ref{main1} can be generalized to a family of complex Hessian quotient equations by applying the work in \cite{DaP}.

We would also like to extend Conjecture \ref{conls} to the singular case. 

\begin{conjecture} Let $X$ be an $n$-dimensional  normal K\"ahler variety with two K\"ahler classes $\alpha$ and $\beta$. If $(\alpha, \beta)$ is $J$-positive, then  for any K\"ahler form $\chi\in \beta$, the J-equation
$$
\omega^{n-1}\wedge \chi  =  \left( \frac{\alpha^{n-1}\cdot \beta }{\alpha^n} \right) \omega^n
$$
admits a unique solution $\omega$ as a K\"ahler current  with bounded local potentials. 

\end{conjecture}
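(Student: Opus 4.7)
The plan is to reduce the singular problem to Corollary \ref{main3} via a log resolution together with a perturbation of the given K\"ahler classes. Fix a log resolution $\pi\colon \tilde X\to X$ and a K\"ahler form $\tilde\omega$ on $\tilde X$. Since $X$ is a normal K\"ahler variety, $\alpha$ and $\beta$ pull back to semipositive big classes on $\tilde X$, and I would consider the K\"ahler perturbations
\[
\alpha_\epsilon=\pi^*\alpha+\epsilon[\tilde\omega],\qquad \beta_\epsilon=\pi^*\beta+\epsilon[\tilde\omega],\qquad \chi_\epsilon=\pi^*\chi+\epsilon\tilde\omega.
\]
The goal is to solve the smooth $J$-equation on $\tilde X$ for each small $\epsilon>0$ by Corollary \ref{main3}, obtaining $\tilde\omega_\epsilon=\omega_{\alpha,\epsilon}+\ddbar\tilde\varphi_\epsilon$ with $\tilde\omega_\epsilon^{n-1}\wedge\chi_\epsilon=c_\epsilon\tilde\omega_\epsilon^n$ and $c_\epsilon=\alpha_\epsilon^{n-1}\cdot\beta_\epsilon/\alpha_\epsilon^n\to c_0=\alpha^{n-1}\cdot\beta/\alpha^n$, and then to pass to the limit as $\epsilon\to 0$.

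The central geometric step is to check that $(\alpha_\epsilon,\beta_\epsilon)$ is $J$-positive on $\tilde X$ for all sufficiently small $\epsilon>0$. For an irreducible subvariety $\tilde Z\subset\tilde X$ with $\dim\pi(\tilde Z)=\dim\tilde Z$, the intersection numbers of $(\alpha_\epsilon,\beta_\epsilon)$ with $\tilde Z$ converge to those of $(\alpha,\beta)$ with $\pi(\tilde Z)$, and the $J$-positivity hypothesis on $X$ supplies the strict slope inequality for small $\epsilon$. When $\tilde Z$ is contained in (or contracts through) the exceptional locus, the classes $\pi^*\alpha|_{\tilde Z}$ and $\pi^*\beta|_{\tilde Z}$ are pulled back from a strictly lower-dimensional image, so both numerator and denominator of the slope ratio are dominated by the $\epsilon$-perturbation and by the slopes on lower-dimensional subvarieties of $X$. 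A bookkeeping combining a Demailly--P\u{a}un/Boucksom-style numerical analysis with an induction on $\dim\pi(\tilde Z)$ should yield uniform $J$-positivity of $(\alpha_\epsilon,\beta_\epsilon)$, and then Corollary \ref{main3} delivers the smooth solutions $\tilde\omega_\epsilon$.

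With smooth solutions in hand, the next step is a uniform $L^\infty$ bound on the normalized potentials $\tilde\varphi_\epsilon$ (say $\sup_{\tilde X}\tilde\varphi_\epsilon=0$). The recent Guo--Phong--Tong pluripotential technique for complex Hessian quotient equations provides such an estimate, depending only on volume and mass data that remain controlled as $\epsilon\to 0$. Extracting a weak limit, $\tilde\varphi_0$ is a bounded $\pi^*\alpha$-plurisubharmonic function on $\tilde X$, and $\omega:=\pi_*(\omega_{\alpha,0}+\ddbar\tilde\varphi_0)$ is a K\"ahler current on $X$ with bounded local potentials (using normality of $X$ to extend the bounded potentials across the singular locus). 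Bedford--Taylor theory gives meaning to $\omega^n$ and $\omega^{n-1}\wedge\chi$, and the equation passes to the limit off a pluripolar set. Standard Evans--Krylov and Schauder arguments on the smooth locus $X^\circ$ upgrade the limit to a classical smooth solution there, and uniqueness of bounded solutions reduces to a comparison principle in the spirit of the degenerate complex Monge--Amp\`ere theory.

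The step I expect to be the main obstacle is the second one: propagating the slope positivity from $(\alpha,\beta)$ on $X$ to $(\alpha_\epsilon,\beta_\epsilon)$ on $\tilde X$ uniformly, in particular for subvarieties contracted by $\pi$, where both sides of the slope inequality are dominated by the perturbation and the analysis becomes numerically delicate. This is precisely the type of subvariety-by-subvariety positivity book-keeping that sits at the heart of the Demailly--P\u{a}un proof of the Nakai--Moishezon criterion for K\"ahler classes, and carrying it out for the specific slope inequalities attached to the $J$-equation on the resolution is where the genuine work of the argument will lie; once that is secured, the remaining pluripotential-theoretic and regularity steps should go through by now-standard methods.
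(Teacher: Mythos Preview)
The statement you are trying to prove is presented in the paper as a \emph{conjecture}; the paper does not prove it and explicitly treats it as an open problem (the surrounding discussion speaks of what the author ``expects'' and points to the analogy with \cite{BEGZ}). So there is no proof in the paper to compare against, and your proposal should be read as a programme for attacking an open question rather than as a reconstruction of an existing argument.

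On the merits of your outline: you have the right architecture (resolve, perturb, apply Corollary~\ref{main3}, take a limit) and you have correctly identified the propagation of $J$-positivity to the resolution as the crux. But that step is harder than ``bookkeeping''. With your symmetric perturbation $\alpha_\epsilon=\pi^*\alpha+\epsilon[\tilde\omega]$, $\beta_\epsilon=\pi^*\beta+\epsilon[\tilde\omega]$, consider an $m$-dimensional $\tilde Z\subset\tilde X$ with $\pi(\tilde Z)=W$ of strictly smaller dimension $k<m$. All products $(\pi^*\alpha)^j\cdot(\pi^*\beta)^l|_{\tilde Z}$ with $j+l>k$ vanish, so both numerator and denominator of the slope are polynomials in $\epsilon$ whose leading and subleading terms mix intersection numbers on $W$ with fibre integrals of $[\tilde\omega]$; the resulting inequality is \emph{not} a consequence of $J$-positivity on $X$ plus elementary numerics. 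Compare with what the paper does in the analogous (but easier) situation of Lemma~\ref{320}: there the perturbation is deliberately asymmetric, $\hat\alpha=(1+2\lambda t)\alpha+\epsilon t[\theta]$ versus $\hat\beta=(1+t)\beta+\epsilon^2 t\min(1,t)[\theta]$, and, crucially, the verification for lower-dimensional $V'$ uses not a numerical inequality on $\Phi(V')$ but the \emph{pointwise subsolution} $\omega_W$ supplied by the induction hypothesis. Your plan offers no substitute for that pointwise input, and for subvarieties lying over the singular locus of $X$ there is, a priori, no such subsolution available on $X$.

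The analytic half is also not routine. The Guo--Phong--Tong estimates you invoke are proved for Hessian quotient equations with a fixed K\"ahler reference form; here the reference class $\pi^*\alpha$ degenerates along the exceptional divisor as $\epsilon\to 0$, and the constants in those estimates are not known to be uniform in this regime for the $J$-equation. Obtaining $\epsilon$-independent $L^\infty$ bounds in a degenerating big class --- the $J$-equation analogue of \cite{BEGZ} --- would itself be a new theorem. In short, your sketch is a reasonable line of attack, but both the positivity step on contracted subvarieties and the uniform potential estimates are genuine gaps rather than ``now-standard methods'', which is consistent with the paper leaving the statement open.
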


We can further extend the above conjecture to any $J$-nef pair $(\alpha, \beta)$ on $X$. We expect that the solution $\omega$ should be a  K\"ahler current with vanishing Lelong number similar to degenerate complex Monge-Ampere equations studied in \cite{BEGZ}.  A viscosity approach in \cite{DDT} might also be helpful to understand weak solutions of the degenerate global $J$-equation. We would like to remark that Theorem \ref{main1} and Definition \ref{jamp} can indeed be generalized for certain special degenerate case. For example, if $\beta$ is a semi-positive class (i.e., there exists a smooth closed semi-positive $(1,1)$-form in $\beta$) satisfying $\alpha^{n-1}\cdot \beta>0$, then  Theorem \ref{main1} should still hold.  

The more interesting  question is what canonical solutions one expects for the $J$-equation if the $J$-positive condition fails. The natural approach is to apply the $J$-flow, especially for projective manifolds with large symmetry, as suggested by the author and   investigated by Fang-Lai \cite{FL}. The interesting examples constructed in \cite{FL} suggest a natural analogy between the $J$-flow and the Yang-Mills flow in terms of formation of singularities.  Let $\omega \in \alpha$ and $\chi\in \beta$ be two K\"ahler forms on an $n$-dimensional compact K\"ahler manifold $X$. Then the $J$-flow is defined as below. 
\begin{equation}
\ddt{\varphi} = c- \frac{(\omega+\ddbar \varphi)^{n-1} \wedge \chi}{(\omega+ \ddbar \varphi)^n}, ~~\varphi(0) =0,
\end{equation}
where $c= \frac{\alpha^{n-1}\cdot \beta}{\alpha^n}$.  We conjecture that  the $J$-flow always converges smoothly outside an analytic subvariety $Z$ of $X$. The limiting solution $\omega_\infty$ on $X\setminus Z$ satisfies a $J$-equation 
$$\frac{\omega_\infty^{n-1}\wedge \chi}{ \omega_\infty^n }= c'>0,$$
where $c'$ is not necessarily equal to $c$ unless the pair $(\alpha, \beta)$ is $J$-nef. Furthermore, the limiting solution $\omega_\infty$ uniquely extends to a global K\"ahler current  on $X$. A suitable restriction of this K\"ahler current on each component of $Z$ would still satisfy a $J$-equation, possibly degenerate. Such a phenomena is possibly related to the slope stability through a  filtration of analytic subvarieties of $X$.

Finally, we extend the result of \cite{W2, SW1, JSS} for constant scalar curvature K\"ahler metrics on smooth minimal models.  

\begin{theorem} \label{maincsc} Let $X$ be an $n$-dimensional smooth minimal model, i.e., $X$ is a compact K\"ahler manifold with nef $K_X$. Let $\gamma$ be  a K\"ahler class and $\alpha(X, \gamma)$ be the $\alpha$-invariant (c.f. (\ref{alphainv})) for $\gamma$ on $X$. If there exists $\epsilon \in \left[0, \frac{n+1}{n} \alpha(X, \gamma)\right)$ such that   for any $m$-dimensional analytic subvariety $Z$ of $X$
\begin{equation}\label{slopecsc}
 m\left. \frac{ \gamma^{m-1}\cdot K_X }{  \gamma^m  } \right|_Z \leq  ( n + (n-m) \epsilon)\frac{ \gamma^{n-1}\cdot K_X } {    \gamma^n},  
 \end{equation}
 then there exists a unique cscK metric  (constant scalar curvature K\"ahler metric) in $\gamma$.

\end{theorem}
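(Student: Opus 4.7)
The plan is to combine the Nakai--Moishezon criterion of Theorem \ref{main1} (through Corollary \ref{main2}) with the continuity method for cscK metrics on smooth minimal models from \cite{W2, SW1, JSS}. I write $c=(\gamma^{n-1}\cdot K_X)/\gamma^n\geq 0$. The idea is to first upgrade the slope hypothesis (\ref{slopecsc}) to strict $J$-positivity for a nearby pair of K\"ahler classes, next invoke Corollary \ref{main2} to produce K\"ahler forms in $\gamma$ satisfying a pointwise $(n-1,n-1)$-positivity against a smooth semi-positive representative $\chi\in K_X$, and finally run the continuity method of \cite{JSS}, using the $\alpha$-invariant hypothesis to secure the $C^0$-estimate.

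For the first step, fix any $\delta>\epsilon c$. Since $K_X$ is nef, $K_X+\delta\gamma$ is a K\"ahler class. A direct expansion of (\ref{slopecsc}) yields, for every analytic subvariety $Z\subset X$ of dimension $1\leq m\leq n-1$,
$$m\frac{\gamma^{m-1}\cdot(K_X+\delta\gamma)}{\gamma^m}\bigg|_Z\leq n(c+\delta)+(n-m)(\epsilon c-\delta)<n(c+\delta),$$
so the pair $(\gamma, K_X+\delta\gamma)$ satisfies the strict slope hypothesis of Corollary \ref{main2}. Choosing $\chi_\delta=\chi+\delta\eta\in K_X+\delta\gamma$, where $\chi\in K_X$ is a fixed smooth semi-positive representative (available by nefness) and $\eta\in\gamma$ is any fixed K\"ahler form, Corollary \ref{main2} produces $\omega_\delta\in\gamma$ with
$$n(c+\delta)\,\omega_\delta^{n-1}-(n-1)\,\omega_\delta^{n-2}\wedge\chi_\delta>0$$
as an $(n-1,n-1)$-form.

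Next, I feed the family $\{\omega_\delta\}$ into the continuity method of \cite{W2, SW1, JSS} for cscK metrics on minimal models. This path deforms an auxiliary twisted cscK equation (whose openness and closedness are controlled by the $(n-1,n-1)$-positivity from Step~1) to the genuine cscK equation in $\gamma$. Openness is the standard implicit function theorem argument. Closedness reduces, via the Chen--Cheng a priori estimates, to a uniform $C^0$-bound on the K\"ahler potentials, which in turn follows from coercivity of the (twisted) Mabuchi $K$-energy along the path; this coercivity is exactly what Tian's threshold $\epsilon<\frac{n+1}{n}\alpha(X,\gamma)$ guarantees. Passing to the limit $\delta\downarrow\epsilon c$ and extracting a smooth subsequential limit then produces the desired cscK metric in $\gamma$; uniqueness is the Berman--Berndtsson/Chen--Cheng theorem.

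The main obstacle will be ensuring that the a priori estimates are uniform in $\delta$ as $\delta\to\epsilon c$, since the pointwise $(n-1,n-1)$-positivity from Step~1 degenerates in this limit. The role of the $\alpha$-invariant hypothesis is precisely to absorb this degeneration through a $\delta$-independent $C^0$-bound; once such a bound is in hand, the Chen--Cheng machinery (aided by the semi-positivity of $\chi\in K_X$ for the Ricci-direction control) delivers uniform higher order regularity, and the continuity method can be carried to the endpoint.
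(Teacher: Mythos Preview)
Your Step~1 computation showing $J$-positivity of $(\gamma, K_X+\delta\gamma)$ for every $\delta>\epsilon c$ is correct and is essentially the algebraic core of the paper's argument. However, two genuine problems follow. First, the claim that nefness of $K_X$ furnishes a smooth \emph{semi-positive} representative $\chi\in K_X$ is false in general; nef is strictly weaker than semi-positive. You use this both to make $\chi_\delta=\chi+\delta\eta$ K\"ahler for arbitrarily small $\delta$ and later for ``Ricci-direction control'', so the error is not cosmetic.

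Second, the entire limiting/continuity framework in $\delta$ is neither what \cite{JSS} does nor necessary, and as written it is not a proof: you never specify the twisted cscK path, and ``passing to the limit $\delta\downarrow\epsilon c$'' of a family $\omega_\delta$ produced by Corollary~\ref{main2} has no a priori compactness. The paper's proof is a direct one-shot application of Lemma~\ref{s8jss}. After normalizing $c=1$, set $\gamma_\epsilon=(1+\epsilon)\gamma$ and $\beta_\epsilon=K_X+\epsilon\gamma$; then $\gamma_\epsilon^n=\gamma_\epsilon^{n-1}\cdot\beta_\epsilon$, and (\ref{slopecsc}) translates exactly into $J$-positivity of $(\gamma_\epsilon,\beta_\epsilon)$. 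Corollary~\ref{main2} yields K\"ahler forms $\omega_\epsilon\in\gamma_\epsilon$ and $\chi_\epsilon\in\beta_\epsilon$ with $(n\omega_\epsilon-(n-1)\chi_\epsilon)\wedge\omega_\epsilon^{n-2}>0$. The key substitution you are missing is $\omega'=(1+\epsilon)^{-1}\omega_\epsilon\in\gamma$ and $\eta=\chi_\epsilon-\epsilon\omega'\in[K_X]$: then $\eta+\epsilon\omega'=\chi_\epsilon$ is \emph{automatically} K\"ahler (so no semi-positivity of $K_X$ is needed), and the positivity rewrites as $((n+\epsilon)\omega'-(n-1)\eta)\wedge(\omega')^{n-2}>0$. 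These are precisely the hypotheses of Lemma~\ref{s8jss}, which then gives properness of the $K$-energy and, via Chen--Cheng, the cscK metric. No family in $\delta$, no limit, and no uniform estimates are required.
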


Theorem \ref{maincsc} immediately implies the result of \cite{JSS, SD} on the existence of cscK metrics near the canonical class on a smooth minimal model because any sufficiently small perturbation of $K_X$ by a K\"ahler class will satisfy the assumption in Theorem \ref{maincsc}.

\begin{corollary} \label{maincsc2} Let $X$ be an $n$-dimensional smooth minimal model. Then for any K\"ahler class $\gamma$, there exists a cscK metric in $K_X + \epsilon \gamma$ for any sufficiently small $\epsilon>0$.

\end{corollary}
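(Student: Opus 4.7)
The plan is to apply Theorem \ref{maincsc} to the K\"ahler class $\gamma_\epsilon := K_X + \epsilon\gamma$, which is K\"ahler for every $\epsilon>0$ because $K_X$ is nef and $\gamma$ is K\"ahler. Since $\gamma_\epsilon$ is K\"ahler, $\alpha(X,\gamma_\epsilon)>0$, so it suffices to show that for all sufficiently small $\epsilon>0$ the slope hypothesis (\ref{slopecsc}), with $\gamma$ there replaced by $\gamma_\epsilon$, holds uniformly over every $m$-dimensional analytic subvariety $Z\subset X$, $1\le m\le n-1$, for some admissible $\epsilon'_\epsilon>0$.

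Writing $K_X=\gamma_\epsilon-\epsilon\gamma$ and setting $r_m(Z)=\frac{\gamma_\epsilon^{m-1}\cdot\gamma\cdot Z}{\gamma_\epsilon^m\cdot Z}$ and $r_n=\frac{\gamma_\epsilon^{n-1}\cdot\gamma}{\gamma_\epsilon^n}$, the slope condition becomes
$$
m\bigl(1-\epsilon\, r_m(Z)\bigr)\;\le\;\bigl(n+(n-m)\epsilon'_\epsilon\bigr)\bigl(1-\epsilon\, r_n\bigr).
$$
Nefness of $K_X$ implies $\epsilon\gamma\le\gamma_\epsilon$ as classes, so $\epsilon r_m(Z),\epsilon r_n\in[0,1]$. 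Expanding $\gamma_\epsilon^k=\sum_j\binom{k}{j}\epsilon^j K_X^{k-j}\gamma^j$, one recognises $m\epsilon\, r_m(Z)$ and $n\epsilon\, r_n$ as weighted averages of the integers $\{0,\ldots,m\}$ and $\{0,\ldots,n\}$ with nonnegative weights supported in $\{j\ge j_0(Z)\}$ and $\{j\ge j_0\}$ respectively, where $j_0(Z):=m-\nu(K_X|_Z)$ and $j_0:=n-\nu(K_X)$ with $\nu$ denoting the numerical dimension measured against $\gamma$. Hence $m\epsilon\, r_m(Z)\ge j_0(Z)$ for every $\epsilon>0$, and $n\epsilon\, r_n\to j_0$ as $\epsilon\downarrow 0$.

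Invoking the standard inequality $\nu(K_X|_Z)\le\nu(K_X)$ for restrictions of nef classes, we obtain the $Z$-uniform upper bound
$$
m\bigl(1-\epsilon\, r_m(Z)\bigr)\;\le\; m-j_0(Z)\;\le\; n-j_0.
$$
The right-hand side of the slope inequality is independent of $Z$ and converges to $\bigl(n+(n-m)\epsilon'_\epsilon\bigr)(n-j_0)/n$, which strictly exceeds $n-j_0$ for any $\epsilon'_\epsilon>0$ since $m<n$. Therefore, for every sufficiently small $\epsilon>0$, picking $\epsilon'_\epsilon\in\bigl(0,\tfrac{n+1}{n}\alpha(X,\gamma_\epsilon)\bigr)$ verifies (\ref{slopecsc}) uniformly in $Z$, and Theorem \ref{maincsc} produces the desired cscK metric in $K_X+\epsilon\gamma$.

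The main technical obstacle is the uniformity in $Z$: a priori there are infinitely many subvarieties and the ratios $r_m(Z)$ vary with $Z$. This is resolved by the one-sided lower bound $m\epsilon\, r_m(Z)\ge j_0(Z)$, which follows from the convex-combination reading of the binomial expansion and is valid for every $\epsilon>0$, together with the $Z$-free cohomological inequality $\nu(K_X|_Z)\le\nu(K_X)$. The strict slack needed to absorb the $O(\epsilon)$ error in $n\epsilon\, r_n\to j_0$ is then supplied by any positive choice of $\epsilon'_\epsilon$, which is admissible because $\gamma_\epsilon$ is K\"ahler and hence $\alpha(X,\gamma_\epsilon)>0$.
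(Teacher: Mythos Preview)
Your strategy---reduce to Theorem \ref{maincsc} applied to $\gamma_\epsilon=K_X+\epsilon\gamma$ and estimate both sides of the slope inequality via the binomial expansion---is the paper's. The substantive difference is in how the left side is bounded. The paper only uses the crude estimate $m(1-\epsilon r_m(Z))\le m$ (a weighted average of $\{0,\dots,m\}$ is at most $m$) and asserts that $n(1-\epsilon r_n)=n+O(\epsilon)$; in fact the right side equals $\nu(K_X)+O(\epsilon)$, as you compute, so the paper's bound is insufficient once $\nu(K_X)\le n-2$ (there are then dimensions $m>\nu(K_X)$). Your sharper bound $m(1-\epsilon r_m(Z))\le \nu(K_X|_Z)\le\nu(K_X)$ via the restriction inequality is exactly what is needed to make the argument go through for all $\nu$. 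That inequality is indeed standard; the paper itself proves and uses the $\nu=0$ case inside the proof of Theorem \ref{maincsc} by a Poincar\'e duality argument, and the same argument gives the general case.

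There is, however, a quantifier issue in your final step. You take a limit in $\epsilon$ with $\epsilon'_\epsilon$ held fixed (to get $\text{RHS}\to (n+(n-m)\epsilon'_\epsilon)(n-j_0)/n>n-j_0$), but then say ``picking $\epsilon'_\epsilon\in(0,\tfrac{n+1}{n}\alpha(X,\gamma_\epsilon))$'' as though any positive choice works for each $\epsilon$. It does not: for fixed $\epsilon>0$ one has $n(1-\epsilon r_n)<n-j_0$ strictly (the weight at $j=n$ is $\epsilon^n\gamma^n>0$), so $\epsilon'_\epsilon$ must be at least a specific positive quantity of order $\epsilon$, while the upper bound $\tfrac{n+1}{n}\alpha(X,\gamma_\epsilon)$ could in principle shrink with $\epsilon$. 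What is missing is the observation that $\alpha(X,\gamma_\epsilon)\ge\alpha(X,\gamma_1)>0$ for all $\epsilon\in(0,1]$: since $\gamma_1-\gamma_\epsilon=(1-\epsilon)\gamma$ is K\"ahler one may choose representatives with $\omega_\epsilon\le\omega_1$, whence $\PSH(X,\omega_\epsilon)\subset\PSH(X,\omega_1)$ and $\omega_\epsilon^n\le\omega_1^n$, giving the monotonicity. With this in hand you can fix $\epsilon'$ once (e.g.\ $\epsilon'=\tfrac12\cdot\tfrac{n+1}{n}\alpha(X,\gamma_1)$) and then take $\epsilon$ small; without it the argument does not close.
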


 Let us give a brief outline of the paper. We first give a direct proof of Conjecture \ref{conls} for dimension $3$ in \S2 by combining the work of Chen \cite{ChG} and solutions to degenerate complex Monge-Ampere equations.    In \S3, we will start the proof of Theorem \ref{main1} with basic set-up.  The main ingredients of the proof for Theorem \ref{main1} are established in \S 4, \S5 and \S6. We complete the proof for the main results of the paper in \S7. Finally, we prove Theorem \ref{maincsc} in  \S8.


\section{Lejmi-Szekelyhidi's conjecture for  $\dim =3$ }

In this section, we will directly prove Corollary \ref{main3} for dimension $3$ without using Theorem \ref{main1}. The proof will be different from the general case. Our approach here  is to solve degenerate $J$-equations with prescribed singularities on K\"ahler surfaces.  Then the argument of Chen (\cite{ChG}) can be directly extend to the case of $J$-positivity  from uniformly $J$-positivity and Lejmi-Szekelyhidi's original conjecture immediately follows. We rephrase Corollary \ref{main3} in the case of $\dim X=3$ as below. 

\begin{theorem} \label{s2nthmain} Let $X$ be a $3$-dimensional K\"ahler manifold and let $\alpha$ and $\beta$ be two K\"ahler classes on $X$ with $\alpha^2 \cdot \beta = \alpha^3$. If the pair $(\alpha, \beta)$ is $J$-positive (i.e., for any proper subvariety $Z$ of $X$,
$$(3\alpha - \beta)\cdot Z>0$$ if $\dim Z=1$ and 
$$ \left( 3 \alpha^2 - 2 \alpha \cdot \beta\right)\cdot Z>0$$ if $\dim Z=2$),
 then for any K\"ahler form $\chi\in \beta$, there exists a K\"ahler form $\omega\in \alpha$ satisfying the $J$-equation
\begin{equation}\label{j3d}
\omega^2\wedge \chi = \omega^3
\end{equation}
on $X$.

\end{theorem}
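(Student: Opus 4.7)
The plan is to reduce the theorem to Song--Weinkove's subsolution criterion~\cite{SW1}: it suffices to exhibit a smooth K\"ahler form $\omega_0\in\alpha$ satisfying $3\omega_0^2 - 2\omega_0\wedge\chi>0$ pointwise on $X$. By Chen's theorem~\cite{ChG} such a subsolution exists under \emph{uniform} $J$-positivity, so the heart of the matter is to bridge the gap from $J$-positivity---the strict inequalities $(3\alpha-\beta)\cdot C>0$ on curves and $(3\alpha^2-2\alpha\cdot\beta)\cdot S>0$ on surfaces---to the production of such a subsolution.

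Following the road-map of~\cite{ChG}, one produces the subsolution by patching together local potentials built from subsolutions on lower-dimensional subvarieties via Blocki--Kolodziej's local regularization; the uniform slope gap is what guarantees pointwise positivity after patching. When $(\alpha,\beta)$ is only $J$-positive, this gap can collapse on certain extremal $2$-dimensional subvarieties $S\subset X$ where $(3\alpha^2 - 2\alpha\cdot\beta)\cdot S$ is small relative to the curve-level slopes. To salvage the argument I would, on each such extremal $S$, solve a \emph{degenerate} $J$-equation on a resolution $\tilde S\to S$: produce a K\"ahler current $\omega_S$ in $\alpha|_{\tilde S}$ with bounded local potentials solving $\omega_S\wedge\chi|_{\tilde S} = c_S\,\omega_S^2$, where $c_S = (\alpha|_S\cdot\beta|_S)/(\alpha|_S^2)$. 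Since $c_S<3/2$ by $J$-positivity of $S$ inside $X$, the identity $3\omega_S^2-2c_S\omega_S^2=(3-2c_S)\omega_S^2>0$ then supplies the desired pointwise inequality on the smooth locus of $\tilde S$.

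The final step is to glue these surface-level solutions with Chen's uniform construction on the complement of a neighbourhood of the union of the extremal subvarieties, using a regularized maximum envelope in the style of Blocki--Kolodziej to patch. The hard part will be the construction of $\omega_S$ itself: Weinkove's two-dimensional theorem~\cite{W1} solves the $J$-equation on $\tilde S$ only when $2c_S\alpha|_{\tilde S} - \beta|_{\tilde S}$ is K\"ahler, a condition that does not follow automatically from $J$-positivity of $(\alpha,\beta)$ on $X$ and forces one to iterate within $S$, solving degenerate Monge--Amp\`ere-type equations in the spirit of~\cite{BEGZ} with prescribed singularities along the curves $C\subset S$ where the two-dimensional slope inequality degenerates. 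A secondary difficulty is ensuring that these singularities have zero Lelong number so that, when transferred to $X$ via cutoffs and summed with a large transverse positive form, they do not destroy the strict positivity $3\omega_0^2-2\omega_0\wedge\chi>0$ after the final regularization.
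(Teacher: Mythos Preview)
Your overall architecture---reduce to Song--Weinkove, build local subsolutions near every subvariety, and feed this into Chen's induction---matches the paper, but the way you propose to construct the surface-level subsolution misses the paper's key shortcut and gets the role of the singularities backwards.

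\textbf{The surface step is a Monge--Amp\`ere equation, not a $J$-equation.} On a $2$-dimensional $Z$ the paper never tries to solve $\omega_S\wedge\chi=c_S\,\omega_S^2$. Instead it uses the algebraic identity
\[
3\omega^2-2\omega\wedge\chi \;=\; \frac{(3\omega-\chi)^2-\chi^2}{3},
\]
so it suffices to find a K\"ahler form $\eta=3\omega-\chi$ in $3\alpha-\beta$ with $\eta^2=c\,\chi^2$ for some $c>1$. The curve hypothesis $(3\alpha-\beta)\cdot C>0$ together with $(3\alpha-\beta)^2\cdot Z=3(3\alpha^2-2\alpha\cdot\beta)\cdot Z+\beta^2\cdot Z>0$ forces $3\alpha-\beta$ to be K\"ahler on $Z$ (Demailly--Paun), and then one solves a \emph{standard} Monge--Amp\`ere equation on the resolution $\hat Z$ (Yau), after a small twist by the exceptional divisor $E$ so the class stays K\"ahler upstairs. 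The Weinkove obstruction $2c_S\alpha-\beta>0$ that you correctly flag as the hard point of your plan simply never appears; there is no need to iterate inside $S$ or to invoke~\cite{BEGZ}-type machinery.

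\textbf{You want positive Lelong number, not zero.} The twist by $\epsilon\log|\sigma_E|^2_{h_E}$ deliberately gives the surface potential $\varphi_{\hat Z}$ \emph{positive} Lelong number along the exceptional locus $\cS_{\hat Z}$. This is what makes the gluing work: by Lemma~\ref{s2l21} one already has a smooth subsolution $\omega_V=\omega_0+\ddbar\varphi_V$ near $\cS_Z$; since $\varphi_{\hat Z}\to-\infty$ along $\cS_{\hat Z}$ while $\varphi_V$ is bounded, the regularized maximum of an extension of $\varphi_{\hat Z}$ and $\varphi_V$ agrees with $\varphi_V$ near $\cS_Z$ and with $\varphi_{\hat Z}$ away from it. Your proposal to force zero Lelong number would destroy exactly this mechanism. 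The transverse positivity you worry about is handled not by Lelong-number control but by adding $A\,d^2$ (distance to $Z$) to the extension, which makes $\omega$ large in directions normal to $Z$.
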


Our goal is to construct a subsolution of the $J$-equation (\ref{j3d})  in an open neighborhood of any subvariety $Z$ of $X$.   Let  $\omega_0\in \alpha$ and $\chi\in \beta$ be two fixed K\"ahler forms on $X$. 

\begin{lemma}\label{s2l21} Let $Z$ be an analytic subvariety of $X$ with $\dim Z \leq 1$. Then there exists an open neighborhood $U$ of $Z$ and a smooth K\"ahler metric $\omega_U \in [\omega]|_U$ such that
$$3 (\omega_U)^2 - 2 \omega_U\wedge \chi>0$$
in $U$.

\end{lemma}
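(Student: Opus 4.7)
The plan is to establish the lemma by an explicit local construction that treats $\dim Z = 0$ and $\dim Z = 1$ separately, both resting on the following elementary pointwise reformulation of the target inequality. At any point, diagonalize $\omega_U$ and $\chi$ simultaneously with eigenvalues $\lambda_1,\lambda_2,\lambda_3$ and $\mu_1,\mu_2,\mu_3$ respectively, and set $\eta_i = \sqrt{-1}\, dz_i\wedge d\overline{z}_i$. A direct expansion gives
$$3\omega_U^2 - 2\,\omega_U \wedge \chi \;=\; \sum_{i<j}\Bigl(6\lambda_i\lambda_j - 2\bigl(\lambda_i\mu_j + \lambda_j\mu_i\bigr)\Bigr)\, \eta_i\wedge\eta_j,$$
so strict positivity as a $(2,2)$-form is equivalent to the three scalar inequalities $\mu_i/\lambda_i + \mu_j/\lambda_j < 3$ for all $i \neq j$. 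In particular, enlarging $\omega_U$ relative to $\chi$ in any direction drives the corresponding ratio $\mu/\lambda$ to $0$.

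If $\dim Z = 0$, write $Z=\{p_1,\ldots,p_r\}$. Around each $p_i$ pick local coordinates $z^{(i)}$ in a coordinate ball $B_i$ and a smooth cutoff $\zeta_i$ equal to $1$ on a smaller ball and supported in $B_i$. For $A>0$ set
$$\omega_U := \omega_0 + A\,\ddbar\!\Bigl(\sum_{i}\zeta_i\,\|z^{(i)}\|^2\Bigr)$$
on a small neighborhood $U \subset \bigcup_i B_i$ of $Z$. For $A$ large enough and $U$ small enough, $\omega_U$ is smooth and K\"ahler, lies in $\alpha|_U$ by construction, and all three eigenvalues $\lambda_i$ on $U$ are arbitrarily large; the pointwise inequalities are then immediate.

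Now assume $\dim Z = 1$, with $Z = Z_1 \cup \cdots \cup Z_\ell$ the decomposition into irreducible one-dimensional components; isolated points are absorbed into the previous case. The $J$-positivity hypothesis gives $(3\alpha-\beta)\cdot Z_j > 0$ for each $j$, so the pullback $\nu_j^*(\alpha - \tfrac13\beta)$ to the normalization $\nu_j \colon \tilde Z_j \to Z_j$ has positive degree on the compact Riemann surface $\tilde Z_j$ and is therefore K\"ahler. Choose a smooth K\"ahler form $\eta_j$ on $\tilde Z_j$ in the class $\nu_j^*\alpha|_{\tilde Z_j}$ with $\eta_j > \nu_j^*(\chi|_{Z_j})/3$ pointwise. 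I then construct $\omega_U = \omega_0 + \ddbar \psi$ using a two-scale choice of $\psi$. Near a smooth point of $Z_j$, in adapted coordinates $(z_1,z_2,z_3)$ with $Z_j = \{z_2 = z_3 = 0\}$, set $\psi = h_j(z_1) + A(|z_2|^2 + |z_3|^2)$, where $h_j$ is adjusted so that $(\omega_0 + \ddbar h_j)|_{Z_j}$ matches $\eta_j$ tangentially and $A$ is very large. The two transverse eigenvalues $\lambda_2, \lambda_3$ are then of order $A$ and so $\mu_2/\lambda_2, \mu_3/\lambda_3$ are negligible, while the tangential eigenvalue $\lambda_1$ satisfies $\mu_1/\lambda_1 < 3$ by construction. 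Patching these local models via a partition of unity and a Richberg-type max-regularization (which preserves the open, convex subsolution condition) and handling singular points of $Z$ and mutual intersections of distinct components by dominating with the dimension $0$ construction above yields the required global $\omega_U$ on a neighborhood of $Z$.

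The crux, and main obstacle, is the gluing step when $\dim Z = 1$: the tangential models $\eta_j$ prescribed on different components must be combined into a single smooth form in $\alpha|_U$, compatible across charts and near singularities. The freedom to enlarge the normal scale $A$ is essential here, since a larger $A$ absorbs discrepancies between overlapping local models, while at singular and crossing points of $Z$ one dominates locally by the uniformly large K\"ahler form coming from the dimension $0$ construction, which automatically makes all ratios $\mu/\lambda$ small.
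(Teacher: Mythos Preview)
Your overall strategy matches the paper's: trivialise the $\dim Z=0$ case by making $\omega_U$ large, and in the $\dim Z=1$ case use the $J$-positivity on each curve to control the single tangential ratio $\mu_1/\lambda_1<3$, then force the two normal ratios to be negligible by adding $A d^2$ with $A\gg 0$. The gap is entirely in the gluing, and it is a real one, not just a matter of detail.

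First, a partition of unity does \emph{not} preserve the subsolution condition. Writing $\psi=\sum\rho_\alpha\psi_\alpha$, the form $\omega_0+\ddbar\psi$ differs from the convex combination $\sum\rho_\alpha(\omega_0+\ddbar\psi_\alpha)$ by error terms $\psi_\alpha\ddbar\rho_\alpha+2\Re(\partial\rho_\alpha\wedge\dbar\psi_\alpha)$ of size $O(1)$. These errors hit the tangential eigenvalue $\lambda_1$, which is itself $O(1)$, and can push $\mu_1/\lambda_1$ past $3$; enlarging the normal scale $A$ does nothing for this direction, so your claim that ``larger $A$ absorbs discrepancies'' only covers the two normal ratios. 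Second, the regularized maximum \emph{does} preserve the condition, but to invoke it near a singular point $p$ you must arrange that the curve potential is strictly below the dim-$0$ potential on a small ball and strictly above it on a surrounding annulus. A short computation shows this is impossible when the curve potential is bounded near $p$: if $\varphi_p=A'|z|^2+C$ and the curve potential lies in $[h_{\min},h_{\max}]$ on the branch, the two required inequalities force $h_{\max}<C$ and $C+A'r^2<h_{\min}$, hence $h_{\max}<h_{\min}$.

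The paper resolves both points with specific devices you are missing. For the chart-patching along the smooth locus it invokes the Demailly--Paun extension theorem to produce in one stroke a single K\"ahler form $\omega_Z\in\alpha|_{U_Z}$ on a full neighbourhood of $Z$ with $3\omega_Z>(1+2\epsilon)\chi$; there is then only one potential $\varphi$ (plus a global $A d^2$) and no chart gluing at all. For the singular points it deliberately builds $\varphi$ to have positive Lelong number at $\cS_Z$, so that the curve potential tends to $-\infty$ there; the regularized maximum with the bounded dim-$0$ potential $\varphi_p$ then automatically equals $\varphi_p$ on a small ball and equals the curve potential outside. If you want to salvage your more hands-on route, the minimal repair is: write $\eta_j=\nu_j^*\omega_0|_{Z_j}+\ddbar h_j$ for a single global $h_j$ on the normalization, extend $h_j$ smoothly to one tubular neighbourhood of $Z_j^{\mathrm{reg}}$ (no charts), add a term with positive Lelong number along $\cS_Z$, and only then apply the regularized maximum against the dim-$0$ model at each singular point.
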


\begin{proof} If $\dim Z=0$, the lemma is obvious since every K\"ahler class in a sufficiently small neighborhood of a point is trivial.

If $\dim Z=1$, any two K\"ahler classes on $Z$ are proportional to each other and by the extension theorem (c.f. Proposition 3.3 \cite{DP}), there exist a neighborhood $U_Z$ of $Z$ and a K\"ahler form $\omega_Z\in \alpha|_{U_Z}$ such that
$$3\omega_Z - (1+2\epsilon) \chi>0$$
for some $\epsilon>0$.
Let $\cS_Z$ be the singular set of $Z$. Then there exist a sufficiently small $\delta>0$ and $\varphi \in C^\infty(U_Z \setminus \cS_Z)\cap \PSH(U_Z, \omega)$ such that
$$3(\omega_Z +\ddbar \varphi) - (1+\epsilon)\chi>0$$
and $\varphi$ has Lelong number greater than $\delta$ at $\cS_Z$. 

For simplicity, we assume $\cS_Z$ is a single point $p$ of $Z$ and components of $Z^\circ=Z\setminus \{p\}$ are smooth open curves in a neighborhood $U_p$ of $p$ in $X$. We assume there exists $\varphi_p \in C^\infty(U_p)$ with 
$$3(\omega_p)^2 > 2\omega_p\wedge \chi, ~\omega_p =  \omega_Z+ \ddbar \varphi_p $$
where $U_p$ is a sufficiently small neighborhood of $p$. By subtracting a sufficiently large number from $\varphi_p$, we can assume
$$\varphi_p < \varphi-2$$
on $U_p \setminus V_p$ for some sufficiently small open neighborhood $V_p\subset\subset U_p$ of $p$. Since $\varphi \rightarrow -\infty$ at $p$,  there exists a neighborhood $W_p \subset\subset V_p$ of $p$ such that
$$\varphi_p > \varphi +2$$
in $W_p$. Let $Z_1, Z_2, ..., Z_k$ be all the components of $Z$ in $U_p$. 
We replace $\varphi$ in $U_Z \setminus W_p$ by 
$$\tilde\varphi= \varphi+ A d^2,$$
where $d$ is the distance function to $Z_1, ..., Z_k$ with respect to $\omega_Z$. For sufficiently large $A>0$, $\tilde\omega_Z= \omega_Z+\ddbar \tilde\varphi$ is a K\"ahler form and 
$$3\tilde\omega^2 > (2+\epsilon) \tilde\omega \wedge \chi$$
in some sufficiently small open neighborhood $\tilde U$ of $(Z_1 \cup...\cup Z_k) \cap (U_p \setminus W_p)$.  We can take the regularized maximum of $\tilde\varphi$ and $\varphi_p$ in $ \tilde U$ and denote it by $\varphi_Z$. The lemma is then proved by choosing $\omega_U= \omega+ \ddbar \varphi_Z$ for some sufficiently small open neighborhood $U$ of $Z$. 

\end{proof}

Now let $Z$ be an analytic subvariety of $X$ with $\dim Z=2$. By the assumption of the  $J$-positive condition, 
\begin{equation}\label{surf1}
3\alpha - \beta>0
\end{equation}
is a K\"ahler class on $Z$ and 
\begin{equation}\label{surf2}
 (3\alpha^2 - 2\alpha \cdot \beta)\cdot Z'>0 
 \end{equation}
for any irreducible component $ Z'$ of $Z$. 
Let $Z'$ be an irreducible component of $Z$ and $\cS_Z$ be the singular set of $Z$.
Let $\Phi: X' \rightarrow X$ be the resolution of singularities for $Z$. We can assume the exceptional locus of $\Phi$ is a union of smooth curves and it coincides with $\Phi^{-1}(\cS_Z)$. We also let $\hat Z$ be the strict transform of $Z'$ by $\Phi$.

 For conveniences, we identify $\Phi^*\omega_0$ and $\Phi^*\chi$ with $\omega_0$ and $\chi$. We also identify $\Phi^*\alpha$ and $\Phi^*\beta$ with $\alpha$ and $\beta$.

 There exists an effective $\mathbb{Q}$-divisor $E$ whose support coincides with the exceptional locus of $\Phi$ such that $\alpha- [E]$ is a K\"ahler class on $X'$. Let $\sigma_E$ and $h_E$ be the defining section and hermitian metric on the line bundle associated to $[E]$ such that for any sufficiently small $\epsilon>0$
$$3\omega_0 -\chi + \epsilon \ddbar \log h_E$$
is K\"ahler on $\hat Z$. 
There exists  $\varphi_\epsilon \in C^\infty(\hat Z)$ such that
$$( 3\omega_0 -\chi +  \epsilon \ddbar \log h_E + 3\ddbar \varphi)^2 =  c_\epsilon\chi^2,$$
where the constant $c_\epsilon>0$ is given by 
$$c_\epsilon \beta^2= \left( (3\alpha- \beta) - \epsilon [E] \right)^2.$$
 By choosing sufficiently small $\epsilon>0$, we can assume that
$$c_\epsilon> 1$$
since 
$$(3\alpha - \beta)^2 -\beta^2= 3(3\alpha^2 - 2\beta\cdot \alpha)>0$$
 on $\hat Z$.
By letting 
\begin{equation}\label{ssf1}
\varphi_{\hat Z}=\varphi_\epsilon+ \log |\sigma_E|^2_{h_E}, ~\omega_{\hat Z}= \omega_0+ \ddbar \varphi_{\hat Z},
\end{equation} 
we have on $\hat Z \setminus \Phi^{-1}(\cS_Z)$, 
\begin{equation}\label{ssf2}
3\omega_{\hat Z}^2 -2 \omega_{\hat Z}\wedge \chi =\frac{(3\omega_{\hat Z} - \chi)^2 -  \chi^2}{3}= \frac{c_\epsilon-1}{3} \chi^2>0.
\end{equation}
Furthermore, $\omega_{\hat Z}$ has positive Lelong number along $E$.

\begin{lemma} \label{s2l22} Suppose $Z$ is an analytic subvariety of $X$ of  dimension $2$. Then there exists an open neighborhood $U$ of $Z$ and a smooth K\"ahler metric $\omega_U \in \alpha|_U$ such that
$$3 (\omega_U)^2 - 2 \omega_U\wedge \chi>0$$
on $U$.

\end{lemma}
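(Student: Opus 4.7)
The plan is to patch two types of local subsolutions via the Blocki-Kolodziej regularized maximum trick (as in \cite{BK, ChG}): one coming from the potential $\varphi_{\hat Z}$ constructed in (\ref{ssf1}) pushed down from the resolution, the other coming from Lemma \ref{s2l21} applied to the singular locus $\cS_Z$ of $Z$, which has dimension at most one.

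First, for each irreducible component $Z_i'$ of $Z$, let $\hat Z_i \subset X'$ be its strict transform and carry out the construction (\ref{ssf1})-(\ref{ssf2}) to obtain $\omega_{\hat Z_i} = \omega_0 + \ddbar \varphi_i$, smooth on $\hat Z_i \setminus \Phi^{-1}(\cS_Z)$, satisfying $3\omega_{\hat Z_i}^2 > 2\omega_{\hat Z_i} \wedge \chi$ there and with positive Lelong number along $E$. Next, I would extend the $\omega_0$-psh function $\varphi_i$ from the smooth hypersurface $\hat Z_i$ to a quasi-psh function $\tilde \varphi_i$ on a tubular neighborhood in $X'$, using defining functions of $\hat Z_i$ and a correction of the form $A d_i^2$ with $d_i$ the distance to $\hat Z_i$ and $A \gg 1$. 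For $A$ large enough the form $\tilde\omega_i := \omega_0 + \ddbar \tilde \varphi_i$ is K\"ahler in a neighborhood $\tilde W_i$ of $\hat Z_i$ in $X'$, and by continuity and (\ref{ssf2}) the strict inequality $3\tilde\omega_i^2 > 2\tilde\omega_i \wedge \chi$ survives on a (smaller) neighborhood of $\hat Z_i \setminus \Phi^{-1}(\cS_Z)$.

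Since $\Phi$ is biholomorphic off the exceptional locus, pushing forward gives a smooth $\omega_0$-psh function $\psi_i := \Phi_*\tilde\varphi_i$ in a neighborhood of $Z_i' \setminus \cS_Z$ in $X$, with $3\omega_0 + \ddbar\psi_i$ K\"ahler and satisfying the subsolution inequality there, and with $\psi_i \to -\infty$ along $\cS_Z \cap Z_i'$ by the Lelong number estimate. Separately, since $\dim \cS_Z \leq 1$, Lemma \ref{s2l21} produces a smooth $\omega_0$-psh function $\psi_{\cS}$ on a neighborhood $U_\cS$ of $\cS_Z$ in $X$ with $3(\omega_0+\ddbar \psi_\cS)^2 > 2(\omega_0+\ddbar\psi_\cS)\wedge \chi$. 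After subtracting a sufficiently large constant from $\psi_\cS$, one arranges that $\psi_\cS > \max_i \psi_i$ on a thin tubular shell around $\cS_Z$ and $\psi_\cS < \psi_i$ outside a slightly larger shell; this is possible precisely because $\psi_i$ has a $-\infty$ pole along $\cS_Z \cap Z_i'$. The regularized maximum of $\{\psi_\cS, \psi_1, \dots, \psi_k\}$, taken with small gluing parameter, then yields a globally smooth $\omega_0$-psh $\psi$ defined on an open neighborhood $U$ of $Z$, coinciding with $\psi_\cS$ near $\cS_Z$ and with the appropriate $\psi_i$ on the regular parts of each component. Setting $\omega_U = \omega_0 + \ddbar \psi$ gives the K\"ahler form in $\alpha|_U$ with the required subsolution property.

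The main obstacle is the extend-and-descend step: one must produce a qpsh extension $\tilde\varphi_i$ of $\varphi_i$ that preserves the strict inequality (\ref{ssf2}) on a genuine neighborhood of $\hat Z_i$ in $X'$ (not merely distributionally), and the push-forward must retain the prescribed singularities along $\Phi(E) \supset \cS_Z \cap Z_i'$ so that the regularized-maximum patching in the last step succeeds. The construction (\ref{ssf1}) engineered to put positive Lelong number exactly along $E$ is designed precisely to make this patching possible, but carrying out the extension with uniform control of $A$ across different components of $\cS_Z$, and ensuring no incompatibility where several components of $Z$ meet, is the delicate part.
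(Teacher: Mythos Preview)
Your proposal is correct and follows essentially the same route as the paper: construct the singular subsolution $\varphi_{\hat Z}$ on the strict transform via (\ref{ssf1})--(\ref{ssf2}), extend it to a tubular neighborhood using the $A d^2$ trick, invoke Lemma \ref{s2l21} on the at-most-one-dimensional singular locus $\cS_Z$, and glue the two by a regularized maximum, using the positive Lelong number along $E$ to force the correct ordering of potentials near and away from $\cS_Z$. The paper's own proof is terser but identical in structure; your cautionary remark about the extension step is well placed, since the paper simply refers back to the argument of Lemma \ref{s2l21} for that point.
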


\begin{proof}     Let $Z_1$, ..., $Z_k$ be  the irreducible components of $Z\setminus \cS_Z$. For simplicity, we assume $k=1$ and $\cS_Z$ is connected.  
By Lemma \ref{s2l21}, there exists an open neighborhood $V$ of $\cS_Z$ and $\omega_V=\omega+\ddbar \varphi_V \in \alpha |_V$ for some $\varphi_V\in C^\infty(V)\cap \PSH(V, \omega)$ such that  $\omega_V> \chi$ in $V$ and so 
$$3 (\omega_V)^2 - 2\omega_V\wedge \chi >0.$$

We will then extend $\varphi_{\hat Z}$ given in (\ref{ssf1})  using the square of the distance function as in the proof of Lemma \ref{s2l21}. The rest of the proof is similar to that of Lemma \ref{s2l21}. We let $\varphi_Z$ be   the regularized maximum of $\varphi_V$ and  the extension of $\varphi_{\hat Z} $. The lemma is proved by letting $\omega_Z =\omega+\ddbar \Phi(\varphi_Z)$ since $\omega_Z$ coincides with $\omega_V$ in some open neighborhood of $\cS_Z$.

\end{proof}

Now we can directly apply the result of Chen \cite{ChG} to prove Theorem \ref{s2nthmain}. The induction argument in \cite{ChG} is based on the assumption of Lemma \ref{s2l21} and Lemma \ref{s2l22} if the subvariety $Z$ is smooth, where the uniform $J$-positivity is used instead of the $J$-positivity as in Definition \ref{jamp}. As long as Lemma \ref{s2l21} and Lemma \ref{s2l22} hold, the argument in section 4 of \cite{ChG} immediately implies Theorem \ref{s2nthmain}.

\section{Set-up}

In this section, we will lay out the proof of Theorem \ref{main1}. For conveniences, we will focus on the case when $X$ is irreducible. The proof can be easily generalized for reducible situations and we will explain in \S 7.    The rest of this section together with  \S 4,  \S 5,  \S 6 and \S 7 are devoted to the proof of the following theorem. 

\begin{theorem}\label{s3main}   Let $X$ be an irreducible $n$-dimensional compact analytic subvariety of an ambient K\"ahler manifold $\cM$. Let $\alpha$ and $\beta$ be two K\"ahler classes on $X$  satisfying
\begin{equation}\label{s3cond1} 
\frac{  \alpha^{n-1}\cdot \beta \cdot X }{ \alpha^n \cdot X }     \leq 1.
\end{equation}
Suppose for any $m$-dimensional subvariety $Z$ of $X$ with $1\leq m \leq n-1$,
\begin{equation}\label{s3cond2}
\left( n \alpha^m - m \alpha^{m-1} \cdot \beta \right)\cdot Z >0. 
\end{equation}
Then for any smooth K\"ahler form $\chi\in \beta$ in an open neighborhood of $X$,    there exists a smooth K\"ahler form $\omega\in \alpha$ such that in some open neighborhood of $X \subset \cM$, we have 
\begin{equation}\label{s3cond3}
n \omega^{n-1} - (n-1) \omega^{n-2}\wedge \chi >0. 
\end{equation}

\end{theorem}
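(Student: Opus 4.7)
The plan is to prove Theorem~\ref{s3main} by induction on $n=\dim X$, adapting the strategy of Chen \cite{ChG} but replacing his induction over \emph{smooth} subvarieties by an argument that tolerates singular lower-dimensional strata. The base case $n=1$ is vacuous, and for the inductive step I assume the theorem in all dimensions strictly less than $n$.

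\emph{Step~1: local subsolutions near a proper subvariety.} For any proper analytic subvariety $Z\subsetneq X$ and any irreducible component $Z_j$ of dimension $m_j<n$, the rescaled pair $(\alpha|_{Z_j},(m_j/n)\beta|_{Z_j})$ satisfies both the normalization (\ref{s3cond1}) and the $J$-positivity (\ref{s3cond2}) on $Z_j$; both follow directly from (\ref{s3cond2}) on $X$. The inductive hypothesis applied with $\chi_j=(m_j/n)\chi$ furnishes a smooth K\"ahler form $\omega_j\in\alpha$ on a neighborhood of $Z_j$ in $\cM$ with the $m_j$-dimensional subsolution inequality $m_j\omega_j^{m_j-1}>(m_j-1)\omega_j^{m_j-2}\wedge\chi_j$. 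As in the proofs of Lemmas~\ref{s2l21}--\ref{s2l22}, adding $Ad_{Z_j}^2$ (with $A$ large) to the potential of $\omega_j$ produces enough positivity transverse to $Z_j$ to upgrade this to the full $n$-dimensional inequality $n\omega_j^{n-1}-(n-1)\omega_j^{n-2}\wedge\chi>0$ on a thin tubular neighborhood; taking a regularized maximum over components then yields a smooth K\"ahler form $\omega_Z\in\alpha$ on an open neighborhood $U_Z\subset\cM$ of $Z$ satisfying (\ref{s3cond3}) on $U_Z$.

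\emph{Step~2: global subsolution off a bad subvariety via a degenerate $J$-equation.} To produce a subsolution on a neighborhood of all of $X$, I would use a continuity-method perturbation, solving for each $t>0$ the $J$-equation
\begin{equation*}
\omega_t^{n-1}\wedge(\chi+t\omega_0)=c_t\,\omega_t^n,\qquad \omega_t=\omega_0+\ddbar\varphi_t\in\alpha,
\end{equation*}
with $\omega_0\in\alpha$ a fixed background K\"ahler form and $c_t$ the cohomological constant. For $t>0$ the pair $(\alpha,\beta+t\alpha)$ is uniformly $J$-positive on $X$ (the slope gains a margin of $(n-m)t$ on each $m$-dimensional subvariety), so an ambient adaptation of Chen's theorem \cite{ChG}, applied on a resolution if $X$ is singular, yields a smooth solution $\omega_t$, which is automatically a subsolution. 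The normalization (\ref{s3cond1}) is exactly what keeps $c_t>0$ in the limit, and, combined with the inductive hypothesis applied to obstruction subvarieties, produces uniform $L^1$ bounds on the potentials $\varphi_t$. A pluripotential-theoretic limit $t\searrow0$ then yields a K\"ahler current $\omega_\infty\in\alpha$ with locally bounded potentials, smooth and satisfying the subsolution inequality off a proper analytic subvariety $\Sigma\subset X$, with at worst analytic singularities along $\Sigma$.

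\emph{Step~3: gluing, and the main obstacle.} Applying Step~1 to $Z=\Sigma$ yields a smooth local subsolution $\omega_\Sigma$ on a neighborhood of $\Sigma$, and the Blocki-Kolodziej regularized-maximum trick glues $\omega_\Sigma$ and $\omega_\infty$ into a single smooth K\"ahler form $\omega\in\alpha$ on a neighborhood of $X\subset\cM$ satisfying (\ref{s3cond3}): a suitable constant shift of the potential of $\omega_\Sigma$ makes it dominate that of $\omega_\infty$ in a small neighborhood of $\Sigma$ and be dominated off $\Sigma$, and the regularized maximum on the overlap produces the required $\omega$. The decisive obstacle is that, without Chen's uniform $J$-positivity, the subsolution margin collapses along $\Sigma$ and the usual regularized-maximum gluing is no longer automatic; my workaround is exactly Step~2's degenerate construction, at the price of analytic singularities for $\omega_\infty$. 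Establishing enough regularity of $\omega_\infty$ transverse to a possibly singular $\Sigma$ for the Blocki-Kolodziej gluing to produce a smooth output is the technically most delicate task, and constitutes the crux of the argument to be developed in \S4--\S6.
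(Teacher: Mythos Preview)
Your overall architecture (induction, local subsolutions near subvarieties, Blocki--Kolodziej gluing) matches the paper's, but both the inductive framing in Step~1 and the construction in Step~2 have genuine gaps.

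In Step~1 you induct on $\dim X$ and apply the theorem to each component $Z_j$ with the rescaled class $(m_j/n)\beta$. This yields the $m_j$-dimensional inequality $m_j\omega_j^{m_j-1}>(m_j-1)\omega_j^{m_j-2}\wedge(m_j/n)\chi$, which says that the sum of any $m_j-1$ eigenvalues of $\omega_j^{-1}\chi$ is $<n$. But the target $n$-dimensional condition asks that the sum of any $n-1$ eigenvalues be $<n$; adding $Ad^2$ kills only the normal eigenvalues, so on an $(n-1)$-plane containing $T_pZ_j$ the trace is at least $\sum_{i=1}^{m_j}\mu_i$, which your hypothesis does not control (e.g.\ $m_j=2$, $n=3$, $\mu_1=\mu_2=2$). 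The paper avoids this by inducting instead on $m=\dim Z$ \emph{within a fixed $X$}, and the inductive statement is always the full $n$-dimensional inequality (\ref{s3cond3}) near $Z$; the passage from dimension $m-1$ to $m$ is exactly the content of \S4--\S6.

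The more serious gap is Step~2. Solving the perturbed $J$-equation on $X$ for $t>0$ and letting $t\searrow0$ does not, without further input, produce a K\"ahler \emph{current} with positive Lelong number along an analytic set: nothing in your setup forces a uniform lower bound $\omega_t\geq\epsilon\theta$, the weak limit need not have analytic singularities, and if $X$ is singular you cannot even get started (on a resolution $\alpha$ becomes only semi-positive). The paper's mechanism is completely different and is the heart of the proof: for each $m$-dimensional $Z$, resolve to smooth $\hat Z$, pass to the product $\cZ=\hat Z\times\hat Z$, and solve a \emph{twisted} $J$-equation on $\cZ$ with a source $(\chi_{\cZ,s})^{2m}$ concentrating on the diagonal as $s\to0$ (the Demailly--Paun mass-concentration trick). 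The pushforward to $\hat Z$ then has two properties simultaneously: positive Lelong number along the exceptional locus (from the diagonal mass, Lemma~\ref{s5mass}) and $\cP_\chi(\Omega^{(r)})<n-\varepsilon$ after local regularization (from convexity of $\cP$, Lemmas~\ref{s3con1}--\ref{s3con2} and \ref{s5l55}--\ref{s5l56}). It is precisely this dual control that makes the gluing in Step~3 possible, and your direct-limit approach supplies neither half.
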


The following lemma is obvious by direct calculations. 

\begin{lemma}\label{s3connec} Suppose $(\alpha, \beta)$ are a pair of K\"ahler classes on $X$ satisfying the assumptions   (\ref{s3cond1}) and (\ref{s3cond2}) in Theorem \ref{s3main}.  Let $\alpha(t)=(1+t)\alpha$ for  $t\geq 0$. Then the pair $(\alpha(t), \beta)$ also satisfy  (\ref{s3cond1}) and (\ref{s3cond2}). Furthermore, for any smooth K\"ahler form $\chi\in \beta$, there exists sufficiently large $T>0$ and a K\"ahler form $\omega(T)\in \alpha(T)$ satisfying (\ref{s3cond3}). 

\end{lemma}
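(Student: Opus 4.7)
The plan is to handle the two assertions of the lemma separately, and both turn out to be elementary.

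For the first assertion, that $(\alpha(t),\beta)$ also satisfies (\ref{s3cond1}) and (\ref{s3cond2}), I would simply exploit the scaling. The ratio in (\ref{s3cond1}) for the pair $(\alpha(t),\beta)$ becomes
$$
\frac{\alpha(t)^{n-1}\cdot \beta \cdot X}{\alpha(t)^n \cdot X} = \frac{1}{1+t}\cdot \frac{\alpha^{n-1}\cdot \beta \cdot X}{\alpha^n \cdot X} \leq \frac{1}{1+t} \leq 1.
$$
For (\ref{s3cond2}), I would write the algebraic identity
$$
n\alpha(t)^m - m\,\alpha(t)^{m-1}\cdot \beta = (1+t)^{m-1}\Bigl[\bigl(n\alpha^m - m\,\alpha^{m-1}\cdot \beta\bigr) + nt\,\alpha^m\Bigr],
$$
and intersect with any $m$-dimensional subvariety $Z$. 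The first term in the bracket is positive by hypothesis, and the second is nonnegative since $\alpha$ is Kähler (so $\alpha^m\cdot Z>0$), giving the desired strict positivity for all $t\geq 0$.

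For the second assertion, my approach is to take the naive candidate: fix any smooth Kähler form $\omega_0\in \alpha$ in an open neighborhood of $X$ in $\cM$, and set $\omega(T):=(1+T)\omega_0 \in \alpha(T)$. Then
$$
n\,\omega(T)^{n-1} - (n-1)\,\omega(T)^{n-2}\wedge \chi = (1+T)^{n-2}\bigl[n(1+T)\omega_0^{n-1} - (n-1)\omega_0^{n-2}\wedge \chi\bigr],
$$
so it suffices to show pointwise positivity of the bracketed $(n-1,n-1)$-form for all sufficiently large $T$ on a compact neighborhood of $X$. At each point, I would simultaneously diagonalize $\omega_0$ and $\chi$, writing $\omega_0 = \sum_k \eta_k$ and $\chi = \sum_k \mu_k \eta_k$ with $\eta_k = \sqrt{-1}\,dz^k\wedge d\bar z^k$ and $\mu_k>0$. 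A direct expansion gives
$$
n(1+T)\omega_0^{n-1} - (n-1)\omega_0^{n-2}\wedge \chi = (n-1)!\sum_{|S|=n-1}\Bigl(n(1+T) - \sum_{k\in S}\mu_k\Bigr)\eta_S,
$$
where $\eta_S=\prod_{k\in S}\eta_k$. Positivity as an $(n-1,n-1)$-form is therefore equivalent to the scalar inequalities $n(1+T)>\sum_{k\in S}\mu_k$ for every $(n-1)$-subset $S\subset \{1,\dots,n\}$ and every point in the neighborhood. Since $\chi$ is smooth and the eigenvalues $\mu_k$ of $\chi$ with respect to $\omega_0$ are uniformly bounded on a compact neighborhood of $X$, this holds for all sufficiently large $T$.

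There is essentially no serious obstacle here: the content of the lemma is that inflating the first class by a large scalar reduces (\ref{s3cond3}) to a trivially satisfied pointwise inequality on eigenvalues. The only mild care needed is to ensure that $\omega_0$ extends as a Kähler form to an open neighborhood of $X$ in $\cM$ (so that the positivity of (\ref{s3cond3}) can be verified on a neighborhood rather than merely on $X$), which is immediate from the hypothesis in Theorem \ref{s3main} that $\alpha$ is a Kähler class in an ambient open neighborhood of $X$.
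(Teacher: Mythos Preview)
Your proposal is correct and is exactly the direct calculation the paper has in mind; the paper itself gives no proof beyond the remark that the lemma is ``obvious by direct calculations.'' Your scaling identities for (\ref{s3cond1}) and (\ref{s3cond2}) and the eigenvalue computation for (\ref{s3cond3}) with $\omega(T)=(1+T)\omega_0$ are precisely the intended verifications.
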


Lemma \ref{s3connec}  implies that for any K\"ahler class $\beta$ of $X$, the set $\cK_\beta$ defined on $X$ by
$$\cK_\beta=\{ \alpha~|~\alpha  \textnormal{~ is K\"ahler and } (\alpha, \beta)~\textnormal{satisfies (\ref{s3cond1}) and (\ref{s3cond2}) }\}$$
is connected.

Before we start our proof, we will first introduce some notations and elementary lemmas. The following notion introduced in \cite{ChG} can be used to verify a subsolution for the $J$-equation.

\begin{definition} For any $N\times N$ positive definitive hermitian matrix $A$ and $B$, 
\begin{equation}
\cP_B(A) = \max_{V \subset \mathbb{C}^N}   \tr_{A|_V}(B|_V) ,
\end{equation}
where $V$ is any hyperplane of $\mathbb{C}^N$. 

\end{definition}

It is obvious $\cP_B(A)$ is invariant under unitary transformation and so  we can always make $B$ an diagonal or identity matrix. If $B$ is the identity matrix and the eigenvalues of $A$ are given by $\lambda_1$, ..., $\lambda_N$, then
$$\cP_B (A) = \max_{k=1, ..., N} \left(  \sum_{j=1, j\neq k}^N \lambda_j^{-1}\right). $$
  
\begin{lemma}\label{s3con1} Let  $A$,  $B$ and $C$ be $N\times N$ positive definitive hermitian matrices. Then for any $t\in [0,1]$, we have
$$\cP_C (tA+(1-t)B) \leq  t \cP_C (A )  + (1-t) \cP_C ( B) . $$

\end{lemma}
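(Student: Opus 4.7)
The plan is to exhibit $\cP_C$ as a pointwise supremum of convex functions, from which the claimed convexity follows automatically. For each fixed hyperplane $V \subset \mathbb{C}^N$ I would introduce
$$f_V(M) = \tr_{M|_V}(C|_V) = \tr\bigl((M|_V)^{-1}(C|_V)\bigr),$$
so that by the very definition of $\cP_C$ we have $\cP_C(M) = \sup_V f_V(M)$ with $V$ ranging over all hyperplanes. Since a supremum of convex functions is again convex, the lemma reduces to showing that each $f_V$ is convex in $M$ on the cone of positive definite hermitian matrices.

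Next, I would observe that the restriction $M \mapsto M|_V$ is linear in $M$ and sends positive definite matrices to positive definite operators on $V$. After fixing an identification $V \simeq \mathbb{C}^{N-1}$, the problem reduces to the classical fact that, for any positive semidefinite hermitian matrix $N$, the map $M \mapsto \tr(M^{-1} N)$ is convex on the cone of positive definite $M$. To prove this fact I would use the variational identity
$$v^* M^{-1} v = \sup_{w}\bigl\{\,2\,\mathrm{Re}(v^* w) - w^* M w\,\bigr\},$$
obtained by maximizing the quadratic form in $w$ (the optimum is attained at $w = M^{-1}v$). This realizes $M \mapsto v^* M^{-1} v$ as a supremum of affine functions of $M$, hence as a convex function of $M$. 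Writing $N = \sum_j \lambda_j v_j v_j^*$ spectrally with $\lambda_j \geq 0$ then gives
$$\tr(M^{-1} N) = \sum_j \lambda_j\, v_j^* M^{-1} v_j,$$
a nonnegative linear combination of convex functions, which is convex.

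Assembling the pieces, for any $t \in [0,1]$ and any hyperplane $V$,
$$f_V(tA + (1-t)B) \leq t f_V(A) + (1-t) f_V(B) \leq t \cP_C(A) + (1-t) \cP_C(B),$$
and taking the supremum over $V$ on the left yields the stated inequality. There is no real obstacle to overcome here: the entire argument is purely linear-algebraic and essentially standard. The only subtlety worth underlining is that $M \mapsto M|_V$ is linear (not merely affine) in $M$, which is what allows the convexity of $\tr((\,\cdot\,)^{-1} N)$ to be pulled back to convexity of $f_V(M)$.
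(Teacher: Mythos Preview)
Your proof is correct. Both you and the paper reduce to the convexity of $M \mapsto \tr(M^{-1}N)$ on positive definite matrices, so the approaches are essentially the same in spirit; the difference is only in how that convexity is established. The paper simultaneously diagonalizes $A$ and $B$ (via congruence, which preserves $\cP_C$) and then invokes the scalar convexity of $x \mapsto x^{-1}$ on the eigenvalues, whereas you use the variational identity $v^* M^{-1} v = \sup_w\{2\,\mathrm{Re}(v^*w) - w^*Mw\}$ to exhibit each $f_V$ directly as a supremum of affine functions. Your route is arguably cleaner: it avoids the bookkeeping of choosing a basis in which $A$, $B$, and the maximizing hyperplane $W$ are all simultaneously in convenient form, and it makes transparent that the convexity holds for \emph{every} hyperplane $V$ before taking the supremum, rather than only analyzing the maximizer.
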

\begin{proof} We can assume $A$ is identity matrix and $B= diag(\lambda_1, ..., \lambda_N)$. Let $W$ be the hyperplane of $\mathbb{C}^N$ maximizing $\cP_C(tA+ (1-t)B)$. Let $\mu_1, ..., \mu_N$ be the diagonal elements of $C$. We can assume $W$ is spanned by $e_1, ..., e_{N-1}$ and 
$$\cP_C (tA+ (1-t)B ) = \sum_{j=1}^{N-1} \mu_j (t + (1-t)\lambda_j)^{-1}.$$
By convexity of the function $x^{-1}$, we have 
\begin{eqnarray*}
\cP_C (tA+ (1-t)B ) &= &\sum_{j=1}^{N-1} \mu_j (t + (1-t)\lambda_j)^{-1}\\
&\leq & \sum_{j=1}^{N-1} \mu_j \left(t  + (1-t)(\lambda_j)^{-1}\right)\\
&=&t \cP_C(A) + (1-t) \cP_C(B).
\end{eqnarray*}

\end{proof}

Lemma \ref{s3con1} shows that $\cP_B(A)$ is convex in $B$. We can now extend the definition $\cP_B(A)$ to 
$$\cP_\chi( \omega )$$
for any two K\"ahler forms $\omega$ and $\chi$ on a K\"ahler manifold. The following lemma is due to Chen (\cite{ChG} Lemma 3.5).

\begin{lemma} \label{s3con2} Suppose $A$, $B$ and $C$ are $N\times N$ complex-valued matrices such that 
$\begin{bmatrix}
A & C \\
\overline{C}^T & B
\end{bmatrix}
$ is a positive definite hermitian matrix. 
Then
$$\cP_{I_N} (A - CB^{-1} \overline{C}^T) + \tr_B (I_N) \leq \cP_{I_{2N}} \left( \begin{bmatrix}
A & C \\
\overline{C}^T & B
\end{bmatrix}\right),  $$
where $I_N$ and $I_{2N}$ are identity matrices of rank $N$ and $2N$ respectively.
\end{lemma}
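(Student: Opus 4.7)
The plan is to reduce the inequality to two elementary comparisons, one for traces of inverses and one for maximal eigenvalues. First I would make both sides explicit using the formula for $\cP_I$ displayed just above the lemma. Since $\cP_{I_k}$ is unitarily invariant, diagonalizing a positive definite hermitian $M$ with eigenvalues $\nu_1\le\cdots\le\nu_k$ gives
\[
\cP_{I_k}(M)=\max_j\sum_{i\neq j}\nu_i^{-1}=\sum_i\nu_i^{-1}-\nu_k^{-1}=\tr(M^{-1})-\lambda_{\max}(M)^{-1}.
\]
Combined with $\tr_B(I_N)=\tr(B^{-1})$, the target inequality becomes, writing $S:=A-CB^{-1}\overline{C}^{T}$ and $M=\begin{bmatrix}A & C\\ \overline C^{T} & B\end{bmatrix}$,
\[
\tr(S^{-1})+\tr(B^{-1})-\lambda_{\max}(S)^{-1}\;\le\;\tr(M^{-1})-\lambda_{\max}(M)^{-1}.
\]

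The next step is to handle the trace portion via block inversion. The standard formula gives
\[
M^{-1}=\begin{bmatrix}S^{-1} & -S^{-1}CB^{-1}\\ -B^{-1}\overline{C}^{T}S^{-1} & B^{-1}+B^{-1}\overline{C}^{T}S^{-1}CB^{-1}\end{bmatrix},
\]
so $\tr(M^{-1})=\tr(S^{-1})+\tr(B^{-1})+\tr\!\left(B^{-1}\overline{C}^{T}S^{-1}CB^{-1}\right)$. Since $S^{-1}\succ 0$, the last summand is the trace of a positive semidefinite matrix, hence nonnegative, and therefore $\tr(M^{-1})\ge\tr(S^{-1})+\tr(B^{-1})$.

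To compare the maximal-eigenvalue terms, note that $CB^{-1}\overline{C}^{T}\succeq 0$ yields $S\preceq A$ in the Loewner order, whence $\lambda_{\max}(S)\le\lambda_{\max}(A)$. Since $A$ is a principal $N\times N$ submatrix of $M$, Cauchy interlacing gives $\lambda_{\max}(A)\le\lambda_{\max}(M)$. Combining, $\lambda_{\max}(S)\le\lambda_{\max}(M)$, so $-\lambda_{\max}(M)^{-1}\ge-\lambda_{\max}(S)^{-1}$. Adding this to the trace inequality from the previous step proves the claim.

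I do not anticipate any serious obstacle: the argument is a short matrix computation combining the Schur-complement expression for $M^{-1}$ with Cauchy interlacing and Loewner monotonicity. The one subtlety is bookkeeping the signs in the $\lambda_{\max}$ comparison, which is why the monotonicity $S\preceq A$ is chained with interlacing for the principal submatrix $A$ rather than applied to $M$ directly.
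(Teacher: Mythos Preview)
Your argument is correct. The paper itself does not prove this lemma: it simply attributes the result to Chen \cite{ChG} (Lemma~3.5) and states it without proof. Your proof is a clean, self-contained matrix computation: writing $\cP_{I_k}(M)=\tr(M^{-1})-\lambda_{\max}(M)^{-1}$, using the Schur-complement block-inverse formula to obtain $\tr(M^{-1})\ge\tr(S^{-1})+\tr(B^{-1})$, and chaining $S\preceq A$ (Loewner) with Cauchy interlacing for the principal submatrix $A$ to get $\lambda_{\max}(S)\le\lambda_{\max}(M)$. All steps are valid; in particular, positive definiteness of $M$ guarantees that the Schur complement $S$ is positive definite, so $S^{-1}$ and $\lambda_{\max}(S)^{-1}$ are well defined.
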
 

The following lemma shows that the operator $\cP$ preserves the upper bound by taking the maximum for plurisubharmonic functions. 
 \begin{lemma} \label{s3l34} Let $u$ and $v$ be two smooth plurisubharmonic functions on an open domain $U\subset \C^n$ such that
 $$P_{\omega_{\C^n}} (\ddbar u) <c, ~P_{\omega_{\C^n}} (\ddbar v) <c$$
 for some $c>0$, where $\omega_{\C^n}$ is the standard Euclidean metric on $\C^n$. Then 
 $$\cP_{\omega_{\C^n}} (\ddbar \max(u, v)) <c$$
 in the sense that
 $$nc \left(\ddbar \max(u, v) \right)^{n-1} > (n-1) \left(\ddbar \max(u, v) \right)^{n-2} \wedge \omega_{\C^n}$$
 as currents.

 \end{lemma}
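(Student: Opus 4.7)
The plan is to approximate $\max(u,v)$ by a smooth PSH function, prove the desired inequality pointwise for the smoothing, and then pass to the limit as currents via Bedford--Taylor continuity. The smoothing device is Demailly's regularized maximum
\[
M_\ep(u,v)(p) = \int_{\R^2}\max\bigl(u(p)+s_1,\,v(p)+s_2\bigr)\,\rho(s_1/\ep,s_2/\ep)\,\frac{ds_1\,ds_2}{\ep^2},
\]
where $\rho\geq 0$ is smooth, compactly supported, symmetric, with total mass $1$. Standard properties: $M_\ep(u,v)$ is smooth and PSH whenever $u,v$ are smooth PSH; $M_\ep(u,v)\to \max(u,v)$ locally uniformly as $\ep\to 0^+$; and, as a function $(t_1,t_2)\mapsto M_\ep(t_1,t_2)$, it is smooth, jointly convex, non-decreasing in each variable, with $\partial_{t_1}M_\ep+\partial_{t_2}M_\ep\equiv 1$.

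The first step is to establish the pointwise bound $\cP_{\omega_{\C^n}}(\ddbar M_\ep(u,v))<c$ on $U$. Direct differentiation gives
\[
\ddbar M_\ep(u,v) = (\partial_{t_1}M_\ep)\,\ddbar u + (\partial_{t_2}M_\ep)\,\ddbar v + Q,
\]
with
\[
Q = (\partial^2_{t_1t_1}M_\ep)\,\partial u\wedge\dbar u + (\partial^2_{t_1t_2}M_\ep)(\partial u\wedge\dbar v+\partial v\wedge\dbar u) + (\partial^2_{t_2t_2}M_\ep)\,\partial v\wedge\dbar v,
\]
which is a positive $(1,1)$-form because the real Hessian of $M_\ep$ is positive semidefinite. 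I would then isolate a simple monotonicity fact: $\cP_B(\cdot)$ is decreasing in its first argument, since if $A_1\geq A_2>0$ then $(A_1|_V)^{-1}\leq (A_2|_V)^{-1}$ on every hyperplane $V$, hence $\tr_{A_1|_V}(B|_V)\leq \tr_{A_2|_V}(B|_V)$ and the maximum over $V$ decreases. Using this to drop $Q$, and then applying Lemma \ref{s3con1} with pointwise weights $\lambda(p) := \partial_{t_1}M_\ep(u(p),v(p))\in[0,1]$ and $1-\lambda(p)$, one obtains at every $p\in U$
\[
\cP_{\omega_{\C^n}}(\ddbar M_\ep(u,v))(p)\leq \lambda(p)\,\cP_{\omega_{\C^n}}(\ddbar u)(p) + (1-\lambda(p))\,\cP_{\omega_{\C^n}}(\ddbar v)(p) < c.
\]
In the form-inequality interpretation used in the statement of the lemma, this is the pointwise strict $(n-1,n-1)$ inequality
\[
nc\,(\ddbar M_\ep(u,v))^{n-1} > (n-1)\,(\ddbar M_\ep(u,v))^{n-2}\wedge\omega_{\C^n}.
\]

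To conclude, I would let $\ep\to 0^+$. Since $M_\ep(u,v)\to \max(u,v)$ locally uniformly and the limit is bounded PSH, the Bedford--Taylor continuity theorem yields weak convergence of positive currents $(\ddbar M_\ep(u,v))^k \to (\ddbar \max(u,v))^k$ for $k=n-1$ and $k=n-2$. Pairing the smooth inequality against an arbitrary non-negative smooth test $(1,1)$-form of compact support and passing to the limit preserves the inequality, which gives the claim at the current level.

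The main obstacle is aligning the pointwise algebraic step with the current-level conclusion. Lemma \ref{s3con1} applies only to genuine convex combinations of the arguments of $\cP_B$, so both the positivity of the cross-term $Q$ and the monotonicity of $\cP_B$ in its first argument are essential in order to reduce the inequality for $\ddbar M_\ep(u,v)$ to an inequality for the convex combination $\lambda(p)\ddbar u+(1-\lambda(p))\ddbar v$. Once this local reduction is in hand, the Bedford--Taylor limit is routine, and the strict pointwise bound passes to positivity of the difference of currents.
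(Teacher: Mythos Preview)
Your argument is correct and takes a genuinely different route from the paper's. The paper works directly with $\max(u,v)$: it invokes the pluripotential inequality
\[
\ddbar\max(u,v)\ \geq\ \chi_{\{u\geq v\}}\,\ddbar u + \chi_{\{u<v\}}\,\ddbar v \ =:\ \eta
\]
as currents, observes that the bounded hermitian form $\eta$ satisfies $\cP_{\omega_{\C^n}}(\eta)<c$ pointwise, and then mollifies by convolution, using convexity of $\cP$ under the averaging integral to get $\cP_{\omega_{\C^n}}(\omega^{(r)})\leq(\cP_{\omega_{\C^n}}(\eta))^{(r)}<c-\varepsilon$ on compact subsets before letting $r\to 0$. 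You instead smooth \emph{before} anything else via the regularized maximum $M_\ep$, compute its complex Hessian explicitly, and use monotonicity of $\cP_B(\cdot)$ in its first slot together with Lemma~\ref{s3con1} to obtain the pointwise bound, finally passing to the limit by Bedford--Taylor.

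Each approach has its advantages. The paper's mollification argument extends verbatim to merely continuous $u,v$ (as the paper remarks), since the current inequality above does not require smoothness; your computation of $\ddbar M_\ep(u,v)$ by the chain rule does use smoothness of $u,v$. On the other hand, your route is self-contained and avoids appealing to the pluripotential inequality for $\ddbar\max$, and it yields the bound for the regularized maximum \emph{first} --- which is in fact what the paper actually uses later (see the remark after the lemma and the constructions in \S6). One small point you leave implicit: to retain \emph{strict} positivity of the difference of currents after the Bedford--Taylor limit, you should note that on any compact $V\subset\subset U$ one has a uniform margin $\cP_{\omega_{\C^n}}(\ddbar u),\cP_{\omega_{\C^n}}(\ddbar v)<c-\varepsilon$, hence $\cP_{\omega_{\C^n}}(\ddbar M_\ep(u,v))<c-\varepsilon$ for all $\ep$, and it is this $\varepsilon$ that survives the weak limit.
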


\begin{proof} Let $\omega=\ddbar \max(u, v)$ be a positive closed current.  Then
$$\omega=\ddbar \max(u, v) \geq \chi_{ \{u\geq v\}} \ddbar u + \chi_{\{u<v\}} \ddbar  v$$
in the sense of currents (c.f. \cite{Kol2}). Since $u$ and $v$ are smooth, we can define $\eta =\chi_{ \{u\geq v\}} \ddbar u + \chi_{\{u<v\}} \ddbar  v$ to be a bounded hermitian form. Then $\cP_{\omega_{\C^n}}(\eta) <c$. The local regularization of a positive closed current $\Theta$ is defined  by 
$$
\Theta^{(r)}(p) = \int_{B_r(0)} r^{-2n} \rho\left(\frac{|y|}{r}\right)  \Theta(p-y) d\textnormal{Vol}_{\C^n} (y)%
$$
where $r>0$ and  $\rho(t)$ is a smooth non-negative function with support in $[0,1]$ satisfying
$$\int_{B_1(0)}   \rho(|y|) d\textnormal{Vol}_{\C^n}(y)=1. $$ For anyt $V\subset\subset U$, there exists $\varepsilon>0$ such that  in $V$,
$$\cP_{\omega_{\C^n}}(\omega^{(r)}) \leq  \left( \cP_{\omega_{\C^n}} (\eta) \right)^{(r)} < c-\varepsilon$$
by convexity of the operator $\cP$,
or equivalently, 
$$n(c-\varepsilon)((\omega^{(r)} )^{n-1} > (n-1) (\omega^{(r)})^{n-2} \wedge \omega_{\C^n}, $$
for any sufficiently small $r>0$. The lemma is proved by letting $r\rightarrow 0$. 

\end{proof}

Lemma \ref{s3l34} also holds if the plurisubharmonic functions $u$ and $v$ are continuous. It also immediately implies that $\cP_{\omega_{\C^n}}(\ddbar\tilde\max(u, v))< c$, where $\tilde\max(u, v)$ is the regularized maximum of $u$ and $v$.

We will prove Theorem \ref{s3main} by induction on dimension $m=\max_i \dim Z_i$ of the subvariety $Z$ of $X$, where $Z_i$ are components of $Z$. 

\begin{lemma}\label{s3lowd} Under the assumption of Theorem \ref{s3main},  for any subvariety $Z$ of $X$ with $\dim Z\leq 1$, there exist an open neighborhood $U$ of $Z$ in $\cM$ and  a K\"ahler form $\omega_{Z} \in \alpha|_{U}$ such that in $U$, 
$$n (\omega_{Z})^{n-1} - (n-1) (\omega_{Z})^{n-2}\wedge \chi >0. $$

\end{lemma}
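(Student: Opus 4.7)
The plan is to extend the proof of Lemma \ref{s2l21} from dimension $3$ to arbitrary $n$. The guiding algebraic observation is that $n\omega^{n-1} - (n-1)\omega^{n-2}\wedge\chi > 0$ as an $(n-1,n-1)$-form is equivalent to $\cP_\chi(\omega) < n$: restricting $n\omega^{n-1} - (n-1)\omega^{n-2}\wedge\chi$ to any complex hyperplane $V$ gives $\bigl(n - \tr_{\omega|_V}(\chi|_V)\bigr)(\omega|_V)^{n-1}$, and the maximum of $\tr_{\omega|_V}(\chi|_V)$ over hyperplanes is exactly $\cP_\chi(\omega)$. In terms of the eigenvalues $\lambda_1,\ldots,\lambda_n$ of $\omega$ with respect to $\chi$, this reads $\max_k \sum_{j\neq k}\lambda_j^{-1} < n$. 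The decisive consequence is that inflating $n-1$ of the eigenvalues via a distance-squared correction in the normal directions reduces the condition to a bound of the form $\lambda_1 > 1/n$ on the remaining tangent eigenvalue, which is precisely what hypothesis (\ref{s3cond2}) with $m = 1$ furnishes on a $1$-dimensional subvariety.

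If $\dim Z = 0$, one may assume $Z = \{p\}$ is a single point. On a Stein neighborhood $U_p \subset \cM$ the class $\alpha$ is trivial, so $\omega_0 = \ddbar \phi_0$ on $U_p$; replacing $\phi_0$ by $\phi_0 + A|z|^2$ in a local chart yields $\omega_p \in \alpha|_{U_p}$ with arbitrarily large eigenvalues, and $\cP_\chi(\omega_p) < n$ for $A \gg 0$. If $\dim Z = 1$, decompose $Z = \bigcup_i Z_i$ into irreducible components with singular locus $\cS_Z$, which is zero-dimensional. Then $(n\alpha - \beta)\cdot Z_i > 0$ by hypothesis, so on the normalization of each $Z_i$ (a compact Riemann surface) the class $n\alpha - (1+3\epsilon)\beta$ is positive for some $\epsilon > 0$. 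The Demailly-Paun extension theorem (Proposition 3.3 of \cite{DP}) then produces a neighborhood $U_Z$ of $Z$ and a function $\varphi \in C^\infty(U_Z\setminus\cS_Z) \cap \PSH(U_Z,\omega_0)$ of strictly positive Lelong number at each point of $\cS_Z$, such that
$$
n(\omega_0 + \ddbar \varphi) - (1+2\epsilon)\chi > 0
$$
on a tubular neighborhood of $Z^\circ := Z \setminus \cS_Z$. Adding $A d^2$, where $d$ is the $\omega_0$-distance to $Z^\circ$, pushes the normal eigenvalues of $\omega_0 + \ddbar \varphi + A\ddbar d^2$ to order $A$ while the tangential eigenvalue along $Z^\circ$ remains above $(1+2\epsilon)/n > 1/n$; taking $A$ sufficiently large forces $\cP_\chi < n$ on a tubular neighborhood $\tilde U$ of $Z^\circ$.

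At each $q \in \cS_Z$, the $\dim = 0$ construction produces $\omega_q \in \alpha|_{V_q}$ with $\omega_q \gg \chi$ and hence $\cP_\chi(\omega_q) < n$ automatically. Because $\varphi$ has $-\infty$ singularity at $q$ while the potential of $\omega_q$ is bounded, after an additive normalization the potential of $\omega_q$ dominates $\varphi + Ad^2$ on a small neighborhood of $q$ and is dominated on a slightly larger neighborhood. The local pieces are then glued via the regularized maximum of potentials; by the convexity of $\cP$ in its arguments (Lemma \ref{s3con1}) combined with Lemma \ref{s3l34}, the resulting smooth K\"ahler form $\omega_Z \in \alpha|_U$ on a neighborhood $U$ of $Z$ still satisfies $\cP_\chi(\omega_Z) < n$, which is the desired inequality. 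The main technical obstacle is precisely this regularized-maximum gluing at $\cS_Z$: one has to track the potentials and Lelong contributions carefully enough to ensure that the transition region lies strictly inside the set where each candidate already satisfies $\cP_\chi < n$, so that the convexity bound persists through the max; everything else is routine algebraic manipulation together with standard extension-theorem technology.
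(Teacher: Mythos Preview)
Your proposal is correct and follows essentially the same route as the paper, which simply declares the proof ``identical to that of Lemma \ref{s2l21}'': extend a potential realizing $n\omega - (1+2\epsilon)\chi > 0$ near the curve via Demailly--Paun, insert a positive Lelong-number singularity at $\cS_Z$, thicken in the normal directions with $A d^2$ to force $\cP_\chi < n$, and glue against the local $\dim = 0$ patches at $\cS_Z$ by regularized maximum. Your added eigenvalue bookkeeping and the explicit appeal to Lemmas \ref{s3con1} and \ref{s3l34} for the gluing are exactly the details one needs to pass from $n=3$ to general $n$; the only cosmetic difference is that the paper separates the Demailly--Paun extension (producing a smooth $\omega_Z$) from the subsequent insertion of the Lelong singularity, whereas you fold these into a single step.
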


\begin{proof} The proof is identical to that of Lemma \ref{s2l21}.

\end{proof}

\section{The twisted $J$-equation}

We first state the following  PDE theorem due to Chen (Theorem 1.14 \cite{ChG}) for the twisted $J$-equation. 

\begin{theorem} \label{chg} Let $M$ be an $n$-dimensional compact K\"ahler manifold equipped with two K\"ahler forms $\omega_0$ and $\chi$. Suppose $c\in \mathbb{R}^+$ and  $F\in C^\infty(M)$ satisfy 
$$ F> - \frac{1}{2n^{n+1} c^{n-1}}, ~ \int_M F \chi^n= c \int_M \omega_0 ^n -  \int_M \chi \wedge \omega_0^{n-1}  >0.$$
If 
$$nc\omega_0^{n-1} - (n-1) \chi\wedge \omega_0^{n-2} >0,$$
then there exists a unique K\"ahler form $\omega \in \omega_0$ satisfying the twisted $J$-equation
\begin{equation}\label{twJ}
c\omega^n = \chi \wedge \omega^{n-1} + F\chi^n
\end{equation}
and the inequality
$$nc\omega^{n-1} - (n-1)\chi \wedge \omega^{n-2}>0.$$

\end{theorem}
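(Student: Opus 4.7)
The plan is to solve the twisted $J$-equation by a continuity method, in the spirit of Song-Weinkove's original approach in \cite{SW1}. I would define $F_0 \in C^\infty(M)$ pointwise by $c\omega_0^n = \chi\wedge\omega_0^{n-1} + F_0\,\chi^n$, so that $\omega_0$ itself solves the twisted equation at $t=0$, and interpolate $F_t = (1-t)F_0 + tF$ for $t \in [0,1]$. The integral hypothesis forces $\int_M F_t \chi^n = c\int_M \omega_0^n - \int_M \chi\wedge\omega_0^{n-1}$ for every $t$, which is the compatibility condition for seeking a solution of the form $\omega_t = \omega_0 + \ddbar \varphi_t$ in the same K\"ahler class. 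Let $T \subset [0,1]$ denote the set of $t$ for which a smooth $\omega_t$ exists together with the subsolution inequality $nc\omega_t^{n-1} > (n-1)\chi\wedge\omega_t^{n-2}$; the goal is to show $T = [0,1]$.

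Openness of $T$ is a standard application of the implicit function theorem in H\"older spaces. The linearization of the twisted equation at $\omega_t$ is a second-order operator whose principal symbol is governed by the form $nc\omega_t^{n-1} - (n-1)\chi\wedge\omega_t^{n-2}$, so it is uniformly elliptic precisely when the subsolution condition holds, and becomes an isomorphism after quotienting out constants. Closedness requires uniform $C^{2,\alpha}$ bounds on $\varphi_t$. For the $C^0$ bound, I would combine an Alexandrov-Bakelman-Pucci or Moser iteration estimate with the lower bound on $F$ to absorb the extra forcing $F_t \chi^n$. For the $C^2$ bound, I would apply the maximum principle to a quantity of the form $\log \tr_\chi \omega_t - A\varphi_t$ and use the subsolution inequality to produce positive commutator terms that dominate the error, including the $F_t$ contribution. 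Higher regularity then follows from Evans-Krylov, since the equation is concave in the eigenvalues of $\omega$ relative to $\chi$ after an appropriate parametrization.

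The remaining point is to propagate the open subsolution inequality along the continuity path. I would do this by a maximum-principle argument on $\cP_\chi(\omega_t)$, using the convexity established in Lemma \ref{s3con1} together with one differentiation of the equation. The hardest step, and the one I expect to be the main obstacle, is the $C^2$ estimate together with the preservation of the subsolution condition in the presence of the twisting $F\chi^n$. The very precise threshold $F > -1/(2 n^{n+1} c^{n-1})$ strongly suggests a tight calculation in which the positive margin supplied by the subsolution inequality is just barely enough to dominate the negative contribution of $F$ at this boundary value. Choosing the correct test function, and arranging the constants so that the same subsolution inequality emerges as a conclusion at $t=1$, is where the real work of the argument will concentrate; once these estimates are in hand, the rest is routine.
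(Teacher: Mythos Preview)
The paper does not prove Theorem~\ref{chg}: it is quoted verbatim as Theorem~1.14 of Chen~\cite{ChG} and then used as a black box throughout \S4 and \S5. There is therefore no ``paper's own proof'' to compare your proposal against; the result is imported, not established here.

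That said, your outline is broadly the standard continuity-method architecture one would expect for such a result, and is consistent with how \cite{SW1} and \cite{ChG} approach this family of equations. One point you should be careful about: along your path $F_t=(1-t)F_0+tF$, the function $F_0=(c\omega_0^n-\chi\wedge\omega_0^{n-1})/\chi^n$ is determined by the initial data, and the subsolution inequality $nc\omega_0^{n-1}>(n-1)\chi\wedge\omega_0^{n-2}$ does \emph{not} by itself force $F_0>-\frac{1}{2n^{n+1}c^{n-1}}$. In eigenvalue coordinates with $\chi=I$ one has $F_0=\big(\prod_j\lambda_j\big)\big(c-\sum_j\lambda_j^{-1}\big)$, which can be very negative when $\sum_j\lambda_j^{-1}$ exceeds $c$ while $\prod_j\lambda_j$ is large. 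So your interpolated $F_t$ need not satisfy the quantitative lower bound that the theorem hypothesizes for $F$, and you would have to check that whatever use you make of that bound (in the $C^0$ estimate and in propagating the cone condition) actually goes through for $F_t$. If it does not, a different continuity path---for instance deforming the background metric rather than the forcing term, or rescaling so that the starting point already lies well inside the admissible range---may be needed.
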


Equation (\ref{twJ}) coincides with the $J$-equation if we choose $F=0$ and $c=1$ with normalization $\alpha^n = \alpha^{n-1}\cdot \beta$. The advantage of Theorem (\ref{twJ}) is that  $F$ is allowed to be negative.

We continue our proof of Theorem \ref{s3main} by induction of dimension. We will fix the K\"ahler metric 
\begin{equation}
\chi\in \beta, ~\omega_0\in \alpha.
\end{equation}
in some open neighborhood of $X$ from the assumption of Theorem \ref{s3main}. Without loss of generality, we can assume that
\begin{equation}\label{kah}
\lambda^{-1}\chi < \omega_0 < \lambda \chi
\end{equation}
for some sufficiently large $\lambda \geq 1$. 

By induction, we can assume that for any subvariety $Z$ of $X$ with  $ \dim Z \leq m-1<n-1$, there exists a smooth K\"ahler form $\omega_Z \in \alpha$ such that in some open neighborhood of $Z$, we have 
$$ n (\omega_Z)^{n-1} - (n-1) (\omega_Z)^{n-2}\wedge \chi >0$$
or
$$\cP_\chi (\omega_Z) <n . $$

Now we let $Z$ be an $m$-dimensional analytic subariety of $X$. We apply the resolution of singularities for  $Z$ by 
$$\Phi: \cM' \rightarrow \cM$$
such that $Z'$, the strict transform of all $m$-dimensional components $Z$ by $\Phi$, is a disjoint union of smooth $m$-dimensional submanifolds $\cM'$. In fact, we can choose $\Phi$ to be successive blow-ups along smooth centers.  For conveniences, we still use $\alpha$ and $\beta$ for $\Phi^*\alpha$ and $\Phi^*\beta$. Obviously,  the pair $(\alpha, \beta)$ is $J$-nef  on $X'$, although they are big and semi-positive.

We will perturb the K\"ahler class $\alpha$ to obtain strict positivity of $(\alpha, \beta)$ on $Z'$.  Let $\theta$ be a K\"ahler form on $\cM'$ and so $\theta$ restricted to $X'$ or $Z'$ is also a K\"ahler form.  We let 
\begin{equation}\label{hatal}
\hat\alpha=\hat\alpha(t, \epsilon) = (1+2\lambda t) \alpha + \epsilon t [\theta], ~~~\hat\beta=\hat\beta(t, \epsilon)= (1+t)\beta + \epsilon^2 t \min(1, t) [\theta].
\end{equation}

\begin{lemma} \label{320}  There exists $\epsilon_Z>0$ such that for any  $t>0$, $0< \epsilon< \epsilon_Z$ and any $k$-dimensional subvariety $V'$ of $Z'$ with $1\leq k \leq m$, we have  
\begin{equation}
\left(   n  \hat \alpha^k - k \hat \alpha ^{k -1}\cdot\hat\beta  \right) \cdot  V' > 0, 
\end{equation}
where $\hat\alpha=\hat\beta=\hat\alpha(t, \epsilon)$ and $\hat\beta=\hat\beta(t, \epsilon)$ are defined as in (\ref{hatal}).
\end{lemma}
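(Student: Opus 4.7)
The plan is to expand
$$P(V',t,\epsilon) := \bigl(n\hat\alpha^k - k\hat\alpha^{k-1}\wedge\hat\beta\bigr)\cdot V'$$
as a polynomial in $\epsilon$ at fixed $t>0$, identify its lowest-order coefficient via the geometry of $\Phi$, verify strict positivity cohomologically (using a projection formula together with (\ref{s3cond2})), and absorb the higher-order $\epsilon$-corrections by taking $\epsilon$ small. Starting from the factorization $n\hat\alpha^k - k\hat\alpha^{k-1}\hat\beta = \hat\alpha^{k-1}\wedge(n\hat\alpha - k\hat\beta)$ and substituting (\ref{hatal}), I get
$$n\hat\alpha - k\hat\beta = \bigl[n(1+2\lambda t)\alpha - k(1+t)\beta\bigr] + \epsilon t\bigl(n - k\epsilon\min(1,t)\bigr)\theta,$$
so the $\theta$-coefficient is a strictly positive multiple of the K\"ahler form $\theta$ whenever $\epsilon < n/m$, uniformly in $t>0$.

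Let $V'\subset Z'$ be irreducible of dimension $k$, and set $\tilde V := \Phi(V')$ with $k':=\dim\tilde V\leq k$. Because $\alpha,\beta$ on $\cM'$ are pullbacks from $\cM$, any intersection $\alpha^a\beta^b\cdot W\subset V'$ vanishes as soon as $a+b>k'$. A direct binomial expansion, together with the combinatorial identity $k\binom{k-1}{k'-1}=k'\binom{k}{k'}$, shows that the $\epsilon$-valuation of $P(V',t,\epsilon)$ is at least $k-k'$, with leading coefficient
$$Q(V',t) = \binom{k}{k'}\,t^{k-k'}\,(1+2\lambda t)^{k'-1}\bigl[(na\,\alpha^{k'} - k'b\,\alpha^{k'-1}\wedge\beta)\wedge\theta^{k-k'}\bigr]\cdot V',$$
where $a = 1+2\lambda t$ and $b = 1+t$.

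To verify $Q(V',t)>0$, I would apply the projection formula for $\Phi|_{V'}\colon V'\to\tilde V$:
$$\bigl[(na\,\alpha^{k'} - k'b\,\alpha^{k'-1}\beta)\wedge\theta^{k-k'}\bigr]\cdot V' = (na\,\alpha^{k'} - k'b\,\alpha^{k'-1}\beta)\cdot\Phi_{*}\bigl(\theta^{k-k'}\cap[V']\bigr),$$
and since $V'$ and $\tilde V$ are irreducible with $\dim\tilde V = k'$, the pushforward $\Phi_*(\theta^{k-k'}\cap[V']) = c_{V'}\cdot[\tilde V]$, with $c_{V'}>0$ equal to the $\theta$-volume of a generic $(k-k')$-dimensional fiber of $\Phi|_{V'}$ (positive since $\theta$ is K\"ahler on $\cM'$). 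Decomposing
$$na\,\alpha^{k'} - k'b\,\alpha^{k'-1}\beta = (n\alpha^{k'} - k'\alpha^{k'-1}\beta) + t(2n\lambda\,\alpha^{k'} - k'\alpha^{k'-1}\beta),$$
the first summand on $\tilde V$ is positive by the hypothesis (\ref{s3cond2}) (applied at dimension $k'$, with $k'<n$), while the second is bounded below by $(2n-k')\lambda t\,\alpha^{k'}\cdot\tilde V > 0$ using $\beta\leq\lambda\alpha$ from (\ref{kah}). Hence $Q(V',t) > 0$ for every irreducible $V'$ and every $t > 0$.

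Finally, to obtain a single $\epsilon_Z > 0$ that works uniformly in $V'$ and $t$, I would control the higher-order corrections $Q_j(V',t)$ for $j>k-k'$ against the leading term $Q(V',t)$. Each $Q_j$ is a linear combination of the non-negative numbers $\alpha^a\beta^b\theta^c\cdot V'$; by a uniform pointwise comparison $\alpha,\beta\leq C\theta$ on a compact neighborhood of $Z'$ in $\cM'$, each is dominated by a constant multiple of $\theta^k\cdot V'$, while the projection-formula expression for $Q(V',t)$ gives a comparable lower bound (up to bounded factors from the Chow-theoretic ``normalization'' $\theta^k\cdot V'$). Since positivity of $P(V',t,\epsilon)$ is homogeneous in the class $[V']$, it suffices to check it on normalized effective cycles, which form a compact family in the Chow scheme; continuity and compactness then furnish a uniform $\epsilon_Z$. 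The principal obstacle I anticipate is making this uniformity precise on the Chow scheme — in particular, obtaining a uniform positive lower bound for $Q(V',t)/(\theta^k\cdot V')$ across all irreducible components and dimensions, requiring a careful interplay between the projection formula and the compactness of the space of normalized effective cycles in $Z'$.
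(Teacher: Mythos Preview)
Your treatment of the top case $k=m$ (where $V'$ is a component of $Z'$, hence one of finitely many fixed varieties) is essentially the paper's argument: expand, observe the $\epsilon^0$ term is $(1+t)^{m-1}(n\alpha^m-m\alpha^{m-1}\beta)\cdot Z'>0$ by (\ref{s3cond2}), and absorb the $O(\epsilon)$ corrections with an $\epsilon$ depending on the fixed variety $Z'$.

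For $1\le k<m$, however, your approach diverges from the paper's and has a genuine gap at the uniformity step. You rely only on the numerical hypothesis (\ref{s3cond2}) to make the leading coefficient $Q(V',t)$ positive via the projection formula. That is correct for each fixed $V'$, but the positivity is \emph{not} uniform: the ratio $(n\alpha^{k'}-k'\alpha^{k'-1}\beta)\cdot\tilde V\big/\alpha^{k'}\cdot\tilde V$ has no positive lower bound under $J$-positivity alone --- a uniform lower bound is exactly the stronger hypothesis of uniform $J$-positivity (Definition~\ref{jamp}(3)), which the paper is trying to avoid. Your proposed Chow--scheme compactness cannot rescue this as written: the Chow variety of $k$-cycles in $Z'$ has infinitely many components (degree is unbounded), and you also need uniformity over the noncompact ray $t\in(0,\infty)$. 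You yourself flag this as the principal obstacle, and it is a real one.

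The paper closes this gap by invoking the \emph{induction hypothesis} rather than (\ref{s3cond2}): for $W=\Phi(V')$ with $\dim W\le k<m$, there is a smooth K\"ahler form $\omega_W\in\alpha$ near $W$ with $\cP_\chi(\omega_W)<n$, hence $(n\omega_W-l\chi)\wedge\omega_W^{l-1}\ge 0$ \emph{pointwise} for every $l\le n-1$. Representing $\hat\alpha$ on $V'$ by $\eta=(1+2\lambda t)\,\Phi^*\omega_W+\epsilon t\,\theta$ and expanding $\int_{V'}(n\eta^k-k\eta^{k-1}\wedge\hat\chi)$, every summand becomes a product of a scalar coefficient and a nonnegative integral of a pointwise nonnegative form; the scalar coefficients are all $\ge 0$ once $\epsilon<\epsilon(k,\lambda)$, and the pure term $(\epsilon t)^k\bigl(n-k\epsilon\min(1,t)\bigr)\int_{V'}\theta^k$ is strictly positive. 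Thus the threshold $\epsilon$ depends only on $k$ and $\lambda$, automatically uniform in $V'$ and in $t>0$. The missing idea in your argument is precisely this: use the inductively constructed subsolution as the representative of $\alpha$, so that positivity is pointwise rather than merely numerical.
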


\begin{proof} We first verify the case when $k=m$. Straightforward calculations show that there exists $C=C(Z, \alpha, \beta)>0$ such that for any $t>0$
\begin{eqnarray*}
&& \left( n  \hat\alpha  ^m - m  \hat\alpha ^{m -1}\cdot \hat \beta\right) \cdot Z' \\
&=& \left(n(1+2\lambda t)\alpha - m(1+t) \beta+ \epsilon t \left(n-  \epsilon m \min(1, t) \right)\theta  \right)  \cdot \hat\alpha^{m-1} \cdot Z' \\
&\geq&(1+t)^{m-1} \left(  n \alpha^m  - m \alpha^{m-1}\cdot \beta\right) \cdot Z'  - C\sum_{k=1}^{m-2} (1+t)^k(\epsilon t)^{m-1-k} \alpha^k \cdot [\theta]^{m-k-1}\cdot Z' \\
&>& 2^{-1}   \left(  n \alpha^m  - m \alpha^{m-1}\cdot \beta\right) \cdot Z' \\
&>& 0.
 \end{eqnarray*}
by choosing sufficiently small $\epsilon=\epsilon(Z, \alpha, \beta)>0$.

For $1\leq k < m$,  we can assume $V'$ is irreducible and let $W = \Phi(V')$. Then $W$ is a subvariety of $X$ of $\dim W \leq k <m$.  By the assumption of the induction argument, we can assume that there exists a K\"ahler form $\omega_W \in \alpha $  in an open neighborhood of $W$ in $\cM$ satisfying
$$ n (\omega_W)^{n-1} - (n-1) (\omega_W)^{n-2} \wedge \chi >0 .$$
It immediately implies that for any $l \leq n-1$, 
$$ n (\omega_W)^l - l (\omega_W)^{l-1} \wedge \chi >0. $$
We let $\omega_{V'}=\Phi^*\omega_W$. Then %

$$ \left( n (\omega_{V'})^l - l (\omega_{V'})^{l-1} \wedge \chi \right) \wedge \theta^{k-l}\geq0 $$
in $V'$ for $l=1, 2, ..., k$.  
We let 
$$\eta(t, \epsilon) = (1+2\lambda t) \omega_{V'} + \epsilon t \theta, ~ \hat\chi(t, \epsilon) = (1+t)\chi+ \epsilon^2 t \min(1,t) \theta. $$
For conveniences, we write $\hat\alpha$, $\hat \beta$, $\eta$ and $\hat\chi$ for $\hat \alpha(t, \epsilon)$, $\hat \beta(t, \epsilon)$, $\eta(t, \epsilon)$ and $\chi(t, \epsilon)$. Then for  $t>0$ and sufficiently small $\epsilon>0$, we have 
\begin{eqnarray*}
&&\left(   n \hat \alpha ^k - k \hat \alpha  ^{k-1}\cdot \hat \beta\ \right) \cdot V' \\
&=& \int_{V'} \left( n \eta ^k - k \eta ^{k-1}\wedge \hat\chi\right)\\
&=& \sum_{l=0}^k n \begin{pmatrix} k\\ l \end{pmatrix}(1+2\lambda t)^l (\epsilon t) ^{k-l}  \int_{V'} (\omega_{V'})^l \wedge \theta^{k-l} \\
&&- \sum_{l=1}^k  k  \begin{pmatrix} k-1\\ l-1 \end{pmatrix}(1+t) (1+2\lambda t)^{l-1} (\epsilon t)^{k-l} \int_{V'} (\omega_{V'})^{l-1} \wedge \chi \wedge \theta^{k-l} \\
&& - \sum_{l=0}^{k-1} k \begin{pmatrix} k-1\\ l \end{pmatrix}(1+2\lambda t)^l  (\epsilon t)^{k-l} \epsilon \min(1, t) \int_{V'}  (\omega_{V'})^l \wedge \theta^{k-l}\\
&= & \sum_{l=1}^{k} \begin{pmatrix} k\\ l \end{pmatrix} (1+2\lambda t)^{l-1} (\epsilon t) ^{k-l} \left(  n (1+2\lambda t) - n(1+t) - (k-l) (1+2\lambda t) \epsilon \min(1,t)  \right) \int_{V'}   (\omega_{V'})^l \wedge\theta^{k-l} \\
&&+ \sum_{l=1}^{k} \begin{pmatrix} k\\ l \end{pmatrix} (1+t) (1+2\lambda t)^{l-1} (\epsilon t) ^{k-l}  \int_{V'}   \left( n \omega_{V'} - l \chi \right) \wedge(\omega_{V'})^{l-1} \wedge\theta^{k-l} \\
&&+ (\epsilon t)^k \left( n - k \epsilon \min (1,t)  \right) \int_{V'} \theta^k    \\
&>&0
\end{eqnarray*}
 by the choice of $\eta$ and by choosing sufficiently small $\epsilon=\epsilon(k, \lambda)>0$.

\end{proof}

Since $Z'$ is a union of disjoint smooth submanifolds of $\cM'$ (or $X'$), for convenience, we let $\hat Z$ be a fixed component of $Z'$. We let $S_Z$ be the singular set of $Z$ and $\cS_{\hat Z}= \hat Z \cap \Phi^{-1}(\cS_Z)$. Then $\cS_{\hat Z}$ coincides with  the exceptional locus of $\Phi$ on $\hat Z$. We will consider the following twisted $J$-equation on $\hat Z$
\begin{equation} \label{332}
n (\hat \omega)^m = m  (\hat \omega )^{m-1}\wedge \hat \chi  + c_{t, \epsilon} ( \hat \chi)^m, 
\end{equation}
for $t>0$ and $\epsilon \in (0,\epsilon_Z)$, where 
$\epsilon_Z>0$ is defined in Lemma \ref{320}, $$\hat \omega= \hat\omega(t, \epsilon) \in \hat \alpha=\hat\alpha(t, \epsilon)$$ is a K\"ahler form and $c_{t, \epsilon}$ is the normalization constant defined by
\begin{equation} \label{coneqn}
 n (\hat \alpha )^m = m (\hat \alpha)^{m-1} \cdot \hat \beta + c_{t, \epsilon} (\hat \beta)^m. 
 \end{equation}
By Lemma \ref{320}, for any $\epsilon\in(0,\epsilon_Z)$ and $t>0$, we have $$c_{t, \epsilon}>0. $$

Equation (\ref{332})) can always be solved for  $t=1$ for any $\epsilon \in (0, \epsilon_Z)$.  In fact, if we let $\eta=\eta(t, \epsilon) = (1+2\lambda t)\omega_0+\epsilon t \theta$, then at $t=1$, 
\begin{eqnarray*}
&&n\eta^{m-1} - (m-1) \eta^{m-2} \wedge\hat\chi \\
&=& \left((1+2\lambda ) \omega_0 - 2\chi + \epsilon (n -(m-1)\epsilon)   \theta \right)\wedge \eta^{m-2}\\
&>&0
\end{eqnarray*}
by the assumption (\ref{kah}). Therefore  we can apply  Theorem \ref{chg} to solve equation (\ref{332}).  Let 
$$\cT_\epsilon = \{ t\in (0, 1] ~|~(\ref{332}) \textnormal { has a smooth solution at } t \textnormal{ for } \epsilon \in (0, \epsilon_Z) \}$$
and $$t_\epsilon=\inf \cT_\epsilon.$$

Obviously, $\cT_\epsilon$ is open and by applying the continuity method, for each $\epsilon\in (0, \epsilon_Z)$, equation (\ref{332}) can be solved for all  $t\in (t_\epsilon, 1]$.

\begin{lemma} \label{s4cont} For any $\epsilon\in (0,\epsilon_Z)$, $t_\epsilon=0$.

\end{lemma}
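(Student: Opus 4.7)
The plan is to prove $t_\epsilon = 0$ by contradiction via the continuity method. Suppose $t_\epsilon > 0$. Since $\cT_\epsilon \subset (0,1]$ is already noted to be open, producing a smooth K\"ahler solution to (\ref{332}) at $t = t_\epsilon$ would put $t_\epsilon \in \cT_\epsilon$, and openness would then force $\cT_\epsilon$ to contain points strictly below $t_\epsilon$, contradicting $t_\epsilon = \inf\cT_\epsilon$. The task therefore reduces to passing to a smooth limit of the solutions $\hat\omega(t,\epsilon)$ as $t \to t_\epsilon^+$.

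The first step is to collect uniform data on the compact interval $[t_\epsilon,1]$. The coefficients $(1+2\lambda t)$ and $\epsilon t$ in the background form $\eta(t,\epsilon) = (1+2\lambda t)\omega_0 + \epsilon t\,\theta$ and the cohomology classes $\hat\alpha(t,\epsilon)$, $\hat\beta(t,\epsilon)$ depend smoothly on $t$, so they and their derivatives are uniformly controlled. Crucially, Lemma \ref{320} provides a positive lower bound on the twisting constants $c_{t,\epsilon}$ defined by (\ref{coneqn}), again uniform on $[t_\epsilon,1]$, since $c_{t,\epsilon}$ depends continuously on $t$ and remains strictly positive on all of $(0,1]$.

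The second step is to establish a priori $C^{k,\alpha}$ estimates for $\hat\omega(t,\epsilon) = \eta(t,\epsilon) + \ddbar\hat\varphi(t,\epsilon)$ uniformly in $t$. Along the continuity path, Theorem \ref{chg} guarantees that each solution itself satisfies the strict subsolution inequality
$$n\,\hat\omega(t,\epsilon)^{m-1} > (m-1)\,\hat\chi \wedge \hat\omega(t,\epsilon)^{m-2},$$
which quantitatively controls the ellipticity of the operator. With this built-in subsolution and the uniform background data from the previous step, the estimate machinery developed for the twisted $J$-equation in \cite{SW1, W2, ChG} applies with constants depending only on $t_\epsilon$ and $\epsilon$: a Kolodziej-type pluripotential argument yields a uniform $C^0$-bound on $\hat\varphi(t,\epsilon)$, a maximum-principle computation on a trace-type test quantity yields the $C^2$-bound, and Evans-Krylov together with Schauder bootstrapping supply $C^{k,\alpha}$ estimates to all orders.

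Extracting a diagonal subsequence $t_i \downarrow t_\epsilon$ then produces a smooth limit $\hat\omega_\infty \in \hat\alpha(t_\epsilon,\epsilon)$ which is K\"ahler (its volume form stays uniformly bounded below through the equation) and solves (\ref{332}) at $t = t_\epsilon$; this puts $t_\epsilon \in \cT_\epsilon$ and contradicts $t_\epsilon = \inf\cT_\epsilon$ by openness. The main obstacle is the uniformity of the estimates as $t$ varies: the equation, background form, cohomology class, and twisting constant all move with $t$, and one must track this dependence carefully. The uniform positivity of $c_{t,\epsilon}$ from Lemma \ref{320} is what prevents Chen's estimates from degenerating, and the continuity-path solution acting as its own subsolution supplies the quantitative slope margin required. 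This is precisely the point where non-uniform $J$-positivity (as opposed to Chen's uniform version) becomes admissible: the perturbation $\epsilon>0$ and the parameter $t>0$ together manufacture the uniform margin that our hypotheses do not supply directly.
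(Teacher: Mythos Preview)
Your argument has a genuine gap at the a priori estimate step. The estimates for the twisted $J$-equation in \cite{SW1, W2, ChG} are not driven by the mere strict inequality $n\hat\omega^{m-1} > (m-1)\hat\chi\wedge\hat\omega^{m-2}$; they depend quantitatively on a \emph{fixed} subsolution $\Omega_0$ satisfying $\cP_{\hat\chi}(\Omega_0)\leq n-\delta$ for some definite $\delta>0$, and the constants in the $C^2$ estimate blow up as $\delta\to 0$. Theorem \ref{chg} does guarantee that each solution $\hat\omega(t,\epsilon)$ satisfies the strict cone condition, but it gives no uniform lower bound on the margin as $t\to t_\epsilon^+$. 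Using the solution as its own subsolution is circular: the margin could collapse precisely when the estimates are needed, and nothing in your setup (uniform positivity of $c_{t,\epsilon}$, smooth dependence of the background data) prevents this. Indeed, if such a direct limit argument worked, the subsolution hypothesis in Theorem \ref{chg} would be superfluous for closedness along any continuity path.

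The paper's proof is structurally different and does not attempt to pass to a limit of solutions. Instead, assuming $t_\epsilon>0$, it reruns the entire mass-concentration and gluing machinery of \S5 and \S6 with $t\to t_\epsilon^+$ in place of $t\to 0^+$. Because $\hat\alpha(t_\epsilon,\epsilon)$ and $\hat\beta(t_\epsilon,\epsilon)$ are genuinely K\"ahler when $t_\epsilon>0$, the equation is non-degenerate there, and those arguments produce a smooth K\"ahler form $\omega'\in\hat\alpha(t_\epsilon,\epsilon)$ satisfying the subsolution inequality with a definite margin. Theorem \ref{chg} then applies at $t_\epsilon$ to yield a solution directly, giving the contradiction. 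In other words, the subsolution at $t_\epsilon$ is \emph{constructed} by the \S5--\S6 argument, not inherited from a limit; this is why Lemma \ref{s4cont} is deferred to \S7 and why \S5--\S6 are written under the provisional assumption that it holds.
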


 We will assume Lemma \ref{s4cont} in \S 5 and \S 6.  The proof of  Lemma \ref{s4cont} will be given in \S 7, by going through the same argument  in \S 5 and \S 6. Indeed if $t_\epsilon>0$ for some $\epsilon\in (0, \epsilon_Z)$, equation (\ref{332}) is a non-degenerate at $t_\epsilon$ and one can repeat the same argument for $t_\epsilon=0$ to obtain a smooth subsolution for (\ref{332}) and then a smooth solution.

\section{A  mass concentration in the degenerate case}

In this section, we will prove a mass concentration result similarly as in \cite{ChG} based on the techniques in \cite{DP}. We will keep the notations as in \S 4 and assume Lemma \ref{s4cont}.   

Before stating the main result of this section. We will define a local regularization for global positive currents.

\begin{definition} \label{s5regdef} Let $M$ be an $n$-dimensional K\"ahler manifold and $\Theta$ be a closed $(1,1)$-positive current. Then the local regularization $\Theta^{(r)}=\{ \Theta_j^{(r)}\}_{j\in \cJ}$ of $\Theta$ with respect to a finite partition $\{B_j\}_{j\in \cJ} $ of $M$ and a scale $r$ is defined as follows. 
\begin{enumerate}

\medskip

\item $\{ B_j\}_j$ is a finite open covering of $M$. Each $B_j$ is biholomorphic to a Euclidean unit ball $B_1(0)$ in $\mathbb{C}^n$ equipped with a standard Euclidean metric $g_j$.  We also require that $\{2B_j\}_j$ is also a covering of $M$, where $2B_j$ is biholomorphic to $B_2(0)$ with respect to $g_j$. 

\medskip

\item $\Theta_j^{(r)}(x)$ is the standard regularization in $B_j$ for $x\in B_j$, defined by the following convolution
\begin{equation}\label{locreg}
\Theta_j^{(r)}(x) = \int_{B_j} r^{-2n} \rho\left(\frac{|y|}{r}\right)  \Theta(x-y) d\textnormal{Vol}_{g_j} (y)
\end{equation} 
for $r\in (0, 1)$, where $\rho(t)$ is a smooth non-negative function with support in $[0,1]$ satisfying
$$\int_{B_1(0)}   \rho(|y|) d\textnormal{Vol}_{g_j}(y)=1. $$

\end{enumerate}

\end{definition}

For simplicity we can assume that $\rho(t)$ is a decreasing function in $t$ and compactly supported on $[0,1]$. Also it  is constant on $[0, 1/2]$. The main advantage of applying the local regularization is that it does not lose positivity while the global regularization might. The other advantage is that if $\Theta = \ddbar \varphi$ locally for some plurisubharmonic function $\varphi$, then $\Theta^{(r)} = \ddbar \varphi^{(r)}$, where $\varphi^{(r)}$ is the local regularization defined by the same way as in (\ref{locreg}).

Now we can state the main result of this section. First, we have to pick a partition for $\hat Z$. Unlike the nondegenerate case in \cite{ChG}, $\hat\chi$ become degenerate near the exceptional locus of $\Phi$ as $t, \epsilon \rightarrow 0$. Even after local regularization, the positivity can not be maintained uniformly as it depends on the scale of regularization $r$.  For any small $\hat\epsilon>0$, we can choose a sufficiently fine open covering $\{B_j\}_{j\in \cJ}$ equipped a local Euclidean metric $g_j$ as in Definition \ref{s5regdef} so that in each $B_j$, there exists K\"ahler metric $\hat\chi_j$ with  constant coefficients such that in each $2B_j$
\begin{equation}\label{s5epchoi}
\hat \chi_j \leq \chi  \leq \hat\chi_j + \hat\epsilon g_j
\end{equation}
for some fixed small $\hat\epsilon>0$ to be chosen later. In fact, for any given $\varepsilon>0$, by picking sufficiently small $t$ and $\epsilon>0$ dependent on $\varepsilon$, we will choose 
$%
\hat\epsilon=\hat\epsilon(\varepsilon)< (2m)^{-1} \delta_0\varepsilon, 
$
where $\delta_0$ is defined as in Lemma \ref{s5mass}. 

We can always assume $g_j$  is quasi-equivalent to a fixed K\"ahler metric on $\hat Z$ independent of the choice $\{B_j\}_j$ because one can fix a partition and make it into a finer partition.  For convenience, let $\theta$ be a fixed K\"ahler metric on $\hat Z$ and we assume that in each $2B_j$
\begin{equation} \label{thetacho}
 2^{-1/100} g_j\leq  \theta \leq 2^{1/100} g_j 
 \end{equation}
by choosing finer partitions. We define for $t\in (0, 1)$ and $\epsilon\in (0, \epsilon_Z)$
\begin{equation}
\hat\chi=\chi(t, \epsilon) = (1+t) \chi +\epsilon^2 t\min(1,t) \theta \in \hat \alpha, ~\hat\omega_0= (1+2\lambda t)\omega_0 + \epsilon t \theta \in \beta, 
\end{equation}
where $\epsilon_Z$ is given in Lemma \ref{320} and $\lambda$ in (\ref{kah}).

 In fact, we will choose $\theta$ as in (\ref{s5phi}). We also assume that for any $t\in (0, \epsilon_Z)$ and $\epsilon\in (0,\epsilon_Z)$, 
\begin{equation}\label{s5kdef}
 \hat\chi \leq \lambda ~\hat \omega_0
\end{equation}
for some fixed $\lambda>1$.

\begin{theorem} \label{masscon}There exist $\delta>0$, a finite Euclidean partition $\{B_j\}_{j\in \cJ} $ of $\hat Z$, $\varepsilon>0$,  $r_0>0$ and  a K\"ahler current $\Omega \in (1-\delta)  \alpha $,  such that for all $0<r<r_0$, $j\in \cJ$, we have in $B_j$, 
$$\cP_{ \chi } \left( \Omega^{(r)} \right) < n- \varepsilon. $$
Furthermore, $\Omega$ has positive Lelong number along $\cS_{\hat Z}$.

\end{theorem}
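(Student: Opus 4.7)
I would construct $\Omega$ as an appropriately scaled weak limit of the twisted $J$-equation solutions $\hat\omega(t,\epsilon)$ furnished by Lemma \ref{s4cont}, combining Chen's pointwise positivity from Theorem \ref{chg} with the geometric degeneration of $\chi = \Phi^{\ast}\chi_X$ along the $\Phi$-exceptional locus $\cS_{\hat Z}$.  The key initial observation is that freezing a small positive $t_0$, rather than sending $t\to 0$ alongside $\epsilon$, manufactures a genuine strict gap: Theorem \ref{chg} gives $\cP_{\hat\chi}(\hat\omega(t_0,\epsilon)) < n$ pointwise, and since $\hat\chi \geq (1+t_0)\chi$ this upgrades to the uniform bound $\cP_\chi(\hat\omega(t_0,\epsilon)) < n/(1+t_0)$ independent of $\epsilon$, which is the slack I will later spend on approximation errors.

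I would then normalize $\hat\omega(t_0,\epsilon) = \hat\omega_0(t_0,\epsilon) + \ddbar \hat\varphi_{t_0,\epsilon}$ by $\sup_{\hat Z}\hat\varphi_{t_0,\epsilon} = 0$ and use standard $L^1$-compactness of quasi-plurisubharmonic functions to extract a subsequential limit $\hat\varphi$ as $\epsilon \to 0$, producing a closed positive current in class $(1+2\lambda t_0)\alpha$.  The target class $(1-\delta)\alpha$ is then reached by a cohomological adjustment (subtracting a small K\"ahler term and using a regularized maximum with a small K\"ahler form to preserve the lower bound $\Omega \geq \varepsilon\omega_0$ off $\cS_{\hat Z}$), with $\delta$ chosen comparable to $2\lambda t_0$.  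For the $\cP$-bound on the local regularizations, in each $B_j$ I work with the constant-coefficient K\"ahler form $\hat\chi_j$ of \eqref{s5epchoi}: the inequality $\hat\chi_j \leq \chi$ gives $\cP_{\hat\chi_j}(\hat\omega(t_0,\epsilon)) \leq \cP_\chi(\hat\omega(t_0,\epsilon)) < n/(1+t_0)$, and since $\hat\chi_j$ has constant coefficients a linear change of variables reduces the situation to Lemma \ref{s3l34}, which propagates this bound to the convolution-regularized $\Omega^{(r)}$.  The error from replacing $\hat\chi_j$ back by $\chi$ is $O(\hat\epsilon)$ by sub-additivity of $\cP$ in its lower slot applied to $\chi \leq \hat\chi_j + \hat\epsilon g_j$; the explicit choice $\hat\epsilon < (2m)^{-1}\delta_0 \varepsilon$ from the setup then delivers $\cP_\chi(\Omega^{(r)}) < n - \varepsilon$ for all $r<r_0$ and $j\in\cJ$.

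The main obstacle is the positive Lelong number along $\cS_{\hat Z}$.  Geometrically it is forced because $\chi = \Phi^{\ast}\chi_X$ satisfies $\beta \cdot E = 0$ for every $\Phi$-exceptional irreducible divisor $E \subset \cS_{\hat Z}$, so any fixed tubular neighborhood of $\cS_{\hat Z}$ has vanishingly small $\hat\chi^m$-mass as $t,\epsilon \to 0$, while the $J$-positivity assumption \eqref{s3cond2} together with Lemma \ref{320} keeps the twisting constant $c_{t,\epsilon}$ uniformly bounded below by some $c_0 > 0$.  The twisted equation \eqref{332} then forces $\hat\omega^m$ to absorb a definite excess of $\hat\chi^m$-mass, which can only occur by singularity formation in the potential $\hat\varphi$ at $\cS_{\hat Z}$.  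To turn this into a quantitative statement I would test the equation against the singular weight $\log|\sigma_E|^2_{h_E}$ attached to an effective exceptional $\Q$-divisor supporting $\cS_{\hat Z}$ (the same device used in Lemmas \ref{s2l21} and \ref{s2l22}), and run a Demailly--Paun-style Monge--Amp\`ere mass comparison to extract a uniform positive lower bound on the Lelong numbers $\nu(\hat\varphi, p)$ for $p \in \cS_{\hat Z}$.  The delicacy is that $\hat\omega$ and $\hat\chi$ are degenerating simultaneously near $\cS_{\hat Z}$, which is precisely why the local regularization of \cite{BK} is needed here in place of the global smoothing of \cite{DP}.
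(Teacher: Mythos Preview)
Your proposal has two genuine gaps, and they are linked.

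\textbf{The cohomology--class arithmetic does not close.}  You correctly observe that $\hat\chi \geq (1+t_0)\chi$ upgrades the subsolution bound to $\cP_\chi(\hat\omega(t_0,\epsilon)) < n/(1+t_0)$.  But $\hat\omega(t_0,\epsilon)$ lives in $\hat\alpha(t_0,\epsilon) = (1+2\lambda t_0)\alpha + \epsilon t_0[\theta]$, so your weak limit lies in $(1+2\lambda t_0)\alpha$, not $(1-\delta)\alpha$.  Rescaling by $(1-\delta)/(1+2\lambda t_0)$ multiplies $\cP_\chi$ by the reciprocal, giving the bound $\frac{(1+2\lambda t_0)}{(1-\delta)(1+t_0)}\,n$, and since $\lambda \geq 1$ (see (\ref{kah})) this exceeds $n$ for every $t_0,\delta>0$.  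Subtracting $(2\lambda t_0+\delta)\omega_0$ instead is no better: it only decreases the form, hence increases $\cP_\chi$, and near $\cS_{\hat Z}$ the only available lower bound $\hat\omega > n^{-1}\hat\chi$ degenerates, so positivity is not even guaranteed.  In short, the ``slack'' $t_0$ you manufacture is strictly smaller than the cohomological debt $2\lambda t_0$ you incur, so there is no budget left to reach $(1-\delta)\alpha$ with $\cP_\chi < n-\varepsilon$.

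\textbf{The Lelong--number mechanism is not the one you describe.}  Your heuristic---that vanishing of $\hat\chi^m$ near $\cS_{\hat Z}$ forces $\hat\omega^m$ to concentrate there---runs the wrong way.  The equation $n\hat\omega^m = m\hat\omega^{m-1}\wedge\hat\chi + c_{t,\epsilon}\hat\chi^m$ says that where $\hat\chi$ is small, the right side is small, which if anything suggests $\hat\omega^m$ is small, not concentrated.  There is no reason the limiting potential must develop positive Lelong number along $\cS_{\hat Z}$; degenerate complex Hessian equations routinely have bounded solutions.  Testing against $\log|\sigma_E|^2$ does not extract a lower bound on $\nu(\hat\varphi,\cdot)$ without an a priori mass concentration, which you have not produced.

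The paper's route is structurally different and addresses both issues simultaneously.  It passes to the product $\cZ = \hat Z \times \hat Z$, solves a twisted $J$-equation there with a diagonal-concentrating perturbation $\chi_{\cZ,s}$ (the Demailly--Paun device), and pushes forward by $\pi_1$ to obtain $\omega_s \in \hat\alpha$ with $\cP_{\hat\chi}(\omega_s) < n$.  The crucial gain is Lemma \ref{s5mass}: the limiting $(m,m)$-current on $\cZ$ dominates $100\cV\delta_0[\Delta]$, which after push-forward means the regularized $\omega_{s,\eta}^{(r)}$ carries a uniform excess $\gtrsim \delta_0\hat\chi$.  This excess is what is \emph{spent} in Proposition \ref{s5prop51} to absorb both the subtraction $-\lambda^{-1}\delta_0\hat\omega_0$ (landing the class in $(1-\lambda^{-1}\delta_0)\hat\alpha$) and the addition $+100\delta_0\ddbar\phi$ (which carries the prescribed Lelong number along $\cS_{\hat Z}$, since $\phi$ is chosen with that singularity in (\ref{s5phi})).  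The Lelong number is thus \emph{inserted by hand}, paid for by diagonal mass concentration, not forced by degeneration of the equation on $\hat Z$ itself.
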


The rest of the section is devoted to the proof of Theorem \ref{masscon} using ideas from \cite{ChG, DP}. We let 
$$\cZ = \hat Z\times \hat Z$$
 and 
 $$\Delta =\{ (p, p) \in \cZ~|~p\in \cZ\}$$
  be the diagonal submanifold of $\cZ$. 
Suppose $\Delta $ is locally defined by holomorphic functions $\{ f_{j, k}\}_{j, k}$ on finitely many domains $\{U_j\}_j$ covering $\cZ$. We define
$$\psi_s = \log \left( \sum_j \rho_j \sum_k |f_{j, k}|^2 +s\right)$$ 
 for $s\in (0, \epsilon_Z)$, where $\{\rho_j\}_j$ is a partition of unity for $\{U_j\}_j$. 
 
 Let $\pi_1$ and $\pi_2$ be the projections maps from $\cZ$ to $\hat Z$ and let
 $$\chi_{\cZ} = \chi_{\cZ}(t, \epsilon)= \pi_1^*~\hat\chi + \pi_2^*~ \hat \chi,$$
 for $t, \epsilon>0$, where $\hat\chi= \hat\chi(t, \epsilon)$ is defined as in \S 4.  

Since we assume the resolution $\Phi$ is a successive blow-up along smooth center, there exists $\phi\in \textnormal{PSH}(\hat Z, \chi)$ such that $\phi$ is smooth away from $\cS_{\hat Z}$ and on $\hat Z\setminus \cS_{\hat Z} $, We can choose the K\"ahler form $\theta$ (as in (\ref{thetacho}) ) on $\hat Z$ such that 
\begin{equation}\label{s5phi}
\theta= \chi + \ddbar \phi>0
\end{equation}
 on $\hat Z \setminus \cS_{\hat Z}$. We can also assume that $\omega_0 +\ddbar \phi$ is also a K\"ahler form on $\hat Z \setminus \cS_{\hat Z}$ satisfying
 $$  \lambda^{-1} \theta \leq \omega_0 + \ddbar \phi \leq  \lambda \theta $$
on $\hat Z \setminus \cS_{\hat Z}$ for sufficiently large $\lambda \geq 1$.  In particular, $\phi$ has positive Lelong number along $\cS_{\hat Z}$.  If we let 
 $$\tilde \phi = \pi_1^*\phi+\pi_2^*\phi, $$ 
 then $(\chi_\cZ + \ddbar \tilde \phi)|_{\cZ \setminus \left( \pi_1^{-1}(\cS_{\hat Z}) \cup \pi_2^{-1}(\cS_{\hat Z}) \right)}$ extends to a K\"ahler metric on $\cZ$. We also define 
 \begin{equation}\label{permass}
 \chi_{\cZ, s} =\chi_{\cZ, s}(t, \epsilon)= \chi_\cZ + s \ddbar \tilde\phi+ s^2 \ddbar \psi_s,
 \end{equation}
for a fixed sufficiently small $s>0$ so that $\chi_{\cZ, s}$ is a K\"ahler current and
 \begin{equation}\label{fdef}
 F_{\cZ,  s} =F_{\cZ,  s} (t, \epsilon) = \frac{ (\chi_{\cZ, s})^{2m}} {(\chi_{\cZ})^{2m}} - (1+c_{t, \epsilon, s}) +  \frac{c_{t, \epsilon}   }{m+n}, 
 \end{equation}
 where $c_{t, \epsilon}$ is defined in equation (\ref{coneqn}) and $c_{t, \epsilon, s}$ is defined by the normalization
 $$\int_{\cZ}(\chi_{\cZ, s})^{2m}= (1+c_{t, \epsilon, s}) \int_{\cZ}(\chi_{\cZ})^{2m}.$$

Note that $\lim_{s\rightarrow 0} c_{t, \epsilon, s} = 0$.   We then consider the following twisted $J$-equation on $\cZ$ by
 \begin{equation}\label{doubleeqn}
 (\omega_{\cZ, s})^{2m} = (\omega_{\cZ, s})^{2m-1}\wedge \left( \frac{2m}{m+n}\right) \chi_{\cZ} + F_{\cZ, s} (\chi_{\cZ})^{2m},  
 \end{equation}
 where
\begin{equation}
\omega_{\cZ, s} \in \pi_1^* ~ \hat \alpha  +  \pi_2^* ~\hat \beta   
\end{equation}
 and 
  \begin{equation}
 \int_\cZ F_{\cZ, s} (\chi_{\cZ})^{2m} =  \frac{c_{t,\epsilon} }{m+n} \int_\cZ (\chi_\cZ)^{2m}>0
 \end{equation}
  Equation (\ref{doubleeqn}) can also be written as 
 \begin{equation} \label{doubleeqn2}
 tr_{\omega_{\cZ, s}} (\chi_{\cZ}) +  (m+n) F_{\cZ, s}    \frac{ (\chi_\cZ)^{2m}}{ (\omega_{\cZ, s})^{2m}}  =  m+n.
 \end{equation}

  \begin{lemma}\label{flb} There exists a sufficiently small $s>0$, such that for any $t\in (0, 1)$, $\epsilon\in (0,\epsilon_Z)$ and $s\in (0, 1)$,
 $$\inf_\cZ F_{\cZ, s} > - \frac{1}{2 (2m)^{2m+1} (m+n)^{2m}}.$$ 
 
 \end{lemma}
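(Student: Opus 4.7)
The plan is to bound $F_{\cZ, s}$ below by combining a pointwise lower bound on $(\chi_{\cZ, s})^{2m}/(\chi_\cZ)^{2m}$ with a cohomological control of $c_{t, \epsilon, s}$, both uniform in $t \in (0, 1)$ and $\epsilon \in (0, \epsilon_Z)$.

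First, I would establish pointwise that $\chi_{\cZ, s} \geq (1-s)\chi_\cZ + (s/2)\theta_\cZ$ as positive Hermitian forms on $\cZ$. From the choice of $\phi, \theta$ in (\ref{s5phi}), one has $\hat\chi + \ddbar\phi = t\chi + (1 + \epsilon^2 t\min(1,t))\theta \geq \theta$ on $\hat Z\setminus\cS_{\hat Z}$, so $\chi_\cZ + \ddbar\tilde\phi \geq \theta_\cZ$. Hence $\chi_\cZ + s\ddbar\tilde\phi = (1-s)\chi_\cZ + s(\chi_\cZ + \ddbar\tilde\phi) \geq (1-s)\chi_\cZ + s\theta_\cZ$. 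Since $\psi_s = \log(\sum_j \rho_j h_j + s)$ is smooth for $s > 0$ and a direct computation using the (fixed) partition data gives $\ddbar\psi_s \geq -C\theta_\cZ$ uniformly in $s, t, \epsilon$, the $s^2\ddbar\psi_s$ contribution is absorbed by $s\theta_\cZ$ once $s < 1/(2C)$, yielding the claim. By monotonicity — if $A \geq B \geq 0$ as positive Hermitian $(1,1)$-forms then $A^{2m} \geq B^{2m}$ as top-degree forms — we obtain
\[
\frac{(\chi_{\cZ, s})^{2m}}{(\chi_\cZ)^{2m}} \geq \frac{((1-s)\chi_\cZ)^{2m}}{(\chi_\cZ)^{2m}} = (1-s)^{2m} \geq 1 - 2ms
\]
pointwise on $\cZ$, where the ratio is well defined because $\chi_\cZ$ is a genuine K\"ahler form on $\cZ = \hat Z \times \hat Z$ for $t, \epsilon > 0$ (thanks to the nondegenerate $\epsilon^2 t\min(1,t)\theta$ contribution in $\hat\chi$).

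Next, I would show $c_{t,\epsilon, s} \to 0$ as $s \to 0$, uniformly in $t, \epsilon$. Formally $[\chi_{\cZ, s}] = [\chi_\cZ]$ in cohomology since both $\ddbar\tilde\phi$ and $\ddbar\psi_s$ are $d$-exact, so the naive expectation is $c_{t,\epsilon, s} = 0$; the only possible discrepancy comes from Lelong-number mass of $\ddbar\tilde\phi$ along the codimension-$\geq 1$ loci $\pi_1^{-1}(\cS_{\hat Z}) \cup \pi_2^{-1}(\cS_{\hat Z})$. Using truncations $\tilde\phi_\delta := \max(\tilde\phi, -1/\delta)$, applying Stokes' theorem in the binomial expansion of $(\chi_\cZ + s\ddbar\tilde\phi_\delta + s^2\ddbar\psi_s)^{2m}$, and passing to the limit $\delta \to 0$ via Bedford--Taylor convergence for bounded quasi-psh approximants yields $|c_{t,\epsilon, s}| \leq C' s$ uniformly, where $C'$ depends only on fixed topological intersection numbers.

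Combining the two estimates,
\[
F_{\cZ, s} \geq (1 - 2ms) - (1 + c_{t,\epsilon, s}) + \frac{c_{t,\epsilon}}{m+n} \geq \frac{c_{t,\epsilon}}{m+n} - 2ms - C' s.
\]
By Lemma \ref{320} applied with $V' = \hat Z$, the constant $c_{t,\epsilon}$ is strictly positive on $(0,1) \times (0, \epsilon_Z)$, and by continuity of intersection numbers $c_{t,\epsilon} \geq c_* > 0$ uniformly (its limit as $t, \epsilon \to 0$ equals $(n\alpha^m - m\alpha^{m-1}\cdot\beta)\cdot\hat Z/(\beta^m\cdot\hat Z) > 0$ by assumption (\ref{s3cond2})). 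Choosing $s$ small enough (depending only on $m, n, c_*, C'$) then gives $F_{\cZ, s} > -\frac{1}{2(2m)^{2m+1}(m+n)^{2m}}$. The main technical obstacle is the uniform control of $c_{t,\epsilon, s}$: the presence of the singular current $\ddbar\tilde\phi$ requires a delicate Bedford--Taylor argument to justify the Stokes computation and to ensure the intersection contributions from the Lelong singularities do not blow up as $t, \epsilon \to 0$.
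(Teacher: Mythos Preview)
Your proof follows essentially the paper's approach: both establish the pointwise inequality $\chi_{\cZ,s}\ge(1-\varepsilon)\chi_\cZ$ for small $s$ (the paper via a uniform bound $A\theta_\cZ+\ddbar\psi_s>0$, combined with $\chi_\cZ+\ddbar\tilde\phi\ge\theta_\cZ$, exactly as you do) and then use $c_{t,\epsilon,s}\to 0$ to conclude $F_{\cZ,s}\ge(1-\varepsilon)^{2m}-1-o(1)$, which for small $\varepsilon$ beats the fixed negative target.

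The one difference is in bookkeeping: you invoke a uniform lower bound $c_{t,\epsilon}\ge c_*>0$ to absorb the error terms, whereas the paper simply discards the nonnegative contribution $c_{t,\epsilon}/(m+n)$ altogether and still reaches the required bound; so your appeal to the uniform positivity of $c_{t,\epsilon}$ is unnecessary here. On the other hand, your Bedford--Taylor argument for the uniformity of $c_{t,\epsilon,s}\to 0$ is addressing a point the paper asserts without justification, and is the right concern to raise given that $\tilde\phi$ has positive Lelong number along $\pi_1^{-1}(\cS_{\hat Z})\cup\pi_2^{-1}(\cS_{\hat Z})$.
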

 
 \begin{proof} Direct local calculations show that there exists $A>0$ such that for $s\in (0, \epsilon_Z)$.
 $$A \theta+ \ddbar \log \psi_s >0. $$
 Therefore by definition of $\tilde \phi$ and $\chi_{\cZ, s}$, for any $\varepsilon>0$, there exists sufficiently small $s>0$ such that for any $s\in (0, \epsilon_Z)$, $t\in (0, \epsilon_Z)$ and $\epsilon\in (0, \epsilon_Z)$
 $$\chi_{\cZ, s} \geq (1-\varepsilon) \chi_\cZ$$
 and
 $$F_{\cZ, s} \geq (1-\varepsilon)^n -1. $$
 The lemma then immediately follows by choosing sufficiently small $\varepsilon>0$. 
 
 \end{proof}
 
 We will fix $s>0$ once for all from Lemma \ref{flb}. 
 
 \begin{lemma} For any  $s\in (0, 1)$, $\epsilon\in (0, \epsilon_Z)$ and $t\in (0,1)$, there exists a unique smooth K\"ahler form $\omega_{\cZ, s}=\omega_{\cZ, s}(t, \epsilon)$ that solves equation (\ref{doubleeqn}) and satisfies
 \begin{equation} \label{doubleeqn3}
(m+n) (\omega_{\cZ, s})^{2m-1} - (2m-1) (\omega_{\cZ, s})^{2m-2} \wedge  \chi_\cZ>0.
 \end{equation}

 \end{lemma}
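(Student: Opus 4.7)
The plan is to deduce this lemma as a direct application of Chen's Theorem \ref{chg} to the compact K\"ahler manifold $\cZ=\hat Z\times\hat Z$ of complex dimension $2m$, equipped with the reference K\"ahler form $\chi_\cZ$. Multiplying equation (\ref{doubleeqn}) by $(m+n)/(2m)$ rewrites it as
$$
\tfrac{m+n}{2m}(\omega_{\cZ,s})^{2m} \;=\; \chi_\cZ \wedge (\omega_{\cZ,s})^{2m-1} \;+\; \tfrac{m+n}{2m}\,F_{\cZ,s}\,(\chi_\cZ)^{2m},
$$
which is exactly Chen's twisted $J$-equation with ambient dimension $2m$, constant $c=(m+n)/(2m)$, and inhomogeneity $\tilde F=(m+n)F_{\cZ,s}/(2m)$. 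Under this identification Chen's subsolution positivity $nc\,\eta^{n-1}-(n-1)\chi\wedge \eta^{n-2}>0$ becomes precisely (\ref{doubleeqn3}), and a pointwise diagonalization identifies it with $\cP_\eta(\chi_\cZ)<m+n$.

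Two of Chen's three hypotheses are essentially already in hand. The required lower bound $\tilde F>-1/(2(2m)^{2m+1}c^{2m-1})$ translates, after absorbing the factor $(m+n)/(2m)$, into a bound on $F_{\cZ,s}$ that is implied by Lemma \ref{flb}. The integral identity
$$
\int_\cZ \tilde F\,\chi_\cZ^{2m} \;=\; c\int_\cZ(\omega_{\cZ,s})^{2m}-\int_\cZ\chi_\cZ\wedge(\omega_{\cZ,s})^{2m-1}
$$
is a purely cohomological check: expand $(\pi_1^*\hat\alpha+\pi_2^*\hat\beta)^{2m}$ and $\chi_\cZ\cdot(\pi_1^*\hat\alpha+\pi_2^*\hat\beta)^{2m-1}$ by the binomial formula, use that $(\pi_1^*\gamma)^j\wedge(\pi_2^*\gamma')^{2m-j}$ vanishes in cohomology unless $j=m$, then combine the defining relation (\ref{coneqn}) for $c_{t,\epsilon}$, the normalization of $c_{t,\epsilon,s}$, and the definition (\ref{fdef}) of $F_{\cZ,s}$.

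The heart of the proof is the subsolution. I would invoke the standing assumption Lemma \ref{s4cont} to produce a smooth K\"ahler solution $\hat\omega=\hat\omega(t,\epsilon)\in\hat\alpha$ of (\ref{332}) on $\hat Z$ satisfying $n\hat\omega^{m-1}-(m-1)\hat\omega^{m-2}\wedge\hat\chi>0$, equivalently $\cP_{\hat\omega}(\hat\chi)<n$. Then set
$$
\eta \;:=\; \pi_1^*\hat\omega + \pi_2^*\hat\chi \;\in\; \pi_1^*\hat\alpha+\pi_2^*\hat\beta.
$$
At any point of $\cZ$, diagonalizing simultaneously in each factor gives the $2m$ eigenvalues of $\chi_\cZ$ with respect to $\eta$ as $(\mu_1,\dots,\mu_m,1,\dots,1)$, where $\mu_1,\dots,\mu_m$ are the eigenvalues of $\hat\chi$ relative to $\hat\omega$ on the first factor and the $m$ ones arise from the second factor (since $\pi_2^*\eta=\pi_2^*\hat\chi$ there). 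Two cases arise in verifying $\cP_\eta(\chi_\cZ)<m+n$: omitting some $\mu_k$ yields $m+\sum_{j\neq k}\mu_j<m+n$ directly from $\cP_{\hat\omega}(\hat\chi)<n$; omitting one of the $1$'s yields $(m-1)+\tr_{\hat\omega}(\hat\chi)$, and tracing equation (\ref{332}) against $\hat\omega$ gives $\tr_{\hat\omega}(\hat\chi)=n-c_{t,\epsilon}\,\hat\chi^m/\hat\omega^m<n$ since $c_{t,\epsilon}>0$, so this second sum is strictly below $m+n-1<m+n$.

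The principal obstacle is precisely this subsolution construction: the candidate $\pi_1^*\hat\omega+\pi_2^*\hat\chi$ works because equation (\ref{332}) on $\hat Z$ delivers not only the pointwise subsolution inequality $\cP_{\hat\omega}(\hat\chi)<n$ (handling the ``omit $\mu_k$'' case) but also the strictly stronger trace bound $\tr_{\hat\omega}(\hat\chi)<n$ (handling the ``omit $1$'' case), the latter being extra information coming from the PDE itself and unavailable from the mere subsolution condition. With the subsolution in place, Theorem \ref{chg} yields the unique smooth K\"ahler $\omega_{\cZ,s}$ solving (\ref{doubleeqn}) and satisfying (\ref{doubleeqn3}); uniqueness is standard for complex Hessian equations of this convex type.
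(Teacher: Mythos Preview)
Your proposal is correct and follows essentially the same route as the paper: verify the cohomological compatibility of (\ref{doubleeqn}), invoke Lemma \ref{flb} for the lower bound on the inhomogeneity, and build the subsolution $\pi_1^*\hat\omega+\pi_2^*\hat\chi$ from the solution $\hat\omega$ of (\ref{332}) supplied by the standing assumption Lemma \ref{s4cont}, checking the two eigenvalue cases exactly as you do before applying Theorem \ref{chg}. One cosmetic point: by the paper's Definition of $\cP$, the subsolution inequality should be written $\cP_{\chi_\cZ}(\eta)<m+n$ rather than $\cP_\eta(\chi_\cZ)<m+n$ (the paper itself commits the same slip), but your eigenvalue computation is the correct one.
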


\begin{proof} First, we note that equation (\ref{doubleeqn}) is well-defined because
 $$\int_\cZ (\omega_{\cZ, s} )^{2m} = \begin{pmatrix} 2m\\ m \end{pmatrix}  \int_{\hat Z} \hat\alpha^m  \int_{\hat Z} \hat\beta^m $$
 and by (\ref{coneqn}), 
 \begin{eqnarray*}
 && \int_\cZ \left( (\omega_{\cZ, s})^{2m-1}\wedge \left( \frac{2m}{m+n}\right) \chi_{\cZ} + F_{\cZ, s} (\chi_{\cZ})^{2m}\right) \\
 &=& \frac{2m}{m+n} \begin{pmatrix} 2m-1\\ m \end{pmatrix} \left( \int_{\hat Z} \hat \alpha^m + \int_{\hat Z}\hat\alpha^{m-1}\cdot \hat\beta\right) \int_{\hat Z}\hat\beta^m + \frac{ c_{t,\epsilon} }{m+n} \begin{pmatrix} 2m\\ m \end{pmatrix} \left( \int_{\hat Z} \hat \beta^m\right)^2 \\
&=& (m+n)^{-1} \begin{pmatrix} 2m\\ m \end{pmatrix}  \left( m \int_{\hat Z} \hat \alpha^m + m \int_{\hat Z}\hat\alpha^{m-1}\cdot \hat\beta +  c_{t, \epsilon} \int_{\hat Z} \hat \beta^m\right) \int_{\hat Z}\hat\beta^m  \\
&=& \begin{pmatrix} 2m\\ m \end{pmatrix} \int_{\hat Z} \hat\alpha^m  \int_{\hat Z} \hat\beta^m.
 \end{eqnarray*}

Since we assume Lemma \ref{s4cont}, equation (\ref{332}) can be solved for all $t\in (0,1)$ and  $\epsilon \in (0, \epsilon_Z)$. We let $\hat\omega$ be the solution of (\ref{332}) for $t\in (0,1)$ and $\epsilon \in (0, \epsilon_Z)$. Then on $\hat Z$, we have
$$n \hat\omega^{m-1} - (m-1) \hat\omega^{m-2}\wedge \hat\chi>0$$
and
$$tr_{\hat\omega}(\hat\chi) < n. $$
We define 
$$\hat\omega_\cZ = \pi_1^* ~\hat\omega + \pi_2^*~ \hat\chi. $$
Then
\begin{eqnarray*}
&& \cP_{\hat\omega_\cZ} (\chi_\cZ) \\
&\leq& \max\left(  \cP_{\hat\omega }(\hat\chi)|_{\hat Z} + tr_{\hat\chi}(\hat\chi)|_{\hat Z}, tr_{\hat\omega}(\hat\chi)|_{\hat Z} +m-1\right) \\\
&<& n+m
\end{eqnarray*}
or equivalently
\begin{equation}\label{subforz}
(m+n) (\hat\omega_\cZ)^{2m-1} - (2m-1) (\hat\omega_\cZ)^{2m-2}\wedge\chi_\cZ>0.
\end{equation}
Therefore $\hat\omega_\cZ$ is a subsolution for equation (\ref{doubleeqn}) and we can now directly apply Theorem \ref{chg} for the twisted $J$-equation (\ref{doubleeqn})  by combining Lemma \ref{flb} and (\ref{subforz}) to obtain a unique solution of equation (\ref{doubleeqn}) satisfying (\ref{doubleeqn3}). This completes the proof of the lemma.

 \end{proof}

 We define $\omega_{s}=\omega_s(t, \epsilon)$ as the push-forward of $(\omega_{\cZ, s})^m \wedge \pi_2^* \hat\chi$ by $\pi_1$ as the following for any $z\in \hat Z$ 
 \begin{eqnarray}
 \omega_{s}(z)  &=& \cV^{-1} (\pi_1)_* \left( (\omega_{\cZ, s})^m \wedge \pi_2^*  \hat\chi  \right) (z)\\
 &=& \cV^{-1}   \int_{\pi_1^{-1}(z)} (\omega_{\cZ, s})^m \wedge \pi_2^*  ~\hat\chi   \nonumber,
 \end{eqnarray}
  where $$\cV =\cV (t, \epsilon)=  m  \hat\beta^m \cdot \hat Z.$$

  \begin{lemma} For any $s\in(0, 1)$, $t\in (0,1)$ and $\epsilon\in (0, \epsilon_Z)$, 
  $$\omega_{s } \in \hat\alpha .$$

  \end{lemma}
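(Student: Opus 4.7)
The plan is to verify this cohomologically by unwinding the fiber integral. Since $\omega_{\cZ,s}$ is smooth K\"ahler on $\cZ$, the form $\omega_s = \cV^{-1}(\pi_1)_*\bigl((\omega_{\cZ,s})^m \wedge \pi_2^*\hat\chi\bigr)$ is a smooth real $(1,1)$-form on $\hat Z$: the integrand has bidegree $(m+1,m+1)$, the fibers $\pi_1^{-1}(z)=\{z\}\times \hat Z$ have real dimension $2m$, and fiber integration lowers degree by $2m$. So only its cohomology class needs identification.

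The class of $\omega_{\cZ,s}$ is $\pi_1^*\hat\alpha + \pi_2^*\hat\beta$, hence in cohomology
\begin{equation*}
[\omega_{\cZ,s}]^m \wedge [\pi_2^*\hat\chi] = \sum_{k=0}^{m}\binom{m}{k}(\pi_1^*\hat\alpha)^k \wedge (\pi_2^*\hat\beta)^{m-k+1}.
\end{equation*}
Next I apply fiber integration along $\pi_1$. By the projection formula, $(\pi_1)_*\bigl((\pi_1^*\hat\alpha)^k \wedge (\pi_2^*\hat\beta)^{m-k+1}\bigr) = \hat\alpha^k \cdot (\pi_1)_*(\pi_2^*\hat\beta)^{m-k+1}$, and the fiber integral $(\pi_1)_*(\pi_2^*\hat\beta)^{m-k+1}$ vanishes unless the $\pi_2$-degree matches the fiber dimension $m$, i.e. unless $m-k+1 = m$, which forces $k=1$. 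In that case the fiber integral equals the constant $\int_{\hat Z}\hat\beta^m = \hat\beta^m \cdot \hat Z$.

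Therefore the only surviving term is
\begin{equation*}
(\pi_1)_*\bigl((\omega_{\cZ,s})^m \wedge \pi_2^*\hat\chi\bigr) \equiv \binom{m}{1}\,(\hat\beta^m \cdot \hat Z)\,\hat\alpha = m(\hat\beta^m \cdot \hat Z)\,\hat\alpha
\end{equation*}
in $H^{1,1}(\hat Z,\R)$. Dividing by $\cV = m\,\hat\beta^m \cdot \hat Z$ gives $[\omega_s] = \hat\alpha$, as desired. The only subtle step is justifying the cohomological projection formula for the smooth proper submersion $\pi_1$, but this is standard (fiber integration is a chain map and commutes with pullback as in Stokes/de Rham), and no serious obstacle is expected. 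All other manipulations are formal bookkeeping of bidegrees.
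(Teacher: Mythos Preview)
Your proof is correct and follows essentially the same route as the paper: both compute the cohomology class of the pushforward $(\pi_1)_*\bigl((\omega_{\cZ,s})^m\wedge\pi_2^*\hat\chi\bigr)$ by expanding $(\pi_1^*\hat\alpha+\pi_2^*\hat\beta)^m\wedge\pi_2^*\hat\beta$ and observing that only the term with vertical degree $m$ survives the fiber integral. The paper's argument is just a terser version of your binomial-expansion-plus-projection-formula calculation.
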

  
  \begin{proof} It suffices to calculate the push-forward of the K\"ahler class as follows 
  \begin{eqnarray*}
 \omega_s &=& \cV^{-1} \int_{\pi_1^{-1}(z)} (\omega_{\cZ, s})^m \wedge \pi_2^*   \hat\chi  \\
&\in& \cV^{-1}  \int_{\pi_1^{-1}(z)} ( \pi_1^* \hat\alpha +  \pi_2^* \hat\beta )^m \wedge \pi_2^*\hat\beta   =    \hat\alpha.
  \end{eqnarray*}

  \end{proof}

 Since $ (\omega_{\cZ, s})^m$ is a positive closed $(m, m)$-form, $\omega_s$ is a K\"ahler form on $\hat Z$ and $(\omega_s)^m$ always converges weakly as $s \rightarrow 0$, after passing to a sequence. We then define 
 \begin{equation}
 \Theta=\Theta(t, \epsilon)=  \lim_{ s\rightarrow 0} (\omega_{\cZ, s})^m
 \end{equation}
for the corresponding convergent subsequence.

\begin{lemma} \label{s5mass} There exists $\delta_0>0$ such that for any $t\in (0,1)$ and $\epsilon\in (0, \epsilon_Z)$, 
$$\Theta > 100\cV \delta_0 [\Delta],$$
where $[\Delta]$ is the current of integration along $\Delta$.
\end{lemma}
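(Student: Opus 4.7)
The plan is to adapt the mass concentration scheme of Chen \cite{ChG}, itself based on Demailly-Paun \cite{DP}, to the present degenerate setting. The mechanism is that the twisted $J$-equation (\ref{doubleeqn}) couples the solution $\omega_{\cZ, s}$ to the source $\chi_{\cZ, s}^{2m}$, and the logarithmic pole of $\psi_s$ along $\Delta$ forces the $m$-th power of the solution to carry a proportional $\Delta$-mass in the weak limit $s \to 0$.

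The starting point is the rearrangement of (\ref{doubleeqn}) using the definition (\ref{fdef}) of $F_{\cZ, s}$:
$$\omega_{\cZ, s}^{2m} - \chi_{\cZ, s}^{2m} \,=\, \frac{2m}{m+n}\, \omega_{\cZ, s}^{2m-1} \wedge \chi_\cZ - \Bigl(1 + c_{t,\epsilon, s} - \frac{c_{t, \epsilon}}{m+n}\Bigr) \chi_\cZ^{2m}.$$
The pointwise trace inequality $\tr_{\omega_{\cZ, s}}(\chi_\cZ) \leq m+n$ coming from (\ref{doubleeqn2}) gives $\frac{2m}{m+n}\omega_{\cZ, s}^{2m-1}\wedge \chi_\cZ \leq \omega_{\cZ, s}^{2m}$, which combined with the identity above produces an inequality of the form $\omega_{\cZ, s}^{2m} \geq \lambda\, \chi_{\cZ, s}^{2m} - \mu\, \chi_\cZ^{2m}$ for absolute constants $\lambda, \mu > 0$. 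Integrating against a $(0,0)$ cut-off function $\eta_\rho$ supported in a $\rho$-tube $U_\rho$ of $\Delta$, and using the Lelong-King formula to estimate $\int_{U_\rho} \chi_{\cZ, s}^{2m} \cdot \eta_\rho$ from below via the codimension-$m$ Monge-Amp\`ere mass of $\psi_s$, I expect an $s$-uniform lower bound $\int_{U_\rho} (\omega_{\cZ, s})^{2m}\cdot\eta_\rho \geq C_0 > 0$, with $C_0$ depending only on cohomological data.

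To pass from this $2m$-volume estimate to the desired $(m,m)$-mass bound on $\Theta = \lim_s (\omega_{\cZ, s})^m$, I would apply the Maclaurin-type inequality $\omega^m \wedge \chi^m \geq (\omega^{2m})^{1/2}(\chi^{2m})^{1/2}$ for positive $(1,1)$ forms (interpreted as ratios against a fixed volume form), transferring the $2m$-mass to a lower bound on $\int_{U_\rho} (\omega_{\cZ, s})^m \wedge \chi_\cZ^m \cdot \eta_\rho$. Weak compactness of $\{(\omega_{\cZ, s})^m\}$ in the fixed class $(\pi_1^*\hat\alpha + \pi_2^*\hat\beta)^m$ yields a subsequential limit $\Theta$, and lower semi-continuity of mass against positive test forms gives $\Theta \geq 100\cV \delta_0 [\Delta]$. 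The uniformity of $\delta_0$ in $(t,\epsilon) \in (0,1) \times (0, \epsilon_Z)$ follows from the explicit $(t,\epsilon)$-dependence of the constants entering Lemmas \ref{320} and \ref{flb}, all of which remain bounded away from zero on this range.

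The main obstacle is the first step: the $s$-uniform lower bound on $\int_{U_\rho(\Delta)} \chi_{\cZ, s}^{2m}$. Since the singular term $s^2 \ddbar \psi_s$ vanishes uniformly as $s \to 0$, the mass concentration cannot be read from a naive pointwise comparison; it must instead be extracted via the scaling symmetry of $\log(|f|^2 + s)$ near $\{f = 0\}$, which gives $(\ddbar \log(|f|^2 + s))^m$ the full Monge-Amp\`ere mass on local charts around $\Delta$. Carrying this through in the global setting requires a careful integration-by-parts identity, which is the technical heart of Chen's argument; in the present degenerate setting one must additionally control the interaction with the singularities of $\tilde\phi$ along the exceptional locus $\cS_{\hat Z}$, uniformly in $(t, \epsilon) \in (0, 1) \times (0, \epsilon_Z)$.
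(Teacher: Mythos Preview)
Your overall strategy matches the paper's: obtain a uniform volume lower bound $\omega_{\cZ,s}^{2m} \geq c_0\,\chi_{\cZ,s}^{2m}$ from the twisted $J$-equation, then invoke the mass-concentration argument of Demailly--Paun (Proposition 2.6 in \cite{DP}) to pass to $\Theta \geq 100\cV\delta_0[\Delta]$. The paper's proof of the second step is in fact nothing more than a citation of \cite{DP}, so your sketch of the Lelong--King and slicing mechanism is, if anything, more explicit than the paper's.

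However, your first step does not work as written. From the rearranged identity
\[
\omega_{\cZ,s}^{2m} \;=\; \frac{2m}{m+n}\,\omega_{\cZ,s}^{2m-1}\wedge\chi_\cZ \;+\; \chi_{\cZ,s}^{2m} \;-\; \Bigl(1 + c_{t,\epsilon,s} - \tfrac{c_{t,\epsilon}}{m+n}\Bigr)\chi_\cZ^{2m},
\]
you propose to use the \emph{upper} bound $\frac{2m}{m+n}\omega_{\cZ,s}^{2m-1}\wedge\chi_\cZ \leq \omega_{\cZ,s}^{2m}$ coming from $\tr_{\omega_{\cZ,s}}(\chi_\cZ) \leq m+n$. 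But substituting an upper bound for the cross term into an \emph{equality} for $\omega_{\cZ,s}^{2m}$ yields only the trivial statement $\chi_{\cZ,s}^{2m} \geq C\chi_\cZ^{2m}$; it gives no lower bound on $\omega_{\cZ,s}^{2m}$ at all. (Separately, the trace inequality itself fails where $F_{\cZ,s} < 0$, which Lemma~\ref{flb} permits.) The paper instead uses the \emph{lower} bound $(m+n)\,\omega_{\cZ,s} > \chi_\cZ$, guaranteed by Theorem~\ref{chg}, to get $\omega_{\cZ,s}^{2m-1}\wedge\chi_\cZ \geq (m+n)^{-(2m-1)}\chi_\cZ^{2m}$, and hence
\[
\omega_{\cZ,s}^{2m} \;\geq\; \frac{2m}{(m+n)^{2m}}\chi_\cZ^{2m} + \chi_{\cZ,s}^{2m} - \Bigl(1 + c_{t,\epsilon,s} - \tfrac{c_{t,\epsilon}}{m+n}\Bigr)\chi_\cZ^{2m} \;\geq\; \frac{m}{(m+n)^{2m}}\,\chi_{\cZ,s}^{2m},
\]
using $\chi_\cZ \leq (1-\varepsilon)^{-1}\chi_{\cZ,s}$ from Lemma~\ref{flb} and $c_{t,\epsilon,s}\to 0$. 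Once you reverse the direction of the bound on the cross term, the rest of your outline goes through.
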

\begin{proof} Since $\omega_{\cZ, s}$ is the solution of (\ref{doubleeqn}), 
$$(n+m) \omega_{\cZ, s} > \chi_Z$$
by  Theorem \ref{chg}.  Then for some fixed sufficiently small $s>0$
\begin{eqnarray*}
 (\omega_{\cZ, s})^{2m} & \geq &\frac{2m}{(m+n)^{2m}} \chi_Z^{2m} +(\chi_{\cZ, s})^{2m} - (1+c_{t, \epsilon, s}- c_{t, \epsilon}) (\chi_\cZ)^{2m}\\
 &\geq & \frac{m}{(m+n)^{2m}} (\chi_{\cZ, s})^{2m}
 \end{eqnarray*}
 since $\lim_{s\rightarrow 0} c_{t, \epsilon, s}=0$. 
$\chi_\cZ$ is bounded above by a multiple of $\chi_{\cZ, s}$. 
The proof follows by the same argument in the proof of Proposition 2.6 in \cite{DP} by the fact that

\end{proof}

We remark that although $\Theta$ might depend on the choice of sequence $s_j$, its lower bound is independent of such a sequence and $t\in (0,1)$, $\epsilon\in(0, \epsilon_Z)$. 

Locally we let $z=(z_1, ..., z_m)$ be the coordinates for $\hat Z$ from $\pi_1$ and $w=(w_1, ..., w_m)$ for $\hat Z$ from $\pi_2$. Then we write
\begin{equation}
\omega_{\cZ, s} = \omega_H + \omega_M + \omega_{\overline M^T} + \omega_V,
\end{equation}
where $\omega_H$ is the horizontal component, i.e., in $\sqrt{-1} dz_i\wedge d\bar z_j$, $\omega_V$ is the vertical component in $\sqrt{-1} dw_i\wedge d\bar w_j$, $\omega_M$ and $\omega_{\overline M^T}$ are the off-diagonal or the mixed components in $\sqrt{-1} dz_i\wedge d\bar w_j$ or $\sqrt{-1} dw_i\wedge d\bar z_j$. We also let $\chi_H= \pi_1^* \hat\chi$ and $\chi_V= \pi_2^* \hat\chi$.
Our goal is to calculate
$$ \cP_{\hat\chi} (\omega_{s}).$$

\begin{lemma}  \label{s5l55}
At any point $z\in \hat Z$, for any $t\in (0,1)$, $\epsilon\in (0, \epsilon_Z)$ and $s\in (0, 1)$,  we have 
\begin{equation} 
\cP_{ \hat \chi } (\omega_{s}) < n .  
\end{equation}

\end{lemma}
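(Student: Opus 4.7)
The plan is to propagate the subsolution estimate $\cP_{\chi_\cZ}(\omega_{\cZ,s}) < m+n$ on $\cZ$ --- which is guaranteed by Theorem \ref{chg} together with the subsolution inequality \eqref{doubleeqn3} --- down to the push-forward $\omega_s$ on $\hat Z$, using Chen's Schur-complement inequality (Lemma \ref{s3con2}) fiberwise and then averaging. First I would note that $\cP_{\hat\chi}(\omega_s) < n$ at $z$ is equivalent to the pointwise $(m-1,m-1)$-positivity $n\omega_s^{m-1}(z) > (m-1)\omega_s^{m-2}(z)\wedge\hat\chi(z)$.

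Fix a fiber point $(z,w)\in\cZ$. Choosing holomorphic coordinates at $z$ and $w$ that simultaneously unitarily diagonalize $\hat\chi(z)$ and $\hat\chi(w)$, decompose $\omega_{\cZ,s} = \omega_H + \omega_V + \omega_M + \omega_{\overline M^T}$ as in the excerpt. Since $\chi_\cZ$ is then block-diagonal and corresponds to $I_{2m}$, Lemma \ref{s3con2} applied with $A=\omega_H$, $B=\omega_V$, $C=\omega_M$ transforms the subsolution bound on $\cZ$ into the pointwise estimate
$$\cP_{\hat\chi(z)}\bigl(\omega_H - \omega_M\omega_V^{-1}\omega_{\overline M^T}\bigr) \;<\; (m+n) - \tr_{\omega_V}\bigl(\hat\chi(w)\bigr) \quad\text{at every }(z,w)\in\cZ.$$

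The next step is the push-forward identity. Expanding $(\omega_{\cZ,s})^m\wedge\pi_2^*\hat\chi$ in the block decomposition and isolating the $z$-bidegree-$(1,1)$, $w$-bidegree-$(m,m)$ component (the only part that survives $(\pi_1)_*$), exactly two multinomial contributions appear, $m\,\omega_H\wedge\omega_V^{m-1}\wedge\hat\chi$ and $m(m-1)\,\omega_M\wedge\omega_{\overline M^T}\wedge\omega_V^{m-2}\wedge\hat\chi$, and a direct multilinear-algebra calculation recombines them into
$$\omega_s(z) \;=\; \frac{1}{\cV}\int_{\pi_1^{-1}(z)} \bigl(\omega_H - \omega_M\omega_V^{-1}\omega_{\overline M^T}\bigr)\,\tr_{\omega_V}\!\bigl(\hat\chi(w)\bigr)\,(\omega_V)^m.$$
Since $\omega_{\cZ,s}|_{\text{fiber}}\in \hat\beta$, the cohomological Fubini identity gives $\int_{\pi_1^{-1}(z)}\tr_{\omega_V}(\hat\chi)(\omega_V)^m = m\int\omega_V^{m-1}\wedge\hat\chi = m\,\hat\beta^m\!\cdot\hat Z = \cV$, so $\omega_s(z)$ is the weighted average of the Schur complement against the probability measure $d\mu(w) = \tr_{\omega_V}(\hat\chi)(\omega_V)^m/\cV$ on the fiber.

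To conclude, the convexity of $\cP$ in its argument (Lemma \ref{s3con1}) applied to the weighted average, combined with the pointwise bound above, gives
$$\cP_{\hat\chi(z)}(\omega_s(z)) \;\leq\; \int_{\pi_1^{-1}(z)}\cP_{\hat\chi(z)}\bigl(\omega_H - \omega_M\omega_V^{-1}\omega_{\overline M^T}\bigr)\,d\mu \;<\; (m+n) - \mathbb{E}_\mu[\tr_{\omega_V}(\hat\chi)],$$
and Cauchy--Schwarz yields $\mathbb{E}_\mu[\tr_{\omega_V}(\hat\chi)] = \cV^{-1}\int\tr_{\omega_V}(\hat\chi)^2(\omega_V)^m \geq (\int\tr_{\omega_V}(\hat\chi)(\omega_V)^m)^2/(\cV\int(\omega_V)^m) = \cV^2/(\cV\cdot\cV/m) = m$, using $\int_{\text{fiber}}(\omega_V)^m = \hat\beta^m\!\cdot\hat Z = \cV/m$. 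Hence $\cP_{\hat\chi(z)}(\omega_s(z)) < n$, with the strict inequality inherited uniformly in $s,t,\epsilon$ from the strict inequality on $\cZ$ at every fiber point. The main obstacle is the push-forward identity in the third paragraph: the combinatorial and sign-tracking verification that $m\omega_H\wedge\omega_V^{m-1}\wedge\hat\chi$ and $m(m-1)\omega_M\wedge\omega_{\overline M^T}\wedge\omega_V^{m-2}\wedge\hat\chi$ really do recombine into the advertised Schur-complement-weighted volume form is the precise point where the algebraic block-matrix picture of Lemma \ref{s3con2} meets the form-theoretic push-forward, and everything else (convexity, Cauchy--Schwarz, uniformity) is then essentially automatic.
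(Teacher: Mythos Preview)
Your approach is essentially the same as the paper's, and all the key ingredients --- Lemma \ref{s3con2} applied fiberwise, convexity of $\cP$ (Lemma \ref{s3con1}) for the averaging, and Cauchy--Schwarz on the fiber to extract $m$ from $(m+n)$ --- appear in the same order and with the same purpose.

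The one genuine issue is precisely where you flag it: the push-forward formula in your third paragraph is \emph{not} an identity. Computing the integrand exactly (as the paper does) gives the $(1,1)$-form
$$\Gamma \;=\; \tr_{\omega_V}(\chi_V)\,\omega_H \;+\; \tr_{\omega_V}\bigl(\chi_V\wedge\omega_M\wedge\omega_{\overline M^T}\bigr),$$
and in coordinates where $\omega_V=\mathrm{diag}(\lambda_1,\dots,\lambda_m)$, $\chi_V=I$, the cross term has entries $-\sum_{k\neq l}(\lambda_k\lambda_l)^{-1}\eta_{i\bar l}\overline{\eta_{j\bar l}}$, with the restriction $k\neq l$. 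By contrast, $\tr_{\omega_V}(\chi_V)$ times the Schur complement $\omega_H-\omega_M\omega_V^{-1}\omega_{\overline M^T}$ produces $-\sum_{k,l}(\lambda_k\lambda_l)^{-1}\eta_{i\bar l}\overline{\eta_{j\bar l}}$ with no such restriction. The discrepancy is $\sum_l \lambda_l^{-2}\eta_{i\bar l}\overline{\eta_{j\bar l}}$, a positive semi-definite matrix, so one only has the inequality
$$\Gamma \;\geq\; \tr_{\omega_V}(\chi_V)\bigl(\omega_H-\omega_M\omega_V^{-1}\omega_{\overline M^T}\bigr).$$
This is harmless for your argument: $\cP_C(\cdot)$ is monotone \emph{decreasing} (if $A_1\geq A_2>0$ then $(A_1|_V)^{-1}\leq(A_2|_V)^{-1}$ on every hyperplane $V$), so the inequality points the right way and your convexity-plus-Cauchy--Schwarz conclusion goes through unchanged. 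The paper simply computes $\Gamma$ explicitly and records this matrix inequality rather than asserting an identity.
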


\begin{proof} 

Direct calculations show that
\begin{eqnarray*}
 \omega_{ s}
&=& \cV^{-1}  (\pi_1)_* \left( (\omega_H + \omega_V + \omega_M + \omega_{\overline M^T} )^m \wedge \chi_V \right) \\
&=& \cV^{-1} (\pi_1)_* \left( m\omega_H \wedge \omega_V^{m-1} \wedge \chi_V + m(m-1)\omega_M \wedge \omega_{\overline{M}^T}\wedge \omega_V^{m-2} \wedge \chi_V  \right)\\
&= & \cV^{-1} (\pi_1)_* \left\{  \left( tr_{\omega_V}(\chi_V) \omega_H + tr_{\omega_V} \left(\chi_V  \wedge \omega_M \wedge \omega_{\overline{M}^T}\right) \right)\wedge \omega_V^m   \right\}. 
\end{eqnarray*}
At any point $p\in \pi_1^{-1}(z)$ for $z\in \hat Z$, we can assume 
$$\chi_V= \sqrt{-1}\sum_{i, j=1}^m  \delta_{ij}  dw_i\wedge d\bar w_j, ~\omega_V = \sqrt{-1} \sum_{i, j=1}^m \delta_{ij} \lambda_j dw_i\wedge d\bar w_j,$$  
$$\chi_H= \sqrt{-1}\sum_{i, j=1}^m  \delta_{ij}  dz_i\wedge d\bar z_j, ~\omega_H = \sqrt{-1} \sum_{i, j=1}^m \delta_{ij}   \mu_j dz_i \wedge d\bar z_j $$
and
$$\omega_M = \sqrt{-1} \sum_{i, j=1}^m \eta_{i\bar j} dz_i \wedge d\bar w_j,$$
where $(\delta_{ij})$ is the identity matrix. Then  $(1,1)$-form 
\begin{equation}\label{s5gamma} 
\Gamma = tr_{\omega_V}(\chi_V) \omega_H + tr_{\omega_V} \left(\chi_V  \wedge \omega_M \wedge \omega_{\overline{M}^T}\right)
\end{equation} 
 corresponds to a positive hermitian matrix of size $m\times m$ and the coefficients for $\sqrt{-1}dz_i\wedge d\bar z_j$ is given by 
$$\Gamma_{ij}= \left(\sum_{k=1}^m \lambda_k^{-1} \right) \delta_{ij} \mu_i    - \sum_{k, l=1, l \neq k}^m( \lambda_k \lambda_l)^{-1}\eta_{i\bar l} \eta_{l \bar j}.$$
Then the matrix $\Gamma$ of size $m\times m$ is bounded below by the following calculations.
\begin{eqnarray*}
\Gamma &= &\left[  \left(\sum_{k=1}^m \lambda_k^{-1} \right) \delta_{ij} \mu_i    - \sum_{k, l=1}^m( \lambda_k \lambda_l)^{-1}\eta_{i\bar l} \eta_{l \bar j} \right]_{i, j}+ \left[ \sum_{l=1}^m(  \lambda_l^2)^{-1}\eta_{i\bar l} \eta_{l \bar j} \right]_{i, j}\\  
&\geq & \left[ \left(\sum_{k=1}^m \lambda_k^{-1} \right) \delta_{ij} \mu_i    - \sum_{k, l=1}^m( \lambda_k \lambda_l)^{-1}\eta_{i\bar l} \eta_{l \bar j} \right]_{ij}\\  
&= &\left(\sum_{k=1}^m \lambda_k^{-1} \right) \left( \delta_{ij} \mu_i    - \sum_{l=1}^m  \lambda_l^{-1}\eta_{i\bar l} \eta_{l \bar j} \right).
 \end{eqnarray*}
We can apply Lemma \ref{s3con2} and 
\begin{eqnarray*}
&&\cP_{  \pi_1^* \hat \chi } ( \left( tr_{\omega_V}(\chi_V)\right)^{-1} \Gamma ) \\
&\leq&  \cP_{\chi_\cZ} ( \omega_{\cZ, s}) - tr_{\omega_V}(\chi_V) \\
&<&   (m+n) - tr_{\omega_V}(\chi_V)  ,
\end{eqnarray*}
where $\cP_{ \pi_1^* \hat \chi } ( \Gamma )$ is taken on the horizontal $m$-dimensional space $\hat Z$ for components in $\sqrt{-1} dz_i\wedge d\bar z_j$. 
Since $$  \omega_{s} = \cV^{-1} ( \pi_1)_*\left( \Gamma \wedge \omega_V^m\right), ~  \cV^{-1}    \int_{\pi_1^{-1}(z)}  tr_{\omega_V}(\chi_V)  \omega_V^m=1, $$
by the convexity of $\cP $ operator for fixed $z\in \hat Z$, we have
\begin{eqnarray*}
&&\cP_{\hat\chi} (\omega_{s}) \\
&= &  \cP_{\hat\chi} \left(   \cV^{-1} \int_{\pi_1^{-1}(z)} \Gamma \wedge \omega_V^m\right)  \\
&\leq & \cV^{-1}    \int_{\pi_1^{-1}(z)}  tr_{\omega_V}(\chi_V)\cP_{\hat\chi} \left( \ \left( tr_{\omega_V}(\chi_V)\right)^{-1}\Gamma\right) \wedge \omega_V^m  \\
&<& \cV^{-1}\left(  (m+n) \int_{\pi_1^{-1}(z)}   tr_{\omega_V}(\chi_V) \omega_V^m -  \int_{\pi_1^{-1}(z)}   \left( tr_{\omega_V}(\chi_V) \right)^2 \omega_V^m \right)  \\
&\leq & (m+n) - \cV^{-1} \frac{ \left( \int_{\pi_1^{-1}(z)}   tr_{\omega_V}(\chi_V) \omega_V^m\right)^2}{   \int_{\pi_1^{-1}(z)}    \omega_V^m }\\
&=& n.
\end{eqnarray*} 

\end{proof}

Let $\Delta_\eta$ be the $\eta$-neighborhood of $\Delta$ in $\cZ$. 
We define the following $(1,1)$-forms on $\hat Z$
\begin{equation}
\omega_{s, \eta} (z)=\cV^{-1} \int_{\pi_1^{-1}(z)\cap \Delta_\eta} (\omega_{\cZ, s})^m \wedge \chi_V,
\end{equation}
\begin{equation}
\omega'_{s, \eta} (z)= \cV^{-1} \int_{\pi_1^{-1}(z)\cap \Delta_\eta}   \chi_\cZ \wedge (\omega_V)^{m-1} \wedge \chi_V
\end{equation}
and
\begin{eqnarray}
\Omega_{s, \eta} (z) &=&\omega_{s} - \omega_{s, \eta} +   \omega'_{s, \eta} \nonumber
\\
&=& \cV^{-1} \left( \int_{\pi_1^{-1}(z)\setminus \Delta_\eta} (\omega_{\cZ, s})^m \wedge \chi_V
+      \int_{\pi_1^{-1}(z)\cap\Delta_\eta}  \chi_\cZ \wedge (\omega_V)^{m-1} \wedge \chi_V\right) . 
\end{eqnarray}

\begin{lemma} \label{s5l56} Under the same assumption of Lemma \ref{s5l55}, we have
\begin{equation}
\cP_{\hat\chi} (\Omega_{s, \eta}) \leq  n + m\cV^{-1}  \int_{\pi_1^{-1}(z)\cap \Delta_\eta} tr_{\omega_V}(\chi_V) (\omega_V)^m ,  
\end{equation}
\end{lemma}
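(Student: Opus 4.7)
The plan is to adapt the convexity argument from the proof of Lemma \ref{s5l55}, splitting the fiber integral according to its support inside and outside $\Delta_\eta$. First I would simplify $\omega'_{s,\eta}$: since $\chi_V^2\wedge \omega_V^{m-1}$ has vertical degree $m+1$ on an $m$-dimensional fiber, it vanishes, so $\chi_\cZ \wedge \omega_V^{m-1}\wedge\chi_V = \chi_H\wedge \omega_V^{m-1}\wedge\chi_V$. The pointwise identity $\omega_V^{m-1}\wedge \chi_V = (tr_{\omega_V}(\chi_V)/m)\omega_V^m$ then lets me pull out the horizontal factor $\chi_H=\hat\chi$, yielding $\omega'_{s,\eta} = (a/m)\hat\chi$, where $a = \cV^{-1}\int_{\pi_1^{-1}(z)\cap \Delta_\eta}tr_{\omega_V}(\chi_V)\omega_V^m$ is precisely the quantity appearing on the right-hand side of the bound.

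Next, recalling from the expansion (\ref{s5gamma}) in the proof of Lemma \ref{s5l55} that $\omega_s - \omega_{s,\eta} = \cV^{-1}\int_{\pi_1^{-1}(z)\setminus\Delta_\eta}\Gamma\wedge\omega_V^m$, I would view $\Omega_{s,\eta}$ as an expectation
\[
\Omega_{s,\eta} = \int_{\pi_1^{-1}(z)\setminus \Delta_\eta}\frac{\Gamma}{tr_{\omega_V}(\chi_V)}\,d\mu + a\cdot \frac{\hat\chi}{m},
\]
where $d\mu = \cV^{-1}tr_{\omega_V}(\chi_V)\omega_V^m$ is the probability measure on the fiber used in Lemma \ref{s5l55}, with total mass $1-a$ restricted to $\pi_1^{-1}(z)\setminus\Delta_\eta$ and an atomic component of mass $a$ supporting the value $\hat\chi/m$. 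Applying Lemma \ref{s3con1} (convexity of $\cP_{\hat\chi}$) to this convex combination, and inheriting on $\pi_1^{-1}(z)\setminus\Delta_\eta$ the Schur-complement estimate $\cP_{\hat\chi}(\Gamma/tr_{\omega_V}(\chi_V)) < (m+n)-tr_{\omega_V}(\chi_V)$ (established in Lemma \ref{s5l55} via Lemma \ref{s3con2} together with the subsolution condition $\cP_{\chi_\cZ}(\omega_{\cZ,s}) < m+n$), produces an upper bound consisting of the truncated integral $\int_{\pi_1^{-1}(z)\setminus\Delta_\eta}((m+n)-tr_{\omega_V}(\chi_V))\,d\mu$ plus the atomic contribution proportional to $a$.

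The main obstacle is the bookkeeping in the last step: the atomic term is a priori of order $a$ but with a combinatorial constant depending on $m$, and it must be offset against the quadratic Cauchy-Schwarz correction from the $\pi_1^{-1}(z)\setminus\Delta_\eta$ integral in a way that recovers precisely the linear error $ma$. To handle this I would apply Cauchy-Schwarz to $\cV^{-1}\int_{\pi_1^{-1}(z)\setminus\Delta_\eta}tr_{\omega_V}(\chi_V)^2\omega_V^m$, exactly as in Lemma \ref{s5l55} but now on the reduced domain, tracking both $a$ and the auxiliary mass $\cV^{-1}\int_{\pi_1^{-1}(z)\cap \Delta_\eta}\omega_V^m$; then invoke the identity $\omega_V^{m-1}\wedge\chi_V = (tr_{\omega_V}(\chi_V)/m)\omega_V^m$ once more to relate these two quantities and close the algebra, producing the final bound $n+m\cV^{-1}\int_{\pi_1^{-1}(z)\cap \Delta_\eta}tr_{\omega_V}(\chi_V)\omega_V^m$.
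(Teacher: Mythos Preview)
Your approach is the paper's: rewrite $\Omega_{s,\eta}=\int_{\pi_1^{-1}(z)\setminus\Delta_\eta}\bigl(\Gamma/tr_{\omega_V}(\chi_V)\bigr)\,d\mu+\int_{\pi_1^{-1}(z)\cap\Delta_\eta}(\hat\chi/m)\,d\mu$ against the probability measure $d\mu=\cV^{-1}tr_{\omega_V}(\chi_V)\,\omega_V^m$, apply Jensen together with the Schur-complement bound $\cP_{\hat\chi}(\Gamma/tr)\le (m+n)-tr_{\omega_V}(\chi_V)$ from Lemma~\ref{s5l55}, and close with Cauchy--Schwarz. The one adjustment is in your last step: the paper applies Cauchy--Schwarz over the \emph{entire} fiber (so that $\cV^{-1}\int_{\pi_1^{-1}(z)} (tr_{\omega_V}\chi_V)^2\omega_V^m\ge m$) and then bounds the residual $\int_{\cap\Delta_\eta}(tr_{\omega_V}\chi_V)^2\omega_V^m$ using the pointwise inequality $tr_{\omega_V}(\chi_V)\le tr_{\omega_{\cZ,s}}(\chi_\cZ)\le m+n$; your Cauchy--Schwarz on the truncated region introduces the extra mass $b=\cV^{-1}\int_{\cap\Delta_\eta}\omega_V^m$, and the identity $\omega_V^{m-1}\wedge\chi_V=(tr_{\omega_V}(\chi_V)/m)\,\omega_V^m$ you invoke only rewrites $a$ and does not actually link $b$ to $a$, so the algebra will not close to $n+ma$ that way --- follow the full-fiber route instead.
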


\begin{proof} By defnition of $\Omega_{s, \eta}$ and $\Gamma$,  we have
$$ 
\Omega_{s,\eta} =  \cV^{-1} \int_{\pi_1^{-1}(z)\setminus \Delta\eta} \Gamma \wedge (\omega_V)^m  +   \cV^{-1}  \int_{ \pi_1^{-1}(z)\cap \Delta\eta} \chi_H \wedge (\omega_V)^{m-1}\wedge \chi_V.
$$

By Jensen's inequality and the calculations in the proof of Lemma \ref{s5l55},  we have 
\begin{eqnarray*}
&&\cP_{\hat\chi}(\Omega_{s, \eta}) \\
&\leq&   \cV^{-1} \int_{\pi_1^{-1}(z)\setminus \Delta\eta} \cP_{\chi_H} \left( tr_{\omega_V}(\chi_V)^{-1}\Gamma \right) tr_{\omega_V}(\chi_V)(\omega_V)^m \\
&&  + m \cV^{-1}  \int_{\pi_1^{-1}(z)\cap \Delta_\eta}\cP_{\chi_H}( m^{-1}\chi_H) \wedge (\omega_V)^{m-1}\wedge \chi_V\\
&\leq & \cV^{-1} \left( (m+n) \int_{\pi_1^{-1}(z)\setminus \Delta_\eta} tr_{\omega_V}(\chi_V)(\omega_V)^m -  \int_{\pi_1^{-1}(z)\setminus \Delta_\eta} \left( tr_{\omega_V}(\chi_V) \right)^2 (\omega_V)^m \right)\\
&& + m \cV^{-1}  \int_{\pi_1^{-1}(z)\cap \Delta_\eta} tr_{\omega_V}(\chi_V) (\omega_V)^m\\
&\leq&   n + \cV^{-1}  \int_{\pi_1^{-1}(z)\cap \Delta_\eta} \left( tr_{\omega_V}(\chi_V) \right)^2 (\omega_V)^m  + (m- m-n )\cV^{-1}  \int_{\pi_1^{-1}(z)\cap \Delta_\eta} tr_{\omega_V}(\chi_V) (\omega_V)^m\\
& \leq & n + m\cV^{-1}  \int_{\pi_1^{-1}(z)\cap \Delta_\eta} tr_{\omega_V}(\chi_V)(\omega_V)^m.
\end{eqnarray*}
The last inequality follows from the fact that $tr_{\omega_V} (\chi_V) \leq tr_{\omega_{\cZ, s}}(\chi_\cZ) \leq m+n$.

\end{proof}

Recall $\theta$ is a K\"ahler metric on $\hat Z$. Then we have the following lemma.
\begin{lemma} \label{s5l57} For any $\varepsilon>0$, $t\in (0,1)$ and $\epsilon\in (0, \epsilon_Z)$, there exist sufficiently small $\eta=\eta(\varepsilon, t, \epsilon)>0$ and $s_0=s_0(\varepsilon, t, \epsilon, \eta)\in (0,1)$ such that for all $s\in (0, s_0)$, 
\begin{equation}
 \int_{ \Delta_\eta}  (\omega_V)^{m-1} \wedge \chi_V \wedge \pi_1^*(\theta^m) < \varepsilon.
\end{equation}

\end{lemma}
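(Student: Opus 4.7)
The plan is to combine a uniform total mass computation with a weak compactness and fiberwise slicing argument, and conclude by contradiction. Write $\mu_s := (\omega_V)^{m-1}\wedge\chi_V\wedge\pi_1^*\theta^m$, a positive smooth top form on $\cZ$. Assume the conclusion fails for some $\varepsilon>0$; then for every pair $(\eta,s_0)$ there is some $s\in(0,s_0)$ with $\mu_s(\Delta_\eta)\geq\varepsilon$, so a diagonal choice produces sequences $\eta_k\downarrow 0$, $s_k\downarrow 0$ with $\mu_{s_k}(\Delta_{\eta_k})\geq\varepsilon$.

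For the uniform mass bound, on every fiber $\pi_1^{-1}(z)$ the horizontal and mixed components of $\omega_{\cZ,s}$ vanish, so $\omega_V$ restricts to $\omega_{\cZ,s}|_{\pi_1^{-1}(z)}$, a K\"ahler form in class $\hat\beta$ on $\hat Z$. Hence
\[
\int_{\pi_1^{-1}(z)} (\omega_V)^{m-1}\wedge\chi_V \;=\; \hat\beta^m\cdot\hat Z,
\]
a constant independent of $z$ and $s$. Fubini then yields $\int_\cZ\mu_s=(\hat\beta^m\cdot\hat Z)(\theta^m\cdot\hat Z)=:C_0$. Banach--Alaoglu gives a subsequential weak-$\ast$ limit $\mu_{s_k}\rightharpoonup\mu_0$ as positive Radon measures on $\cZ$ of total mass $C_0$. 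For fixed $\eta>0$ we have $\Delta_{\eta_k}\subset\Delta_\eta$ eventually, and applying the Portmanteau inequality to the closed set $\overline{\Delta_\eta}$,
\[
\mu_0\bigl(\overline{\Delta_\eta}\bigr)\;\geq\;\limsup_k\mu_{s_k}\bigl(\overline{\Delta_\eta}\bigr)\;\geq\;\varepsilon.
\]
Letting $\eta\downarrow 0$ and using $\bigcap_\eta\overline{\Delta_\eta}=\Delta$ gives $\mu_0(\Delta)\geq\varepsilon$.

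The crux is to show the opposite inequality $\mu_0(\Delta)=0$. Writing $\omega_V|_{\pi_1^{-1}(z)}=\hat\chi+\ddbar\phi_{s,z}$ for a $\hat\chi$-plurisubharmonic potential $\phi_{s,z}$ on the fiber $\hat Z$, the fiberwise form expands as
\[
(\omega_V)^{m-1}\wedge\chi_V\bigl|_{\pi_1^{-1}(z)} \;=\; \sum_{k=0}^{m-1}\binom{m-1}{k}\,\hat\chi^{m-k}\wedge(\ddbar\phi_{s,z})^k.
\]
For every $k\leq m-1$, the weak-$\ast$ limit of the Bedford--Taylor products $(\ddbar\phi_{s,z})^k$ along a subsequence is a positive closed $(k,k)$-current on the complex $m$-dimensional fiber $\hat Z$; by Siu--Demailly-type support bounds its support has complex dimension at least $m-k\geq 1$ and cannot collapse to the point $z\in \hat Z$. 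Wedging with the smooth positive form $\hat\chi^{m-k}$ preserves this dimensional bound, so the fiberwise weak limit $\nu_{0,z}$ of $(\omega_V)^{m-1}\wedge\chi_V|_{\pi_1^{-1}(z)}$ carries no atom at $z$. Disintegrating $\mu_0$ along $\pi_1$ and integrating against the fixed smooth measure $\theta^m$ on the base gives $\mu_0(\Delta)=0$, contradicting the previous inequality and proving the lemma.

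The main obstacle is this last step. Lemma \ref{s5mass} produces a Dirac-type concentration of $(\omega_{\cZ,s})^m$ on $\Delta$ which, after slicing, forces the fiberwise $m$-th power $(\omega_V|_{\pi_1^{-1}(z)})^m$ to develop an atom at $z$ as $s\to 0$. The lemma succeeds precisely because the integrand $(\omega_V)^{m-1}\wedge\chi_V$ sits one Monge--Amp\`ere degree below the top, where the dimensional obstruction on supports of positive $(k,k)$-currents with $k\leq m-1$ forbids atomic concentration once a smooth factor is wedged in. The principal technical work is to rigorously justify the commutation of fiberwise slicing with the global weak-$\ast$ limit, which requires Bedford--Taylor continuity along decreasing sequences of $\hat\chi$-PSH potentials with uniformly bounded Monge--Amp\`ere mass together with a measurable selection of convergent fiberwise subsequences.
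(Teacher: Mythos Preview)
Your contradiction setup and the total mass bound via fiber integration are fine and match the paper. The gap is in the ``crux'' step, and the paper closes it by a different, simpler route that you missed.

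The key observation you are missing is the pointwise identity
\[
(\omega_V)^{m-1}\wedge\chi_V\wedge\pi_1^*(\theta^m)\;=\;(\omega_{\cZ,s})^{m-1}\wedge\chi_V\wedge\pi_1^*(\theta^m)
\]
on $\cZ$: since $\pi_1^*(\theta^m)$ already saturates the horizontal directions and $\chi_V$ is purely vertical, only the purely vertical component of $(\omega_{\cZ,s})^{m-1}$ survives the wedge. This replaces the \emph{non-closed} hermitian block $(\omega_V)^{m-1}$ by the \emph{global closed positive} $(m-1,m-1)$-form $(\omega_{\cZ,s})^{m-1}$. One then passes to a weak limit $\cT$ of $(\omega_{\cZ,s_j})^{m-1}$ as a closed positive current on $\cZ$ and applies Siu's decomposition $\cT=\sum a_i[E_i]+\cR$ globally: each $E_i$ has $\dim E_i\le m+1$, so $\int_{\Delta_\eta\cap E_i}\chi_V\wedge\pi_1^*\theta^m\to 0$ as $\eta\to 0$ since the integrand is a fixed smooth form; the residue $\cR$ has vanishing Lelong numbers, so its mass on $\Delta_\eta$ also tends to $0$; and the tail of the sum is controlled by the total mass. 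This yields the contradiction with no fiberwise slicing at all.

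Your fiberwise route, by contrast, hinges on commuting the weak limit in $s$ with disintegration along $\pi_1$. You flag this as the ``principal technical work'' but the tools you invoke do not apply: the fiberwise potentials $\phi_{s,z}$ have no reason to be monotone in $s$, so Bedford--Taylor monotone continuity is unavailable, and a measurable selection of fiberwise subsequences does not by itself justify exchanging $\lim_s$ with $\int_{\hat Z}\theta^m$. Without the global identity above, your $\mu_{s}$ is built from the non-closed piece $(\omega_V)^{m-1}$, so even its global weak limit carries no Siu-type structure you can exploit directly. The paper's identity is precisely the missing idea that converts the problem into one where standard current theory on $\cZ$ finishes the job in a few lines.
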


\begin{proof} We prove by contradiction. Suppose there exist $\varepsilon'>0$, $t\in (0,1)$, $\epsilon\in (0, \epsilon_Z)$ and  sequences $\eta_j \rightarrow 0$, $s_j \rightarrow 0$ such that for each $j$ and $\omega_V=\omega_V(t, \epsilon, s_j)$, we have
 $$\int_{ \Delta_{\eta_j}}  (\omega_V)^{m-1} \wedge \chi_V \wedge \pi_1^*(\theta^m) \geq 
 \varepsilon'.$$
We note that  
$$ \int_{ \Delta_\eta}  (\omega_V)^{m-1} \wedge \chi_V \wedge \pi_1^*(\theta^m) =  \int_{ \Delta_\eta}  (\omega_{\cZ, s})^{m-1} \wedge \chi_V \wedge \pi_1^*(\theta^m).
$$
Suppose  $(\omega_{\cZ, s_j})^{m-1}$ converges to a  closed positive  $(m-1, m-1)$-current $\cT$ on $\cZ$ as $s_j\rightarrow 0$, after passing to a sequence. By Siu's decomposition theorem, we have
$$\cT = \sum_{i=1}^\infty a_i [E_i] + \cR $$
where $a_i>0$, $E_j$ is an analytic subvariety of $\cZ$ with 
$$\dim E_i \leq m+1$$
and $\cR$ is the residue current with vanishing Lelong number everywhere. After taking a subsequence, for any fixed $\eta>0$
\begin{eqnarray*}
&&\lim_{s_j\rightarrow 0} \int_{\Delta_\eta} (\omega_V)^{m-1}\wedge\chi_V\wedge \pi_1^*(\theta^m)\\
&=& \sum_{i=1, \dim E_i = m+1}^\infty  a_i \int_{\Delta_\eta\cap E_i} \chi_V\wedge \pi_1^*(\theta^m) + \int_{\Delta_\eta} \cR\wedge \chi_V\wedge \pi_1^*(\theta^m).
\end{eqnarray*}

Since $\int_\cZ (\omega_V)^{m-1} \wedge \chi_V \wedge \theta^m$ is uniformly bounded above, for any $\epsilon>0$, there exists $J>0$ such that 
$$\sum_{j>J} a_j \int_{E_j} \chi_V \wedge \pi_1^* (\theta^m) < \varepsilon. $$
Since $\cR$ has vanishing Lelong number, by definition and by choosing $\eta>0$ sufficiently small, 
$$\int_{\Delta_\eta} \cR \wedge \chi_V\wedge \pi_1^*(\theta^m) < \varepsilon. $$
Since $\chi_V\wedge \pi_1^*(\theta^m) $ is a fixed smooth form on $\cZ$, by choosing $\eta$ sufficiently small, we have
$$a_i \int_{\Delta_\eta\cap E_=i }\chi_V\wedge \pi_1^*(\theta^m) < \varepsilon $$
for all $i$ with $\dim E_i = m+1$.

Combining the above estimates, we have
$$\lim_{s_j\rightarrow 0} \int_{\Delta_\eta} (\omega_V)^{m-1}\wedge\chi_V\wedge \theta^m= \sum_i a_i \int_{\Delta_\eta\cap E_i}  \chi_V\wedge \theta^m + \int_{\Delta_\eta} \cR\wedge \chi_V \wedge \theta^m< 3\varepsilon. $$
Then for sufficiently small $s_j$, we have
$$ \int_{\Delta_\eta} (\omega_V)^{m-1}\wedge\chi_V\wedge \theta^m < 4\varepsilon. $$
This leads to contradiction by choosing $4\varepsilon < \varepsilon'$ and the lemma is proved.
\end{proof}

\begin{lemma}  There exists $r_0>0$ such that for any $\varepsilon>0$, $t\in (0,1)$ and $\epsilon\in (0, \epsilon_Z)$,  there exist sufficiently small $\eta=\eta(r_0, \varepsilon, t, \epsilon), s_0= s_0(r_0, \varepsilon, t, \epsilon, \eta)>0$ such that for any $s\in (0, s_0)$, $r\in (0, r_0)$, $j\in \cJ$, we have in $B_j$
\begin{equation}
\cP_{\hat\chi_j} \left( \Omega_{s, \eta}^{(r)} \right) \leq n + \varepsilon,
\end{equation}
where $\hat\chi_j$ is defined in (\ref{s5epchoi}) and $\Omega_{s, \eta}^{(r)}$ is the regularization in $2B_j$. 

\end{lemma}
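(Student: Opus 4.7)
The plan is to prove $\cP_{\hat\chi_j}(\Omega_{s,\eta}^{(r)})\leq n+\varepsilon$ on each $B_j$ by combining four ingredients: the pointwise defect bound of Lemma~\ref{s5l56}, the $L^1$-smallness of Lemma~\ref{s5l57}, the convexity of $\cP_B(A)$ in $A$ (Lemma~\ref{s3con1}), and the comparison $\hat\chi_j\leq\chi\leq\hat\chi_j+\hat\epsilon g_j$ on $2B_j$ from (\ref{s5epchoi}).

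First, fix $r_0>0$ so that for every $x\in B_j$ and $r\leq r_0$ the Euclidean ball $B_r(x)$ lies in $2B_j$. Since $\hat\chi_j$ is constant on $2B_j$, convexity in the first slot applied pointwise to the convolution $\Omega_{s,\eta}^{(r)}(x)=\int\rho_r(y)\,\Omega_{s,\eta}(x-y)\,dy$ yields
\begin{equation*}
\cP_{\hat\chi_j}\bigl(\Omega_{s,\eta}^{(r)}\bigr)(x)\leq\bigl(\cP_{\hat\chi_j}(\Omega_{s,\eta})\bigr)^{(r)}(x).
\end{equation*}
By monotonicity of $\cP_B(A)$ in $B$ and the inclusion $\hat\chi_j\leq\chi\leq\hat\chi$ on $2B_j$, the integrand is in turn bounded by $\cP_{\hat\chi}(\Omega_{s,\eta})$, which Lemma~\ref{s5l56} bounds by $n+E(z)$ with
\begin{equation*}
E(z)=m\cV^{-1}\int_{\pi_1^{-1}(z)\cap\Delta_\eta}tr_{\omega_V}(\chi_V)(\omega_V)^m=m^2\cV^{-1}\int_{\pi_1^{-1}(z)\cap\Delta_\eta}\chi_V\wedge(\omega_V)^{m-1}.
\end{equation*}
The problem thus reduces to showing $E^{(r)}(x)<\varepsilon$ on $B_j$, uniformly in $r\in(0,r_0)$.

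For the latter, Fubini together with $\|\rho_r\|_\infty\leq Cr^{-2m}$ and the equivalence of $\theta^m$ and $d\textnormal{Vol}_{g_j}$ from (\ref{thetacho}) gives
\begin{equation*}
E^{(r)}(x)\leq Cr^{-2m}\int_{\pi_1^{-1}(B_r(x))\cap\Delta_\eta}\chi_V\wedge(\omega_V)^{m-1}\wedge\pi_1^*\theta^m,
\end{equation*}
and I would then run the Siu-decomposition argument of Lemma~\ref{s5l57} localized over the slab $\pi_1^{-1}(B_r(x))$. Writing $\lim_{s_j\to 0}(\omega_{\cZ,s_j})^{m-1}\wedge\chi_V=\sum_i a_i[E_i]+\cR$, each analytic component $E_i$ with $\dim_\C E_i=m+1$ has fibered $\pi_1$-volume of order $r^{2m}$ over $B_r(x)$, while intersecting with $\Delta_\eta$ supplies a further factor $\nu(\eta)\to 0$ from the thinning of the diagonal neighborhood; the residue current $\cR$, having vanishing Lelong number, contributes a term tending to zero as $\eta\to 0$ and $s\to 0$. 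The base-volume factor $r^{2m}$ exactly cancels the $r^{-2m}$ from the regularization kernel, yielding $E^{(r)}(x)\leq C(\nu(\eta)+o_s(1))$, and choosing $\eta$ then $s_0$ sufficiently small makes this $<\varepsilon$ uniformly in $r\in(0,r_0)$, $x\in B_j$, and $j\in\cJ$.

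The main obstacle is extracting the $r^{2m}$ factor in the localized integral so that the $r^{-2m}$ from the kernel is absorbed: a naive use of Lemma~\ref{s5l57} only gives an $L^1(\hat Z)$-bound on $E$, which is useless as $r\to 0$. The gain comes from the analytic-geometric structure of the Siu decomposition --- each $E_i$ has $\dim_\C E_i\leq m+1$, so $\pi_1(E_i)$ has complex dimension $\leq m$ and the $\pi_1$-fibers inside $E_i$ are generically curves whose $\Delta_\eta$-intersections shrink with $\eta$. Carrying out this localization uniformly in $r\in(0,r_0)$ and $x\in B_j$ is the technical heart of the proof and is where the hypothesis that $\Phi$ is a successive blow-up along smooth centers, making $\cS_{\hat Z}$ locally well-structured, is crucially used.
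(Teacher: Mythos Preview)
Your opening moves---pulling $\cP_{\hat\chi_j}$ inside the convolution by convexity (since $\hat\chi_j$ has constant coefficients), then comparing $\hat\chi_j\leq\hat\chi$ and invoking Lemma~\ref{s5l56} to bound the integrand by $n+E(z+y)$---are exactly what the paper does. The divergence is in how to control $E^{(r)}$.

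The paper's route is far simpler than yours: it applies Fubini to rewrite the convolution of $E$ as an integral over $\Delta_\eta$, bounds the kernel $r^{-2m}\rho(r^{-1}|y|)\,d\mathrm{Vol}_{g_j}$ crudely by a constant times $\pi_1^*\theta^m$, and then invokes the global smallness from Lemma~\ref{s5l57}. No localized Siu decomposition, no special structure of $\Phi$. You dismiss this as ``a naive use of Lemma~\ref{s5l57}\ldots useless as $r\to 0$,'' and you are right that the crude bound carries an implicit $r^{-2m}$ factor that the paper suppresses. But this only means that $\eta$ and $s_0$ should really be chosen depending on $r$ rather than on $r_0$ alone---and that weaker version is all the downstream argument needs. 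In Proposition~\ref{s5prop51} and the proof of Theorem~\ref{masscon} one fixes $r$, sends $s\to 0$ along a sequence, and only afterwards varies $r\in(0,r_0)$ for the limiting current $\tilde\omega$; since $\tilde\omega$ itself does not depend on $r$, no uniformity in $r$ is required at the level of $\omega_s$.

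Your attempted fix is therefore unnecessary, and as written it has its own gaps. The Siu decomposition concerns the limit as $s\to 0$, not finite $s$, so you would still need a contradiction argument as in Lemma~\ref{s5l57}; the claimed $r^{2m}$-cancellation for each component $E_i$ requires a case analysis on the generic rank of $\pi_1|_{E_i}$ that you do not carry out; and the assertion that the successive-blow-up structure of $\Phi$ is ``crucially used'' here is not correct---that hypothesis enters elsewhere (through $\phi$ and the Lelong numbers along $\cS_{\hat Z}$), not in this estimate.
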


\begin{proof} 

Since $\hat\chi_j$ has constant coefficients, for any $z\in B_j$, by convexity of $\cP_{\hat\chi_j}(\cdot)$ we have 
\begin{eqnarray*}
&& \cP_{\hat\chi_j} \left( \Omega_{s, \eta}^{(r)} \right) (z) \\
&=& \cP_{\hat\chi_j} \left( \int_{y\in B_r(0)} r^{-2m} \rho(r^{-1}|y|) \Omega_{s, \eta}(z+y) d\textnormal{Vol}_{\mathbb{C}^m} (y) \right)\\
&\leq& \int_{B_r(0)}   r^{-2m} \rho(r^{-1}|y|) \cP_{\hat\chi_j} \left(\Omega_{s, \eta}(z+y) \right) d\textnormal{Vol}_{\mathbb{C}^m} (y)  \\
&\leq& \int_{B_r(0)}   r^{-2m} \rho(r^{-1}|y|)\left( n + m \cV ^{-1}  \int_{\pi_1^{-1}(z+y)\cap \Delta_\eta} tr_{\omega_V}(\chi_V) (\omega_V)^m\right)d\textnormal{Vol}_{\mathbb{C}^m} (y)  \\
&\leq&n + m2^m \cV^{-1}  \int_{  \Delta_\eta} (\omega_V)^{m-1} \wedge \chi_V\wedge \theta^m\\
&\leq& n + \varepsilon
\end{eqnarray*}
by choosing sufficiently small $\eta, s>0$.
Here the second inequality follows from Lemma \ref{s5l56} and the last inequality follows from Lemma \ref{s5l57}. 
\end{proof}

\begin{lemma} \label{s5l59} There exists $r_0>0$ such that for any $t\in (0, 1)$, $\epsilon\in (0,\epsilon_Z)$ and $\eta\in (0, 1)$, there exists $s_0\in (0,1)$ so that for any $s\in (0, s_0)$ and $r\in (0, r_0)$, we have
\begin{equation}
\left( \omega_{s, \eta}+ 100\delta_0 \ddbar \phi\right) ^{(r)} > 10 \delta_0  \theta, 
\end{equation}
where $\phi$ is defined in (\ref{s5phi}).

\end{lemma}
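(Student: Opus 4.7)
The plan is to take the weak limit $s\to 0$ in $\omega_{s,\eta}$, apply the mass concentration from Lemma \ref{s5mass}, and then upgrade the resulting distributional inequality to a pointwise one on the regularizations, using that local regularization by convolution against a nonnegative kernel preserves the ordering of currents.

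After passing to a subsequence, let $\omega_\eta := \lim_{s\to 0}\omega_{s,\eta}$ as $(1,1)$-currents on $\hat{Z}$. Lemma \ref{s5mass} gives $(\omega_{\cZ,s})^m \to \Theta\geq 100\,\cV\delta_0[\Delta]$ on $\cZ$. Since $\pi_1$ restricts to a biholomorphism $\Delta\to\hat{Z}$ and $\chi_V|_\Delta=\hat\chi$ under this identification, the pushforward satisfies $(\pi_1)_*([\Delta]\wedge\chi_V)=\hat\chi$; and as $\Delta\subset\Delta_\eta$, the full diagonal contribution enters the definition of $\omega_{s,\eta}$, yielding $\omega_\eta\geq 100\delta_0\hat\chi\geq 100\delta_0\chi$ as currents on $\hat{Z}$. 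Next, the choice of $\phi$ in (\ref{s5phi}) gives $\chi+\ddbar\phi=\theta$ on $\hat{Z}\setminus\cS_{\hat{Z}}$, and the positive Lelong number of $\phi$ along $\cS_{\hat{Z}}$ makes $\chi+\ddbar\phi\geq\theta$ globally on $\hat{Z}$ as currents. Combining,
\[
\omega_\eta + 100\delta_0\ddbar\phi \;\geq\; 100\delta_0\hat\chi + 100\delta_0\ddbar\phi \;\geq\; 100\delta_0(\chi+\ddbar\phi) \;\geq\; 100\delta_0\theta
\]
as $(1,1)$-currents on $\hat{Z}$.

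Because $\theta$ is smooth and comparable to each Euclidean metric $g_j$ by (\ref{thetacho}), one can fix a universal $r_0>0$ (depending only on $\theta$ and the partition $\{B_j\}$) so that $\theta^{(r)}\geq\theta/2$ in each $B_j$ for every $r\in(0,r_0)$. Applying the regularization to the distributional inequality above and using positivity preservation gives
\[
(\omega_\eta)^{(r)} + 100\delta_0(\ddbar\phi)^{(r)} \;\geq\; 100\delta_0\theta^{(r)} \;\geq\; 50\delta_0\theta \qquad \text{in } B_j,\; r\in(0,r_0).
\]
To transfer the bound back from $\omega_\eta$ to $\omega_{s,\eta}$, I use that the currents $\omega_{s,\eta}$ have uniformly bounded mass (their class is fixed to be $\hat\alpha$), so for each fixed $r>0$, $(\omega_{s,\eta})^{(r)}\to(\omega_\eta)^{(r)}$ locally uniformly on $B_j$ as $s\to 0$. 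A compactness-and-contradiction argument (extracting subsequences if the lemma ever fails) then gives, for a suitable $s_0=s_0(r_0,t,\epsilon,\eta)$, the claimed bound $(\omega_{s,\eta})^{(r)}+100\delta_0(\ddbar\phi)^{(r)}\geq 10\delta_0\theta$ for all $s\in(0,s_0)$ and $r\in(0,r_0)$, absorbing the approximation error into the gap between $50\delta_0$ and $10\delta_0$.

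The main obstacle is the uniformity of the convergence $(\omega_{s,\eta})^{(r)}\to(\omega_\eta)^{(r)}$ in the scale $r\in(0,r_0)$ as $s\to 0$. For $r$ bounded away from zero this is straightforward from equicontinuity of the kernel family $\{K_r\}$, but for small $r$ the convolution nearly recovers $\omega_{s,\eta}$ itself, which for finite $s$ need not satisfy any pointwise lower bound by $\hat\chi$. The way around is to exploit that $(\ddbar\phi)^{(r)}$ becomes very large near $\cS_{\hat{Z}}$ along the positive Lelong locus of $\phi$, providing generous slack there, while away from a fixed tubular neighborhood of $\cS_{\hat{Z}}$ the regularization estimates stabilize and uniformity in $r$ can be obtained by taking $s$ sufficiently small; simultaneously shrinking the tubular neighborhood and $s_0$ then closes the argument.
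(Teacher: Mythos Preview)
Your approach is essentially the same as the paper's. Both arguments use Lemma~\ref{s5mass} to extract the diagonal contribution $100\delta_0\hat\chi$ from the weak limit of $\omega_{s,\eta}$, add $100\delta_0\,\ddbar\phi$ to convert $\hat\chi$ into $\theta$, and then run a compactness--contradiction argument to pass from the limiting inequality back to finite $s$. The only cosmetic difference is that you first take the weak limit $\omega_\eta$ as a current and then regularize, whereas the paper writes out the convolution integral explicitly at a point $z$ and passes the limit inside; these are equivalent since convolution against a fixed smooth kernel is continuous under weak convergence of uniformly mass-bounded positive currents.

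Your final paragraph correctly flags the uniformity-in-$r$ issue, and this is a genuine soft spot---the paper's own contradiction argument also treats $r$ as fixed (it takes sequences $s_j\to 0$, $z_j\to z$, but no varying $r_j$). However, your proposed workaround via the Lelong locus of $\phi$ near $\cS_{\hat Z}$ does not actually resolve the difficulty, because the potential failure is not localized there. Away from $\cS_{\hat Z}$ the function $\phi$ is smooth with $\ddbar\phi=\theta-\chi$, so for small $r$ one has $(\omega_{s,\eta}+100\delta_0\,\ddbar\phi)^{(r)}\approx \omega_{s,\eta}+100\delta_0(\theta-\chi)$, and there is no a~priori reason the nonnegative form $\omega_{s,\eta}$ dominates $100\delta_0\chi-90\delta_0\theta$ pointwise when $s>0$ is fixed and $r$ is tiny. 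In the paper's downstream uses (Proposition~\ref{s5prop51} and the proof of Theorem~\ref{masscon}) the parameter $s$ is in fact chosen after $r$, so one only needs $s_0=s_0(r)$; under that reading of the quantifiers both your contradiction argument and the paper's go through cleanly for each fixed $r\in(0,r_0)$.
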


\begin{proof} For any $z\in \hat Z$, for any sequence $s_j \rightarrow 0$, after passing to a subsequence, we have $(\omega_{\cZ, s_j})^ m $ weakly converges to a closed positive current $\Theta$ and by Lemma \ref{s5mass}, 
\begin{eqnarray*}
&& \lim_{s_j\rightarrow 0}  \omega_{s_j, \eta} ^{(r)}  (z) \\
&=&    \lim_{s_j\rightarrow 0}\int_{y\in B_r(0)}   r^{-2m} \rho(r^{-1}|y|) \omega_{s_j, \eta} (z+y) d\textnormal{Vol}_{\mathbb{C}^m} (y)\\
&=&  \cV^{-1} \lim_{s_j\rightarrow 0}\int_{y\in B_r(0)}   r^{-2m} \rho(r^{-1}|y|)  \left( \int_{ w\in \pi_1^{-1}(z+y) \cap \Delta_\eta} (\omega_{\cZ, s_j})^ m(z+y, w)\wedge \chi_V(w) \right) d\textnormal{Vol}_{\mathbb{C}^m} (y)\\
&= &  \cV^{-1}\int_{(z', w)\in (B_r(z) \times \cZ )\cap \Delta_\eta}   \left( r^{-2m} \rho(r^{-1}|z'- z|)  \Theta(z', w) \wedge \chi_V(w)\right) d\textnormal{Vol}_{\mathbb{C}^m} (z')\\
&\geq & 100\delta_0    \int_{(z', w)\in (B_r(z) \times \cZ )\cap \Delta }  \left(r^{-2m} \rho(r^{-1}|z'- z|)   \chi_V(w)\right) d\textnormal{Vol}_{\mathbb{C}^m} (z'). \\
&=& 100\delta_0    \int_{ z'\in B_r(z)  }  r^{-2m} \rho(r^{-1}|z'- z|)  \hat \chi(z') d\textnormal{Vol}_{\mathbb{C}^m} (z')
\end{eqnarray*}
 because $ r^{-2m} \rho(r^{-1}|z'- z|)  d\textnormal{Vol}_{\mathbb{C}^m} (z')$ is independent of $s_j$.  By definition of $\phi$, we have 
\begin{eqnarray*}
&&  \lim_{s_j\rightarrow 0}  \left( \omega_{s_j, \eta} + 100\delta_0 \ddbar  \phi\right)^{(r)}  (z) \\
&\geq& 100 \int_{z'\in B_r(z)  }  \left( r^{-2m} \rho(r^{-1}|z'- z|)  ( \delta_0 \hat\chi+ \delta_0\ddbar \phi)(z') \right) d\textnormal{Vol}_{\mathbb{C}^m} (z')\\
&\geq& 99 \delta_0 ~\theta
\end{eqnarray*}
by choosing $r\in (0, r_0)$ for some uniform  small $r_0$.

We claim that for fixed $t$, $\epsilon$, any $r\in (0,r_0)$ for some $r_0>0$ independent of $t, \epsilon$ and for sufficiently small $s> 0$ (possibly dependent on $t, \epsilon$),  
\begin{equation}\label{s5molli}
\left( \omega_{s, \eta}+ 100\delta_0 \ddbar \phi \right) ^{(r)} > 10\delta_0 \theta.
\end{equation}
 Suppose (\ref{s5molli}) fails at $s_j$ and $z_j$ for a sequence $s_j\rightarrow 0$ and $z_j \rightarrow z$. 
Then by passing to a subsequence and by similar calculation for $\lim_{s_j\rightarrow 0} \omega_{s, \eta}^{(r)}$, we have
\begin{eqnarray*}
&& \lim_{j \rightarrow \infty}\left( \omega_{s_j, \eta} + 100\delta_0 \ddbar  \phi\right) ^{(r)} (z_j) \\
&=&  \cV^{-1} \lim_{j \rightarrow \infty}\int_{y\in B_r(0)}   r^{-2m} \rho\left(\frac{|y|}{r}\right) \left( \int_{ w\in \pi_1^{-1}(z_j+y) \cap \Delta_\eta} (\omega_{\cZ, s_j})^ m(z_j+y, w)\wedge \chi_V(w)\right) d\textnormal{Vol}_{\mathbb{C}^m} (y)\\
&&+ 20\delta_0 \lim_{j\rightarrow \infty} (\hat\chi + \ddbar \phi)^{(r)}(z_j) -20\delta_0 \lim_{j\rightarrow \infty} \hat\chi ^{(r)}(z_j)\\
&\geq & \cV^{-1} \lim_{j \rightarrow \infty}\int_{y\in B_{r-|z-z_j|}(0)}   r^{-2m} \rho\left(\frac{|y+z_j-z|}{r}\right)  \left(\int_{ w\in \pi_1^{-1}(z+y) \cap \Delta_\eta} (\omega_{\cZ, s_j})^ m(z+y, w)\wedge \chi_V(w)\right)d\textnormal{Vol}_{\mathbb{C}^m} (y)\\
&&+  20\delta_0 \theta^{(r)}(z) - 20\delta_0 (\hat\chi)^{(r)}(z) \\
&\geq&  \cV^{-1} \lim_{j\rightarrow\infty} \int_{y\in B_r(0)}   r^{-2m} \rho\left(\frac{|y|}{r}\right) \left( \int_{ w\in \pi_1^{-1}(z+y) \cap \Delta_\eta} (\omega_{\cZ, s_j})^ m(z+y, w)\wedge \chi_V(w)\right) d\textnormal{Vol}_{\mathbb{C}^m} (y) \\
&& - 2^m \cV^{-1} \left( \lim_{j \rightarrow \infty} \sup_{y\in B_r(0)}  r^{-2m} \left| \rho(r^{-1}|y|)- \rho(r^{-1}|y+z_j-z|) \right|  \right)  \left(\int_\cZ (\omega_{\cZ, s_j})^m \wedge \chi_V\wedge \theta^{m-1}\right) \theta(z) \\
&& +  20\delta_0 \theta^{(r)}(z) -20 \delta_0 (\hat\chi)^{(r)}(z)  \\
&\geq &  15 \delta_0 \theta (z) 
\end{eqnarray*}
for sufficiently large $j>0$  and $r\in(0, r_0)$. This leads to  contradiction and the lemma is proved. 
\end{proof}

\begin{lemma} \label{s5l510} There exists $r_0>0$ such that for any $\varepsilon>0$,  $t\in (0,1)$ and $\epsilon\in (0, \epsilon_Z)$, there exist $s_0>0$ and $\eta_0>0$  such that for any $s\in (0, s_0)$, $\eta\in (0, \eta_0)$ and $r\in (0, r_0)$, 
\begin{equation}
(\omega'_{s, \eta})^{(r)} < \varepsilon\theta. 
\end{equation}

\end{lemma}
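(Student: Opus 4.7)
The plan is to exploit the structural observation that $\chi_V\wedge\chi_V\wedge(\omega_V)^{m-1}$ has vertical bidegree $(m+1,m+1)$ and so vanishes on the $m$-dimensional fiber $\pi_1^{-1}(z)$. Splitting $\chi_\cZ=\pi_1^*\hat\chi+\pi_2^*\hat\chi$ and pushing forward by $\pi_1$ yields the factorization
$$\omega'_{s,\eta}(z)=F_\eta(z)\,\hat\chi(z),\qquad F_\eta(z):=\cV^{-1}\!\!\int_{\pi_1^{-1}(z)\cap\Delta_\eta}(\omega_V)^{m-1}\wedge\chi_V,$$
so that $\omega'_{s,\eta}$ is the smooth semipositive $(1,1)$-form $\hat\chi$ scaled by a nonnegative measurable function $F_\eta$ on $\hat Z$ whose integral against $\theta^m$ is precisely $\cV^{-1}\int_{\Delta_\eta}(\omega_V)^{m-1}\wedge\chi_V\wedge\pi_1^*\theta^m$, i.e.\ exactly the quantity controlled by Lemma \ref{s5l57}.

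Fix $\lambda_1>0$ so that $\hat\chi\le\lambda_1\theta$ on $\hat Z$ uniformly in $t\in(0,1)$ and $\epsilon\in(0,\epsilon_Z)$; such a $\lambda_1$ exists because $\chi$ is pulled back from a K\"ahler form on $X$, hence smooth and semipositive on $\hat Z$ and dominated by any K\"ahler metric, while the $\epsilon^2 t\min(1,t)\theta$ summand of $\hat\chi$ is trivially controlled. Smoothness of $\hat\chi$ and $\theta$ together with $r<r_0$ small then promote the pointwise inequality $\hat\chi\le\lambda_1\theta$ to the regularized inequality $(\omega'_{s,\eta})^{(r)}\le(\lambda_1+o_r(1))\theta\cdot F_\eta^{(r)}$ on each $B_j$, so that Lemma \ref{s5l510} reduces to the uniform scalar bound $F_\eta^{(r)}(z)<\varepsilon/\lambda_1$ for all $r\in(0,r_0)$, $z\in B_j$, $j\in\cJ$.

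The scalar bound is produced by running exactly the Fubini--swap from the proof of the preceding lemma (the $\cP_{\hat\chi_j}(\Omega_{s,\eta}^{(r)})\le n+\varepsilon$ estimate): interchanging the local convolution with the inner fibrewise integral and invoking the quasi-equivalence $2^{-1/100}g_j\le\theta\le 2^{1/100}g_j$ from \eqref{thetacho} to compare the Euclidean volume element with $\theta^m$ yields
$$F_\eta^{(r)}(z)\le C\cV^{-1}\int_{\Delta_\eta}(\omega_V)^{m-1}\wedge\chi_V\wedge\pi_1^*\theta^m,$$
where $C$ depends only on $\|\rho\|_\infty$ and on the partition $\{B_j\}$. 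Lemma \ref{s5l57} then makes this right-hand side arbitrarily small by first shrinking $\eta$ (via the Siu decomposition of $\lim_{s\to 0}(\omega_{\cZ,s})^{m-1}$) and then $s$, completing the argument. The main technical point is the uniformity in $r\in(0,r_0)$ of this Fubini estimate -- exactly the difficulty that the local regularization framework, with $\theta$ quasi-equivalent to the Euclidean metrics $g_j$ in \eqref{thetacho}, is set up to handle, and the argument transfers verbatim from the preceding lemma.
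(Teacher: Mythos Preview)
Your factorization $\omega'_{s,\eta}=F_\eta\,\hat\chi$ via the vanishing of $\chi_V\wedge\chi_V\wedge(\omega_V)^{m-1}$ on the vertical fibers is correct and clarifies the structure nicely. However, the paper points to the argument of Lemma~\ref{s5l59} (the weak-convergence/compactness contradiction), not to the Fubini swap of the unnumbered lemma preceding it, and this distinction matters for the step you flag as ``the main technical point''.

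The specific claim
\[
F_\eta^{(r)}(z)\;\le\;C\,\cV^{-1}\!\int_{\Delta_\eta}(\omega_V)^{m-1}\wedge\chi_V\wedge\pi_1^*\theta^m
\]
with $C$ depending only on $\|\rho\|_\infty$ and the partition is not justified. Writing out the convolution, the kernel $r^{-2m}\rho(r^{-1}|y|)$ has sup norm of order $r^{-2m}$, so replacing $d\mathrm{Vol}_{g_j}$ by $\theta^m$ via \eqref{thetacho} produces a bound $F_\eta^{(r)}(z)\le C\,r^{-2m}\int_{\hat Z}F_\eta\,\theta^m$, which blows up as $r\to 0$. Equivalently: Lemma~\ref{s5l57} gives $\|F_\eta\|_{L^1(\theta^m)}$ small, but the mollification $F_\eta^{(r)}$ is only controlled by $\sup F_\eta$ (since the kernel is a probability measure), and nothing you have written bounds $\sup F_\eta$. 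A bump of height $1$ and small $L^1$ norm survives mollification at small scales.

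The paper's indicated route through Lemma~\ref{s5l59} replaces this direct inequality by a contradiction argument: for fixed $r$ one passes to weak limits of $(\omega_{\cZ,s})^{m-1}$ as $s\to 0$ (the Siu decomposition from the proof of Lemma~\ref{s5l57}), then lets $\eta\to 0$, and handles varying base points by compactness of $\hat Z$ exactly as in the second half of the proof of Lemma~\ref{s5l59}. That mechanism is what is meant to absorb the $r$-uniformity. Your reduction to the scalar $F_\eta$ is compatible with this route and in fact streamlines it, but you should invoke Lemma~\ref{s5l59}'s compactness scheme rather than the Fubini estimate.
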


\begin{proof} The proof can be easily obtained by combining Lemma \ref{s5l57} and the argument in Lemma \ref{s5l59}. 
\end{proof}

For any small $\varepsilon>0$,  we will  choose  sufficiently small $t_0=t_0(\varepsilon) \in (0,1)$ and $\epsilon_0=\epsilon(\varepsilon) \in (0, 
\epsilon_Z)$, such that in each $2B_j$ from the covering, we have 
\begin{equation}\label{choepp}
\hat\chi_j \leq \hat \chi \leq \hat\chi_j + 2\hat\epsilon g_j,   ~~\hat\chi < 2\theta, ~~2^{-1}\theta\leq g_j \leq 2\theta
\end{equation}
for some fixed $\hat\epsilon>0$ satisfying
\begin{equation}\label{choep}
\hat\epsilon=\hat\epsilon(\varepsilon)< m^{-1} \delta_0\varepsilon, 
\end{equation}
where $\delta_0$ is given in Lemma \ref{s5mass}. We remark that $\theta_0$  does not depend on the choice of $\hat \epsilon \in (0, 1)$. From now on, we fix $\theta_0$ and for fixed $\varepsilon>0$ we will choose the covering $\{ B_j\}_{j\in \cJ}$ and $\hat\epsilon$ satisfying and (\ref{choepp}) and (\ref{choep}).  

\begin{proposition} \label{s5prop51} There exists $r_0>0$ such that for any $\varepsilon>0$, there exist $t_0=t_0(\varepsilon)>0$ and  $\epsilon_0=\epsilon_0(\varepsilon)>0$, such that for any $t\in (0, t_0)$ and $\epsilon \in (0,\epsilon_0)$, there exist $s_0>0$  so that for any $s\in (0, s_0)$, we have on $\hat Z$, 
\begin{equation}\label{est545}
\cP_{\hat\chi + \hat\epsilon \theta }\left( \left(\omega_{s}  - \lambda^{-1}\delta_0 \hat\omega + 100\delta_0 \ddbar \phi ) \right)^{(r)}\right)<   n+2 \varepsilon . 
\end{equation}

\end{proposition}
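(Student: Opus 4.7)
The plan is to combine the local estimates of this section with the subsolution property of $\hat\omega$ in a two-stage argument: first bounding $\cP_{\hat\chi_j}(T^{(r)})$ locally in each chart $B_j$, then passing to the target lower index $\hat\chi + \hat\epsilon\theta$.

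Starting from the splitting $\omega_s = \Omega_{s,\eta} + \omega_{s,\eta} - \omega'_{s,\eta}$ introduced before Lemma \ref{s5l56}, I rewrite
\[
T := \omega_s - \lambda^{-1}\delta_0 \hat\omega + 100\delta_0\ddbar\phi = \Omega_{s,\eta} + \bigl(\omega_{s,\eta} + 100\delta_0\ddbar\phi\bigr) - \omega'_{s,\eta} - \lambda^{-1}\delta_0 \hat\omega
\]
and apply the local regularization $(\cdot)^{(r)}$ in each chart $B_j$ of the partition satisfying (\ref{choepp}) and (\ref{choep}). Each of the four resulting pieces comes with a tailored estimate: the unnumbered lemma immediately preceding Lemma \ref{s5l59} supplies $\cP_{\hat\chi_j}(\Omega_{s,\eta}^{(r)}) \leq n + \varepsilon$; Lemma \ref{s5l59} supplies $(\omega_{s,\eta}+100\delta_0\ddbar\phi)^{(r)} > 10\delta_0\theta$; Lemma \ref{s5l510} supplies $(\omega'_{s,\eta})^{(r)} < \varepsilon\theta$; and $\hat\omega$ is a subsolution of (\ref{332}) by Theorem \ref{chg}, so $\cP_{\hat\chi}(\hat\omega) < n$, which combined with $\hat\chi_j\le\hat\chi$ and the convexity-based regularization argument behind Lemma \ref{s3l34} gives $\cP_{\hat\chi_j}(\hat\omega^{(r)}) < n$ in $B_j$.

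To bound $\cP_{\hat\chi_j}(T^{(r)})$, set $\mu := \lambda^{-1}\delta_0$ and $P^{(r)} := (\omega_{s,\eta}+100\delta_0\ddbar\phi)^{(r)} - (\omega'_{s,\eta})^{(r)} > 9\delta_0\theta$, so that $T^{(r)} = \Omega_{s,\eta}^{(r)} + P^{(r)} - \mu\hat\omega^{(r)}$. Writing the convex decomposition
\[
T^{(r)} = (1-\mu)\Omega_{s,\eta}^{(r)} + \mu\bigl(\Omega_{s,\eta}^{(r)} + \mu^{-1}P^{(r)} - \hat\omega^{(r)}\bigr)
\]
and applying convexity of $\cP$ in its argument (Lemma \ref{s3con1}) reduces the bound to controlling $\cP_{\hat\chi_j}$ of the second summand. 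There the subtracted $\hat\omega^{(r)}$ is absorbed against the positive contribution $\mu^{-1}P^{(r)}$ (itself coming from the mass concentration of Lemma \ref{s5mass}) using the $\cP$-bound $\cP_{\hat\chi_j}(\hat\omega^{(r)}) < n$ and the monotonicity of $\cP$ in its argument. Choosing $s, \eta$ and then $t, \epsilon$ successively small yields $\cP_{\hat\chi_j}(T^{(r)}) \leq n + \tfrac{3\varepsilon}{2}$. For the second stage, subadditivity of $\cP_C(A)$ in $C$ together with $\hat\chi + \hat\epsilon\theta \leq \hat\chi_j + 5\hat\epsilon\theta$ in $2B_j$ from (\ref{choepp}) gives
\[
\cP_{\hat\chi + \hat\epsilon\theta}(T^{(r)}) \leq \cP_{\hat\chi_j}(T^{(r)}) + 5\hat\epsilon\,\cP_\theta(T^{(r)}),
\]
where the pointwise lower bound $T^{(r)} > 5\delta_0\theta$ (extracted from the same four estimates together with the $\hat\omega^{(r)}$ absorption) gives $\cP_\theta(T^{(r)}) \leq (m-1)/(5\delta_0)$, and the choice $\hat\epsilon \leq m^{-1}\delta_0\varepsilon$ from (\ref{choep}) bounds the last term by $\varepsilon/2$. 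Summing the two stages gives (\ref{est545}).

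The main obstacle is the rigorous absorption of $-\mu\hat\omega^{(r)}$ in the degenerate regime. Unlike the non-degenerate setting of \cite{ChG}, where $\hat\omega$ is uniformly $L^\infty$-comparable to $\hat\chi$ as $t, \epsilon \to 0$, the twisted $J$-equation (\ref{332}) here becomes degenerate in the limit and no pointwise upper bound on $\hat\omega$ survives. The decisive replacement is the combination of the $\cP$-bound $\cP_{\hat\chi_j}(\hat\omega^{(r)}) < n$ coming from the subsolution property with the robust positive bulk $P^{(r)} > 9\delta_0\theta$ produced by the mass concentration of Lemma \ref{s5mass}, which together suffice for the convexity-based absorption even when $\hat\omega$ is not uniformly bounded.
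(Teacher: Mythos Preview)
Your absorption of the term $-\lambda^{-1}\delta_0\hat\omega^{(r)}$ is where the argument breaks. You invoke the bound $\cP_{\hat\chi_j}(\hat\omega^{(r)})<n$ together with ``monotonicity of $\cP$ in its argument'' to absorb the subtracted term, but this inequality points the wrong way: $\cP_{\hat\chi_j}(\hat\omega^{(r)})<n$ says that on every hyperplane $V$ one has $\tr_{\hat\omega^{(r)}|_V}(\hat\chi_j|_V)<n$, which is a \emph{lower} bound on the eigenvalues of $\hat\omega^{(r)}$ relative to $\hat\chi_j$, not an upper bound. It therefore gives no control on $-\hat\omega^{(r)}$. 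In particular, in your convex decomposition
\[
T^{(r)}=(1-\mu)\,\Omega_{s,\eta}^{(r)}+\mu\bigl(\Omega_{s,\eta}^{(r)}+\mu^{-1}P^{(r)}-\hat\omega^{(r)}\bigr),
\]
nothing you have written guarantees the second summand is positive (so Lemma~\ref{s3con1} does not apply), and even if it were, its $\cP_{\hat\chi_j}$ could be arbitrarily large. The same gap recurs when you claim $T^{(r)}>5\delta_0\theta$ for the second stage.

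The paper's proof is much more direct and avoids this difficulty entirely: it uses a crude pointwise \emph{upper} bound $\lambda^{-1}\delta_0\,\hat\omega\le 2\delta_0\theta$ (consistent with $\hat\omega$ being the reference form $\hat\omega_0=(1+2\lambda t)\omega_0+\epsilon t\theta$, which is uniformly bounded above by a fixed multiple of $\theta$ for $t,\epsilon$ small). With this, the subtraction is simply swallowed by the $10\delta_0\theta$ from Lemma~\ref{s5l59}, yielding
\[
T^{(r)}\ge \Omega_{s,\eta}^{(r)}+5\delta_0\theta,
\]
and then one applies monotonicity of $\cP$ in its argument and subadditivity in its index:
\[
\cP_{\hat\chi_j+\hat\epsilon\theta}\bigl(\Omega_{s,\eta}^{(r)}+5\delta_0\theta\bigr)\le \cP_{\hat\chi_j}\bigl(\Omega_{s,\eta}^{(r)}\bigr)+\cP_{\hat\epsilon\theta}(5\delta_0\theta)\le (n+\varepsilon)+\frac{m\hat\epsilon}{\delta_0}\le n+2\varepsilon,
\]
using the choice $\hat\epsilon<m^{-1}\delta_0\varepsilon$ from (\ref{choep}). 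Your elaborate two-stage machinery, and the entire discussion in your last paragraph about replacing uniform $L^\infty$ control by a $\cP$-bound, are unnecessary once the subtracted form is recognised as uniformly bounded above.
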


\begin{proof}  

By Lemma \ref{s5l59} and Lemma \ref{s5l510}, for any $r\in (0, r_0)$, $t\in (0, t_0)$ and $\epsilon\in (0, \epsilon_0)$, there exists sufficiently small $\eta>0$ such that for any sufficiently small $s>0$, we have  
\begin{eqnarray*}
&&\left( \omega_{s} - \lambda^{-1} \delta_0 \hat \omega + 100\delta_0 \ddbar  \phi\right)^{(r)} \\
&=& \left( \Omega_{s, \eta} + (\omega_{s,\eta} + 100 \delta_0 \ddbar \phi) -  \lambda^{-1}\delta_0 \hat \omega-  \omega'_{s, \eta}    \right)^{(r)} \\
&\geq &   \Omega_{s, \eta}^{(r)} +  10\delta_0 \theta - 2\delta_0 \theta  - \delta_0 \theta      \\
&>& 0.
\end{eqnarray*}
By the choice of $\hat\epsilon$ as in (\ref{choep}), we have 
\begin{eqnarray*}
&&\cP_{\hat\chi + \hat \epsilon g_j}\left( \left(\omega_{s}  - \lambda^{-1} \delta_0 \hat\omega + 100\delta_0 \ddbar  \phi ) \right)^{(r)}\right) \\
&\leq& \cP_{\hat\chi_j +  \hat \epsilon \theta} \left(  \Omega_{s, \eta}^{(r)}  +  5\delta_0 \theta \right)\\
&\leq&   \cP_{\hat\chi_j}\left(\Omega_{s, \eta}^{(r)} \right) +  \cP_{  \hat\epsilon\theta} (5\delta_0 \theta)  \\
&\leq&  n+\varepsilon+ \frac{m\hat\epsilon}{\delta_0}  \\
&\leq& n +2\varepsilon. 
\end{eqnarray*}
\end{proof}

Estimate (\ref{est545}) in Proposition \ref{s5prop51} immediately implies that 
\begin{equation}
\cP_{\chi }\left( \left(\omega_{s}  - 2K^{-1}\delta_0 \hat\omega + 100\delta_0 \ddbar \phi ) \right)^{(r)}\right)<   n+2\varepsilon . 
\end{equation}
 by the choice of $\chi_j$.  Now we can now complete the proof of Theorem \ref{masscon}.  

\begin{proof}[Proof of Theorem \ref{masscon}]   We fix a sufficiently small $\varepsilon > 0$, by Proposition \ref{s5prop51}, there exist $r_0>0$ such that for fixed $t\in (0, t_0(\varepsilon))$ and $\epsilon\in (0, \epsilon_0(\epsilon))$,  there exist a sequence $s_i \rightarrow 0$ such that for any $r\in (0, r_0)$, 
$$\cP_{\hat\chi+ \theta} \left( (\omega_{s_i} - \lambda^{-1}\delta_0 \hat\omega  + 100\delta_0 \ddbar\phi)^{(r)} \right) \leq n+ 2\varepsilon.$$
After possibly passing to a subsequence, we let
$$\tilde\omega=\tilde\omega(t, \epsilon) = \lim_{s_i \rightarrow 0} (\omega_{s_i} - \lambda^{-1}\delta_0 \hat\omega  + 100\delta_0 \ddbar\phi).$$
Obviously $\tilde\omega(t, \epsilon)\in (1- \lambda^{-1}\delta_0)\hat\alpha$ and by Lemma \ref{s5l59},
\begin{eqnarray*}
\tilde\omega(t, \epsilon) &\geq&  100\delta_0 \hat\chi - \lambda^{-1} \delta_0 \hat\omega+ 100 \delta_0 \ddbar\phi\\
&\geq& 10\delta_0 \theta - 4\delta_0\theta\\
&\geq & 5\delta_0 \theta.
\end{eqnarray*}
On each $B_j$, after passing to a subsequence, there exist  plurisubharmonic functions $\psi_{s_i}$ and $\psi$ such that %
$$\omega_{s_i} - \lambda^{-1}\delta_0 \hat\omega  + 100\delta_0 \ddbar  \phi= \ddbar \psi_{s_i},  ~\ddbar \tilde\psi = \tilde \omega$$
and
$$~\lim_{s_i\rightarrow 0} \psi_{s_i} = \tilde\psi$$
in $L^1$ locally. Then for any $r\in (0, r_0)$, $\psi_{s_i}^{(r)}$ converges to $\tilde\psi^{(r)}$ uniformly in any compact subset of $B_j$. Since in $B_j$, 
$$(n+2\varepsilon) \left(\ddbar\psi_{s_i}^{(r)}\right)^m \geq  m \left(\ddbar \psi_{s_i}^{(r)}\right)^{m-1}\wedge \hat\chi\geq 0, $$
by letting $i\rightarrow \infty$, we have for any $t\in (0, t_0(\varepsilon)$, $\epsilon\in (0, \epsilon_0(\varepsilon))$ and $r\in (0, r_0)$, 
$$(n +2\varepsilon) \left( \tilde\omega^{(r)} \right)^m = n\left(\ddbar\tilde\psi^{(r)}\right) \geq  m\left(\ddbar\tilde\psi^{(r)}\right)^{m-1}\wedge \hat\chi= m \left( \tilde\omega^{(r)}\right)^{m-1}\wedge \hat \chi.$$
Now by taking a sequence $t_k, \epsilon_k \rightarrow 0$, we can assume
$\tilde\omega(t_k, \epsilon_k)$ converges to a closed positive current $\tilde\omega_0 \in (1-\lambda^{-1} \delta_0)\alpha$. In particular, $\tilde\omega_0$ has strictly positive Lelong number along $\cS_{\hat Z}$ since $\phi$ has strictly  positive Lelong number along $\cS_{\hat Z}$.

By the same argument above, we have for any $r\in (0, r_0)$, 
$$(n+2\varepsilon) \left( \tilde\omega_0^{(r)} \right)^m \geq m \left( \tilde\omega_0^{(r)}\right)^{m-1}\wedge \chi.$$
We let $\Omega = (1+ \lambda^{-1}  \delta_0)\tilde\omega_0$. Then
$$\Omega\in (1-\lambda^{-2}\delta_0^2) \alpha, ~ \cP_{\chi}(\Omega) \leq \frac{n+2\varepsilon}{(1+\lambda^{-1}\delta_0)} . $$
This completes the proof of Theorem \ref{masscon} by choosing $\delta = \lambda^{-2}\delta_0^2$ and $\varepsilon<< \lambda^{-1} \delta_0$ sufficiently small.
\end{proof}

\bigskip


\section{Gluing argument} 

In this section, we will follow the idea of Chen \cite{ChG} for gluing the local potentials by induction assumption and the trick of Blocki-Kolodziej \cite{BK}. The main difference of our argument from \cite{ChG} is that we do not assume the pluriclosed set of the modified plurisubharmonic function is a smooth subvariety and we also have to extend such a function. 

We keep the same notations as in \S 3, \S 4 and \S 5. The following is the main result of this section.

\begin{theorem} \label{s6main} Under the same assumptions of Theorem \ref{s3main}, we let $Z$ be an $m$-dimensional analytic subvariety of $X$ and let $\cS_Z$ be the set of singular points  of $Z$. Then there exists   $\varphi_Z\in C^\infty(Z\setminus   \cS_Z)$ such that
\begin{enumerate}
\item $\omega_Z = \omega_0+\ddbar\varphi_Z$
is a K\"ahler form on $Z\setminus   \cS_Z$ satisfying
$$n(\omega_Z)^m- m (\omega_Z)^{m-1}\wedge \chi>0,$$
\item  $\varphi_Z$ tends to $-\infty$ uniformly at $\cS_Z$.

\end{enumerate}

\end{theorem}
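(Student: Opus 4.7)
The plan is to combine three ingredients: the mass-concentration result Theorem~\ref{masscon} on the resolution $\hat Z$, the induction hypothesis of the proof of Theorem~\ref{s3main} applied to the lower-dimensional singular locus $\cS_Z$, and a Blocki--Kolodziej regularized-maximum gluing argument followed by a small quasi-plurisubharmonic perturbation that forces the $-\infty$ behavior along $\cS_Z$.

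Following the set-up of \S4--\S5, pass to the resolution $\Phi\colon\cM'\to\cM$ and identify $Z\setminus\cS_Z$ with $\hat Z\setminus\cS_{\hat Z}$. Theorem~\ref{masscon} produces a K\"ahler current $\Omega\in(1-\delta)\alpha|_{\hat Z}$ with strictly positive Lelong number along $\cS_{\hat Z}$, whose local regularizations satisfy $\cP_\chi(\Omega^{(r)})<n-\varepsilon$ in each chart $B_j$ of a fixed finite Euclidean partition, for all small $r$. After a harmless class shift (adding a small positive multiple of $\omega_0$, which only decreases $\cP_\chi$) one obtains local smooth potentials $\psi_j^{(r)}$ with $\omega_0+\ddbar\psi_j^{(r)}\in\alpha$ representing the smoothed form in $B_j$, still satisfying $\cP_\chi(\omega_0+\ddbar\psi_j^{(r)})<n-\varepsilon$ and with $\psi_j^{(r)}\to-\infty$ on $\cS_{\hat Z}\cap B_j$ as $r\to 0$. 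Since $\dim\cS_Z\leq m-1$, the induction hypothesis yields a smooth K\"ahler form $\omega_{\cS_Z}=\omega_0+\ddbar\varphi_{\cS_Z}$ in a neighborhood $U\supset\cS_Z$ with $\cP_\chi(\omega_{\cS_Z})<n$, whose pullback $\tilde\varphi_{\cS_Z}=\Phi^*\varphi_{\cS_Z}$ gives a smooth subsolution potential in a neighborhood of $\cS_{\hat Z}$.

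The core step combines the families $\{\psi_j^{(r)}+c_j\}_j$ and $\tilde\varphi_{\cS_Z}$ via iterated Blocki--Kolodziej regularized maxima. Choose chart-dependent constants $c_j$ so that on a thin transition shell around $\cS_{\hat Z}$ the regularized max equals $\tilde\varphi_{\cS_Z}$ closer to $\cS_{\hat Z}$ (where the $\psi_j^{(r)}$ become very negative from the positive Lelong number of $\Omega$) and agrees with the glued $\psi_j^{(r)}+c_j$ on the complement. Lemma~\ref{s3l34} ensures that the strict bound $\cP_\chi<n$ survives the regularized max, producing a globally smooth function $\varphi_Z^{(0)}$ on $\hat Z$ with $\omega_Z^{(0)}=\omega_0+\ddbar\varphi_Z^{(0)}\in\alpha$ K\"ahler and $n(\omega_Z^{(0)})^m>m(\omega_Z^{(0)})^{m-1}\wedge\chi$ everywhere.

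To enforce the $-\infty$ behavior at $\cS_Z$, build a quasi-plurisubharmonic function $\rho=\log\bigl(\sum_j\chi_j\sum_i|f_{j,i}|^2\bigr)$ analogous to $\psi_s$ in~(\ref{permass}), where $\{f_{j,i}\}$ locally generate the ideal sheaf of $\Phi^{-1}(\cS_Z)$ in $\hat Z$ and $\{\chi_j\}$ is a partition of unity. Then $\rho\in C^\infty(\hat Z\setminus\cS_{\hat Z})$, $\rho\to-\infty$ uniformly on $\cS_{\hat Z}$, and $\ddbar\rho$ is bounded below by a fixed smooth $(1,1)$-form. For sufficiently small $\eta>0$, set $\varphi_Z=\varphi_Z^{(0)}+\eta\rho$; the perturbation $\eta\ddbar\rho$ preserves the strict inequality $n\omega_Z^m>m\omega_Z^{m-1}\wedge\chi$ on $\hat Z\setminus\cS_{\hat Z}$, and descent via $\Phi$ yields the required $\varphi_Z\in C^\infty(Z\setminus\cS_Z)$ tending uniformly to $-\infty$ on $\cS_Z$. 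The main obstacle is coordinating the Blocki--Kolodziej gluing: unlike in Chen's~\cite{ChG} induction on smooth subvarieties with smooth mass-concentration forms, here $\Omega$ is only a K\"ahler current with genuine $-\infty$ singularities on $\cS_{\hat Z}$, so the chart-dependent regularization scales, overlap constants, and transition geometry must be chosen uniformly to preserve a strict $\cP_\chi<n$-margin throughout.
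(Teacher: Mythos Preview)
Your overall road map matches the paper's, but there are two genuine gaps in the gluing step.

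First, and most seriously, you apply the induction hypothesis only to $\cS_Z$ (equivalently $\cS_{\hat Z}$). The K\"ahler current $\Omega$ produced by Theorem~\ref{masscon} may have positive Lelong number on an analytic set strictly larger than $\cS_{\hat Z}$; the paper works with the superlevel set $\cS_{\tilde\epsilon}=\{p:\nu_\varphi(p)\ge\tilde\epsilon\}\supset\cS_{\hat Z}$. The chart-to-chart comparison of the local regularizations (Lemma~\ref{s6l63}(3) in the paper) rests on the bound $\varphi_{i,r}-\varphi_{i,r/2}\le(\log 2)\,\nu_{i,\varphi_i}(p,r)$, which is only useful where the Lelong number is small. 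At points of $\cS_{\tilde\epsilon}\setminus\cS_{\hat Z}$ this estimate is useless, the local potentials cannot be glued among themselves, and one needs the inductive subsolution to dominate there (Lemma~\ref{s6l63}(2)). Hence the inductive K\"ahler form must be built on a neighborhood of $\Phi(\cS_{\tilde\epsilon})$, not merely of $\cS_Z$; your $\tilde\varphi_{\cS_Z}$ simply does not cover the region where the regularized max would otherwise fail.

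Second, gluing the $\psi_j^{(r)}$ across charts by adding constants $c_j$ cannot work: constants create no gradient toward the chart center, so nothing forces $\psi_j^{(r)}+c_j$ to be dominated near $\partial B_j$. The paper instead subtracts a small strictly plurisubharmonic term $\delta^3\phi_{i,\theta}\approx\delta^3|z-p_i|^2$ (together with the compensating $\delta^2\phi$) in each ball, making $\tilde\varphi_{i,r}$ largest near $p_i$ and strictly smaller on $B_{i,3R}\setminus B_{i,2R}$; this quadratic modification is exactly what drives the comparison in Lemma~\ref{s6l63}(3). Your final $\eta\rho$ perturbation is a legitimate alternative to the paper's $\epsilon'(\omega_0+\delta\ddbar\phi)$, but it only preserves $\cP_\chi<n$ because $\varphi_Z^{(0)}$ is smooth on the \emph{compact} $\hat Z$ and hence satisfies a uniform $\cP_\chi<n-\varepsilon_0$; that uniformity is precisely what is not established until the two issues above are repaired.
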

In fact, $\varphi_Z$ in Theorem \ref{s6main} has positive Lelong number along $\cS_Z$. We first recall the following well-known definition (c.f. \cite{BK}) for Lelong number.
\begin{definition}\label{lelo} Let $\varphi$ be a plurisubharmonic function on a domain $\mathcal{U} \in \mathbb{C}^m$. We define for any $p \in \mathcal{U}$,
\begin{equation}
\nu_{\varphi}(p, r) = \frac{ \varphi_R(p)- \varphi_{r}(p)}{\log R -\log r},  
\end{equation}
where $0<r< R$, $B_R(p)\subset\subset \mathcal{U}$ and $\varphi_r(p)$ is defined by
\begin{equation}
\varphi_r(p)= \max_{B_r(p)} \varphi.
\end{equation}

\end{definition}

As $r \rightarrow 0$, $\nu_{\varphi}(p, r)$ converges decreasingly to the Lelong number of $\varphi$ at $x$
$$\lim_{r\rightarrow 0 } \nu_{\varphi}(p, r) = \nu_{\varphi}(p).$$

We let $\hat Z$ be a fixed component of the strict transform of $Z$ by $\Phi$ as in \S 4 and \S5. We keep the same notations for $\chi, \omega_0$ and $\theta$ as before.  For any small $\delta>0$, we will fix a covering of $\hat Z$ by finitely many Euclidean balls $\{ B_{i,4R}=B_{4R}(p_i)\}_{i\in \cI, p_i \in \hat Z}$ such that in each $B_{i, 4R}$, we have 
\begin{equation}
\theta = \ddbar \phi_{i,\theta}, ~  | \phi_{i, \theta} - r^2| \leq \tau R^2, ~r=|z|^2, 
\end{equation}
\begin{equation}
\omega_0= \ddbar \phi_{i, \omega_0}, ~   |\nabla \phi_{i, \omega_0}| \leq   K R, \phi_{i, \omega_0}(p_i)=0, 
\end{equation}
\begin{equation}
\chi = \ddbar \phi_{i,\chi}, ~   |\nabla \phi_{i, \chi}| \leq   K R, \phi_{i, \chi}(p_i)=0
\end{equation}
for some fixed sufficiently large  $K>0$  and sufficiently small $\tau>0$ independent of $R$,  where $\{ p_i\}_{i\in \cI}$ is a set of finitely many points on $\hat Z$ and $z$ is the local holomorphic coordinates in $B_{i, 4R}$. Without loss of generality, we can always assume $\{ B_{i,4R}\}_i$ are  finer coverings of $\{B_j\}_{j\in\cJ}$ in \S 5. Furthermore, we can require $4R<r_0$, where $r_0>0$ is given in Theorem \ref{masscon}. 

 We  choose $\Omega$ from Theorem \ref{masscon} and we let 
\begin{equation}
\Omega = (1-\delta) \omega_0 + \ddbar \varphi,
\end{equation}
for some $\delta>0$ and $\varphi\in \textnormal{PSH}(\hat Z, (1- \delta)\omega_0)$.  Then for some $\varepsilon>0$, 
$$\cP_{\chi}(\Omega^{(r)}) < n - \varepsilon$$
for all $0<r < R< r_0$.

We also define the following local plurisubharmonic functions at any $p\in B_{i, 3R}$ for any $r< R$,

\begin{equation}
\varphi_i (p)= \varphi (p)+ (1- \delta) \phi_{i,\omega_0}(p)
\end{equation}
and
\begin{equation}
  \varphi_{i, r} (p)= \sup_{B_{i,r}(p)} ( \varphi + (1-\delta) \phi_{i, \omega_0} ),
 \end{equation}
where $B_{i, r}(p)$ is the Euclidean ball in $B_{i, 4R}$ since $p\in B_{i, 3R}$ and $r<R$. 
Then by the choice of $\Omega$ above from Theorem \ref{masscon}, we  immediately have the following lemma.
\begin{lemma} \label{s6l61} There exists $\varepsilon>0$ such that in each $B_{i,3R}$, we have 
$$ \cP_{\chi } \left(  \ddbar \varphi_i^{(r)} \right) < n-\varepsilon$$
for any $0<r<R$, 
where $\varphi_i^{(r)}$ is the local regularization of $\varphi_i$ in $B_{i, 4R}$. 

\end{lemma}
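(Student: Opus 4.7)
The plan is to reduce Lemma \ref{s6l61} directly to Theorem \ref{masscon} by exploiting the identity $\ddbar \varphi_i = \Omega$ in $B_{i,4R}$. Since $\omega_0 = \ddbar \phi_{i,\omega_0}$ and $\Omega = (1-\delta)\omega_0 + \ddbar \varphi$, one has on $B_{i,4R}$ the pointwise equality
$$\ddbar \varphi_i = \ddbar\bigl( \varphi + (1-\delta)\phi_{i,\omega_0} \bigr) = (1-\delta)\omega_0 + \ddbar \varphi = \Omega.$$
Because the local mollification is a linear operation that commutes with $\ddbar$ in the sense of currents, this yields $\ddbar \varphi_i^{(r)} = \Omega^{(r)}$ on $B_{i,3R}$ for every $0 < r < R$, where the regularization is performed in the Euclidean coordinates of $B_{i,4R}$ (and the convolution is well-defined there since $B_r(p) \subset B_{i,4R}$ whenever $p \in B_{i,3R}$ and $r < R$).

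Next, I would invoke Theorem \ref{masscon} to bound $\cP_\chi(\Omega^{(r)})$. The one small point to verify is that the conclusion of Theorem \ref{masscon} can be read off in the covering $\{B_{i,4R}\}_{i\in\cI}$ rather than in the covering $\{B_j\}_{j\in\cJ}$ originally used to state it. This is precisely why the covering $\{B_{i,4R}\}_i$ was chosen to be a refinement of $\{B_j\}_{j\in\cJ}$ and $4R < r_0$. Inspecting the proof of Theorem \ref{masscon}, the bound on $\cP_\chi(\Omega^{(r)})$ is obtained from Proposition \ref{s5prop51}, whose only use of the partition is to guarantee that the constant-coefficient Kähler metrics $\hat\chi_j$ approximate $\chi$ to fineness $\hat\epsilon$ in the sense of (\ref{s5epchoi}) and that the reference metric $\theta$ is locally comparable to the flat Euclidean metric by (\ref{thetacho}). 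Both conditions persist under further refinement, so the same $\Omega$ and the same $\varepsilon$ work on $\{B_{i,4R}\}$.

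Combining the two steps gives $\cP_\chi(\ddbar \varphi_i^{(r)}) = \cP_\chi(\Omega^{(r)}) < n - \varepsilon$ in $B_{i,3R}$ for all $0 < r < R$, which is the claim. The only potential obstacle is the coordinate-dependence of the regularization: each $B_{i,4R}$ carries its own Euclidean chart, which differs from the chart on the surrounding $B_j \supset B_{i,4R}$, so $\Omega^{(r)}$ computed in the two charts is not literally the same object. However, by the convexity of $\cP$ (Lemma \ref{s3con1}) and the mollifier argument of Lemma \ref{s3l34}, any bound of the form $\cP_\chi(\eta) \leq c$ for the bounded hermitian current $\eta$ underlying the construction in Theorem \ref{masscon} passes to the mollification in any Euclidean chart of a sufficiently fine refinement, with a negligible degradation absorbed into the slack between $n-\varepsilon$ and $n$.
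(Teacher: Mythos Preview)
Your argument is correct and matches the paper's approach: the paper simply remarks that the lemma follows ``immediately'' from the choice of $\Omega$ in Theorem \ref{masscon}, and your computation $\ddbar\varphi_i = (1-\delta)\omega_0 + \ddbar\varphi = \Omega$ together with the commutation of mollification and $\ddbar$ is precisely the content of that remark. Your additional discussion of the coordinate-dependence of the regularization under passage to the finer covering $\{B_{i,4R}\}$ is a legitimate point the paper leaves implicit (it only notes that $\{B_{i,4R}\}$ refines $\{B_j\}$ and $4R<r_0$), and your resolution via the persistence of the approximation conditions (\ref{s5epchoi}), (\ref{thetacho}) under refinement is the right way to close that gap.
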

We note that $\nu_{\varphi}(p) = \nu_{\varphi_i}(p)$ for any $p\in B_{i, 3R}$ since $\phi_{i, \omega_0}$ is smooth.    The following lemma from \cite{ChG} (Lemma 4.2) gives some basic properties of $\nu_{i, \varphi_i}(p, r)$ in each $B_{i, 3R}$.  It can be proved using log convexity and Poisson kernel as in \cite{BK}. 
 
\begin{lemma} \label{s6l62}For $r < R$ and $p\in B_{i, 3R}$,  the following estimates hold. 
\medskip

\begin{enumerate}
\item 
$0 \leq  \varphi_{i, r} (p)-  \varphi_{i, \frac{r}{2}}(p) \leq (\log 2 )   \nu_{i, \varphi_i}(p, r). $

\medskip

\item   $ 0 \leq  \varphi_{i, r} (p)- \varphi_i^{(r)}(p) \leq \eta \nu_{i, \varphi_i} (p, r), $

\medskip
\end{enumerate}
where $\nu_{i, \varphi_i}(p, r)$ is defined for $\varphi$ in $B_{i, 3R}$ and $\eta>0$ is defined by
$$\eta =2^{2m} - \Vol(\partial B_1(0)) \int_0^1 t^{2m-1} (\log t) \rho(t) dt. $$ 

\end{lemma}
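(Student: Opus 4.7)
The overall strategy is to exploit the log-convexity of the function $s \mapsto \varphi_{i, e^s}(p)$, a classical Hartogs-type property of plurisubharmonic functions (the sup on balls is convex and non-decreasing in $\log r$), together with a polar-coordinate rewriting of the mollification $\varphi_i^{(r)}$ as a weighted integral of spherical means, in the spirit of the Poisson-Jensen arguments of Blocki-Kolodziej \cite{BK}.

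For (1), the plan is to apply slope-monotonicity of convex functions to the three ordered points $\log(r/2) < \log r < \log R$: since the chord slope over $[\log(r/2), \log r]$ is no larger than the chord slope over the adjacent interval $[\log r, \log R]$, log-convexity of $\varphi_{i,\cdot}(p)$ gives
\[
\frac{\varphi_{i,r}(p) - \varphi_{i, r/2}(p)}{\log 2} \leq \frac{\varphi_{i,R}(p) - \varphi_{i,r}(p)}{\log R - \log r} = \nu_{i,\varphi_i}(p,r),
\]
which is exactly (1). Non-negativity is immediate since $\varphi_{i,r}(p)$ is non-decreasing in $r$.

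For (2), I would pass to polar coordinates about $p$ to rewrite
\[
\varphi_i^{(r)}(p) = \Vol(\partial B_1)\int_0^r r^{-2m}\rho(t/r)\, t^{2m-1}\, \mu(t)\, dt,
\]
where $\mu(t)$ denotes the spherical mean of $\varphi_i$ on $\partial B_t(p)$. Using the normalization $\Vol(\partial B_1)\int_0^1 u^{2m-1}\rho(u)\,du = 1$, the difference $\varphi_{i,r}(p) - \varphi_i^{(r)}(p)$ becomes a single weighted integral of $\varphi_{i,r}(p) - \mu(t)$. I then split the integrand as
\[
\varphi_{i,r}(p) - \mu(t) = \bigl(\varphi_{i,r}(p) - \varphi_{i,t}(p)\bigr) + \bigl(\varphi_{i,t}(p) - \mu(t)\bigr).
\]
The first bracket is bounded by $\nu_{i,\varphi_i}(p,r)\log(r/t)$ via the same log-convexity argument as in (1). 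Substituting $u = t/r$, its contribution to the integral evaluates to exactly $-\Vol(\partial B_1)\, \nu_{i,\varphi_i}(p,r)\int_0^1 u^{2m-1}\log u \, \rho(u)\, du$, the second term of $\eta$. For the second bracket, comparing the sup on $\overline{B_t(p)}$ with the spherical mean via the sub-mean-value inequality on a ball of doubled radius produces the factor $2^{2m}$ (from the volume ratio $|B_{2t}|/|B_t| = 2^{2m}$), giving $\varphi_{i,t}(p) - \mu(t) \leq 2^{2m}\nu_{i,\varphi_i}(p,r)$; integrating this against the normalized weight contributes the $2^{2m}$ term of $\eta$.

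The main obstacle is precisely the second bracket: controlling $\varphi_{i,t}(p) - \mu(t)$ uniformly by a constant multiple of $\nu_{i,\varphi_i}(p,r)$ is not a direct log-convexity statement, since the sup on $\partial B_t(p)$ can exceed the mean, and the difference is not obviously dominated by the chord slope at scale $r$. This is where the Poisson-Jensen machinery of \cite{BK} is needed: writing $\varphi_i$ on $B_R(p)$ as the Poisson integral of its boundary values (harmonic, with matching sup and mean on concentric spheres) plus a Green's potential of the Riesz measure $\Delta\varphi_i$, whose mass near $p$ is governed by the Lelong-type quantity $\nu_{i,\varphi_i}(p,r)$, and carefully estimating the two pieces separately. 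Assembling both contributions reconstructs the explicit constant $\eta = 2^{2m} - \Vol(\partial B_1)\int_0^1 t^{2m-1}\log t\,\rho(t)\,dt$ and completes (2).
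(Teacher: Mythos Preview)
Your proposal is correct and follows exactly the route the paper indicates: the paper does not give a proof but cites \cite{ChG} (Lemma 4.2) and remarks that ``it can be proved using log convexity and Poisson kernel as in \cite{BK}'', which is precisely your outline --- log-convexity of $r\mapsto \varphi_{i,r}(p)$ for (1) and the first bracket in (2), and the Poisson--Jensen/Harnack argument from \cite{BK} for the sup-minus-spherical-mean term in (2). You have also correctly identified the only nontrivial step (bounding $\varphi_{i,t}(p)-\mu(t)$ by a multiple of $\nu_{i,\varphi_i}(p,r)$) and the tool needed for it.
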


We let $\cS_{\tilde\epsilon}$ be the analytic subvariety of $\hat Z$ defined by
\begin{equation}
\cS_{\tilde\epsilon} = \{ p \in \hat Z~|~ \nu_{\varphi}(p) \geq  \tilde\epsilon\} 
\end{equation}
for a fixed $\tilde\epsilon>0$ satisfying 
\begin{equation}\label{lelongcho}
\tilde \epsilon < \min\left(  \frac{\delta^3 R^2}{6+4\eta},~  \inf_{p\in \Phi^{-1}(\cS_{\hat Z})}\frac{\nu_\varphi(p)}{4}  \right)  .
\end{equation}
By Siu's decomposition theorem, $\cS_{\tilde{\epsilon}}$ is an analytic subvariety of $\hat Z$ containing $\cS_{\hat Z}$. 

By induction of dimensions, there exist an open neighborhood $U$ of $\Phi(\cS_{\tilde\epsilon})$ in $\cM$ and a smooth K\"ahler metric $\omega_U \in \alpha|_U$ such that
$$n(\omega_U)^{n-1} - (n-1)(\omega_U)^{n-2}\wedge\chi>0$$
in $U$.
We let $\hat U = \Phi^{-1} (U)$ and 
$$\omega_{\hat U}=\Phi^* \omega_{\hat U}= \omega_0 + \ddbar \varphi_{\hat U} $$
for some uniformly bounded and smooth $\varphi_{\hat U}$ (after possibly shrinking $U$).
Then immediately, we have 
$$n(\omega_{\hat U})^{n-1} - (n-1)(\omega_{\hat U})^{n-2}\wedge\chi \geq 0 $$
 in $\hat U$ and the strict inequality holds  in $\hat Z \setminus \cS_{\hat Z}$.

We define $\tilde\varphi_{i, r}$ on $B_{i, 3R}$ by 
\begin{equation}
\tilde \varphi_{i, r} (p) = \varphi_i^{(r)}(p)  - (1- \delta) \phi_{i, \omega_0}(p) -  \delta^3  \phi_{i, \theta}(p) +  \delta^2 \phi %
\end{equation}
Since we can choose a fixed sufficiently small $\delta$, we will assume that 
$$\delta^2 \theta <  \omega_0 +\delta \ddbar \phi.$$ 
Immediately we have 

\begin{equation}
\omega_0 +\ddbar  \tilde \varphi_{i, r} \geq \Omega^{(r)} -\delta^3 \theta + \delta(\omega_0 + \delta\ddbar \phi) > \Omega^{(r)}
\end{equation}
and
\begin{equation}
\cP_{\chi} ( \omega_0 + \ddbar \tilde \varphi_{i, r} ) < n - \varepsilon
\end{equation}
in each $B_{i, 4R}$ if we choose sufficiently small $\delta>0$.

Let $r_0$ be defined in Theorem \ref{masscon}. The following lemma corresponds to Proposition 4.1 in \cite{ChG} and provides the key estimates in this section. 

\begin{lemma}  \label{s6l63} There exist $0<r_1< \min(r_0, R/2)$  and an open neighborhood $\hat V \subset\subset \hat U$  of $\cS_{\tilde\epsilon}$ such that  the following estimates hold for any $r< r_1$.   

\begin{enumerate}

\item If $p\in \hat Z \setminus \hat V$, 
\begin{equation}\label{s634}
\max_{\{i\in\cI~|~ p  \in B_{i, 3R} \}}\nu_{i, \varphi_i} (p, r) \leq  2 \tilde \epsilon,
\end{equation}
and
\begin{equation}\label{est631} \max_{\{i\in\cI~|~ p  \in B_{i, 3R} \}}  \tilde\varphi_{i, r}(p)  > \sup_{\hat U}  \varphi_{\hat U} + 3 \tilde\epsilon \log r  +1  . 
\end{equation}

\item  If 
$$\max_{\{ i\in \cI \textnormal{ }|\textnormal{ } p \in B_{i, 3R} \}} \nu_{i, \varphi_i}(p, r) \geq 4 \tilde\epsilon, $$
then
\begin{equation}\label{est632}  \max_{\{i\in\cI~|~ p  \in B_{i, 3R} \}}  \tilde\varphi_{i, r} (p) \leq \inf_{\hat U}  \varphi_{\hat U} + 3 \tilde\epsilon \log r   - 1.
\end{equation}

\item If
$$\max_{\{ i\in \cI \textnormal{ }|\textnormal{ }x \in B_{i, 3R}\}} \nu_{i, \varphi_i}(p, r) \leq 4 \tilde\epsilon,$$
then
\begin{equation}\label{est633} 
\max_{\{j \in \cI~|~ p  \in B_{j, 3R}  \setminus B_{j, 2R} \}} \tilde\varphi_{j, r} (p)  <  \max_{\{i\in \cI~|~ x  \in B_{i, R}   \}}  \tilde\varphi_{i, r} (p)  - 2\tilde\epsilon.
\end{equation}

\end{enumerate}

\end{lemma}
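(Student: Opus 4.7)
The plan is to prove the three estimates in order, selecting the neighborhood $\hat V$ and the scale $r_1$ once and for all so that every case works. First I would pick $\hat V$ to be any relatively compact open neighborhood of $\cS_{\tilde\epsilon}$ in $\hat U$. Because $\cS_{\tilde\epsilon}$ is analytic by Siu's decomposition and $\nu_{\varphi}$ is upper semicontinuous, the complement $\hat Z \setminus \hat V$ is a compact set on which $\nu_{\varphi}(p) < \tilde\epsilon$ strictly. Using the monotonicity $\nu_{\varphi}(p,r)\searrow \nu_{\varphi}(p)$, upper semicontinuity of $\nu_{\varphi}(\cdot,r)$ in $p$, and a finite subcover argument, I obtain $r_1$ such that
$$\nu_{i,\varphi_i}(p,r) \leq 2\tilde\epsilon \qquad \text{for all } p \in \hat Z \setminus \hat V,\; 0 < r < r_1,\; p \in B_{i,3R}.$$
The correction between $\nu_{i,\varphi_i}$ and $\nu_{\varphi}$ coming from the smooth potential $\phi_{i,\omega_0}$ is of order $O(1/\log(R/r))$ and harmless. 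This proves the first half of (1).

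For (\ref{est631}), I combine Lemma \ref{s6l62}(2) with the definition of $\nu_{i,\varphi_i}(p,r)$ to get
$$\varphi_i^{(r)}(p) \;\geq\; \varphi_{i,R}(p) - \nu_{i,\varphi_i}(p,r)\log(R/r) - \eta\,\nu_{i,\varphi_i}(p,r) \;\geq\; C + 2\tilde\epsilon \log r$$
on $\hat Z \setminus \hat V$, where $C$ absorbs $\varphi_{i,R}(p)$, $-2\tilde\epsilon\log R$, and $-2\eta\tilde\epsilon$ (all uniformly bounded, since $\hat Z \setminus \hat V$ is compact, $\phi$ is bounded there, and $\varphi$ is bounded above). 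Because $\phi_{i,\omega_0}$, $\delta^3\phi_{i,\theta}$ are uniformly bounded on $B_{i,3R}$ and $\delta^2\phi$ is bounded on $\hat Z \setminus \hat V$ (it is disjoint from $\cS_{\hat Z}$), this yields $\tilde\varphi_{i,r}(p) \geq C' + 2\tilde\epsilon \log r$. Shrinking $r_1$ makes this exceed $\sup_{\hat U}\varphi_{\hat U} + 3\tilde\epsilon \log r + 1$, proving (\ref{est631}). Part (2) is symmetric: when $\nu_{i,\varphi_i}(p,r) \geq 4\tilde\epsilon$, the identity $\varphi_{i,r}(p) = \varphi_{i,R}(p) - \nu_{i,\varphi_i}(p,r)\log(R/r)$ combined with $\varphi_i^{(r)} \leq \varphi_{i,r}$ gives $\tilde\varphi_{i,r}(p) \leq \varphi_{i,R}(p) + 4\tilde\epsilon\log r + O(1)$, and $4\tilde\epsilon\log r$ eventually beats $3\tilde\epsilon\log r - 1$, giving (\ref{est632}).

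The crux of (3) is a direct cancellation exploiting the linearity of the local regularization. Writing $\varphi_i = \varphi + (1-\delta)\phi_{i,\omega_0}$ and $\varphi_i^{(r)} = \varphi^{(r)} + (1-\delta)\phi_{i,\omega_0}^{(r)}$, the pieces $\varphi^{(r)}$ and $\delta^2\phi$ that enter $\tilde\varphi_{i,r}$ are independent of $i$, so
$$\tilde\varphi_{j,r}(p) - \tilde\varphi_{i,r}(p) \;=\; (1-\delta)\bigl[(\phi_{j,\omega_0}^{(r)} - \phi_{j,\omega_0}) - (\phi_{i,\omega_0}^{(r)} - \phi_{i,\omega_0})\bigr](p) \;-\; \delta^3\bigl(\phi_{j,\theta}(p) - \phi_{i,\theta}(p)\bigr).$$
The first bracket is $O(r^2)$ because the $\phi_{i,\omega_0}$ are smooth. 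For the second, the normalization $|\phi_{i,\theta} - |z|^2|\leq \tau R^2$ implies $\phi_{j,\theta}(p) \geq (4-\tau)R^2$ when $p \in B_{j,3R}\setminus B_{j,2R}$ and $\phi_{i,\theta}(p) \leq (1+\tau)R^2$ when $p \in B_{i,R}$, so this contribution is at most $-(3-2\tau)\delta^3 R^2$. For $\tau, r$ small the total is strictly less than $-2\tilde\epsilon$, by the choice $\tilde\epsilon < \delta^3 R^2/(6+4\eta)$ in (\ref{lelongcho}); this is (\ref{est633}).

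\textbf{Main obstacle.} The delicate point is coordinating $\hat V$, $\tilde\epsilon$, and $r_1$. Each estimate individually is a short mollification or log-convexity computation, but the three must dovetail: $\hat V$ must sit compactly in $\hat U$ yet be large enough that $\sup_{\hat Z \setminus \hat V}\nu_{\varphi} < \tilde\epsilon$; and $r_1$ must simultaneously furnish the uniform Lelong bound, dominate the logarithmic constants in (\ref{est631}) and (\ref{est632}), and make the $O(r^2)$ error in (\ref{est633}) negligible against $\delta^3 R^2$. The quantitative threshold $\tilde\epsilon < \delta^3 R^2/(6+4\eta)$ is precisely what ties these inequalities together, after which the remainder is bookkeeping with Lemma \ref{s6l62} and the Euclidean normalizations of $\phi_{i,\omega_0}$ and $\phi_{i,\theta}$.
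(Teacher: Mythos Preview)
Your arguments for parts (1) and (2) are essentially the paper's: you use upper semicontinuity and monotone convergence of $\nu_{\varphi}(\cdot,r)$ to get the uniform Lelong bound on $\hat Z\setminus\hat V$, then combine Definition~\ref{lelo} with Lemma~\ref{s6l62}(2) and bound the auxiliary potentials $\phi_{i,\omega_0},\phi_{i,\theta},\phi$ uniformly to push the logarithmic gain $\pm\tilde\epsilon\log r$ past the constants. This is fine.

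The gap is in part (3). Your ``direct cancellation'' rests on the claim that the piece $\varphi^{(r)}$ in $\tilde\varphi_{i,r}$ is independent of $i$. It is not: the regularization $\varphi_i^{(r)}$ is the convolution of $\varphi_i$ in the local Euclidean coordinates of $B_{i,4R}$, and different balls carry different holomorphic charts (this is what makes the normalizations $|\phi_{i,\theta}-|z|^2|\leq\tau R^2$ possible at every center $p_i$). Hence, with the chart label made explicit,
\[
\tilde\varphi_{j,r}(p)-\tilde\varphi_{i,r}(p)
=\bigl[\varphi^{(r)}_{[j]}(p)-\varphi^{(r)}_{[i]}(p)\bigr]
+(1-\delta)\bigl[\cdots\bigr]
-\delta^3\bigl(\phi_{j,\theta}(p)-\phi_{i,\theta}(p)\bigr),
\]
and the first bracket does not vanish. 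For a merely plurisubharmonic $\varphi$ this term is \emph{not} $O(r^2)$; it is controlled only by the oscillation of $\varphi$ at scale $r$, i.e.\ by $\nu_{i,\varphi_i}(p,r)$. That is exactly why the hypothesis $\max\nu_{i,\varphi_i}(p,r)\leq 4\tilde\epsilon$ appears in (3), a hypothesis your argument never uses.

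The paper handles this by passing through the sup-regularization $\varphi_{i,r}(p)=\sup_{B_{i,r}(p)}\varphi_i$ rather than the convolution. One goes $\varphi_j^{(r)}\leq\varphi_{j,r}\leq\varphi_{j,r/2}+(\log 2)\,\nu_{j,\varphi_j}(p,r)$, then uses the chart-independent inequality $\sup_{B_{j,r/2}(p)}\varphi\leq\sup_{B_{i,r}(p)}\varphi$ (valid because the local Euclidean metrics are uniformly comparable, so $B_{j,r/2}(p)\subset B_{i,r}(p)$), and finally returns via $\varphi_{i,r}\leq\varphi_i^{(r)}+\eta\,\nu_{i,\varphi_i}(p,r)$. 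The Lelong hypothesis bounds both error terms by $(4+4\eta)\tilde\epsilon$, and the $|\nabla\phi_{i,\omega_0}|\leq KR$ normalization contributes $2KrR$; together these are beaten by $3\delta^3R^2$ thanks to the choice (\ref{lelongcho}) and $r_1<\delta^3R$. Your $\delta^3(\phi_{j,\theta}-\phi_{i,\theta})$ computation is correct and is the same mechanism, but you need the sup-regularization route (or an explicit estimate on $\varphi^{(r)}_{[j]}-\varphi^{(r)}_{[i]}$ in terms of $\nu$) to close the argument.
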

 
\begin{proof}   We follow the argument in \cite{ChG} and prove the estimates respectively. 

\medskip

\noindent (1).  First we fix any open $\hat V \subset\subset \hat U$. Then we can pick $r_1$ sufficiently small so that (\ref{s634}) holds since the Lelong number of $\varphi$ at any point in $\hat Z \setminus \hat V$ is less than $\tilde\epsilon$ and $\nu_{\varphi}(\cdot, r)$ is decreasing in $r>0$ and upper semi-continuous for any fixed $r>0$.

 By assumption, there exists $i\in \cI$ such that  $\nu_{i, \varphi_i}(p, r) \leq  2\tilde\epsilon$ for $p\in \hat Z\setminus \hat V\cap B_{i, 3R}$, and  by Definition \ref{lelo} there exists $C_1=C_1(R, \varphi)>0$ such that 
$$  \varphi_{i, r} (p) \geq  2\tilde\epsilon \log r - C_1$$
for $0<r<r_1.$ By Lemma \ref{s6l62} (2), 
$$ \varphi_i^{(r)}(p) \geq  2\tilde\epsilon \log r - \eta \nu_{i, \varphi_i} (p, r) - C_1. $$
Therefore  
\begin{eqnarray*}
&&\tilde \varphi_{i, r}(p)    - \sup_{\hat U} \varphi_{\hat U} - 3 \tilde\epsilon \log r-1\\
&=&\varphi_i^{(r)}(p) -   (1-\delta) \phi_{i, \omega_0} (p) - \delta^3 \phi_{i, \theta} + \delta^2 \phi(p)- \sup_{\hat U} \varphi_{\hat U} - 3\tilde\epsilon \log r  -1\\
 &\geq& -\tilde\epsilon \log r - \eta \nu_{i, \varphi_i}(p, r) -  \sup_{B_{i, 4R}}  \phi_{i, \omega_0} - \delta^3 \sup_{B_{i, 4R}} \phi_{i, \theta}+ \delta^2 \inf_{\hat Z\setminus \hat V}\phi - \sup_{\hat U} \varphi_{\hat U}   -1 - C_1\\
&\geq& -\tilde\epsilon \log r - C_2
\end{eqnarray*}
for some fixed $C_2>0$ only depending on $R$, $\{B_{i, 4R} \}_{i\in \cI}$, $\phi_{i, \omega_0}$,  $\phi_{i, \theta}$, $K$,  $\varphi_{\hat U} $ and $\phi|_{\hat Z\setminus \hat V}$. 
Then (\ref{est631}) follows by choosing  $0<r< r_1< e^{- C_2/\tilde\epsilon}$.

\bigskip

\noindent (2).   By (2) in Lemma \ref{s6l62}, there exists $C_3=C_3(R, \varphi)>0$ such that  
$$\varphi_i^{(r)}(p)\leq \varphi_{i, r}(p) \leq 4 \tilde\epsilon \log r + C_3$$
for some $i\in \cI$. Therefore there exists $C_4=C_4(\{B_{i, 4R}\}_{i\in \cI}, \phi_{i, \omega_0}, \phi_{i, \theta}, \phi, \phi_{\hat U} )>0$
\begin{eqnarray*}
&&\tilde \varphi_{i, r}(p)    - \inf_{\hat U} \varphi_{\hat U} - 3 \tilde\epsilon \log r +1\\
&=&\varphi_i^{(r)}(p) -     (1-\delta) \phi_{i, \omega_0}(p)   -  \delta^3 \phi_{i, \theta}(p) + \delta^2 \phi (p) -  \varphi_{\hat U} (p)- 3 \tilde\epsilon \log r  +1\\
&\leq&  \tilde\epsilon \log r -  (1-\delta)  \inf_{B_{i, 4R}} \phi_{i, \omega_0}   -  \delta^3 \inf_{B_{i, 4R}} \phi_{i, \theta} + \delta^2 \sup_{\hat Z }\phi - \inf_{\hat U} \varphi_{\hat U} +1 + C_3\\
&\leq&  \tilde\epsilon \log r + C_4.
\end{eqnarray*}
  Then (\ref{est632}) follows by choosing  $0<r<r_1 <e^{- C_4/\tilde\epsilon}$.

 \bigskip

\noindent (3). Suppose 
$$p\in \left( B_{j, 3R} \setminus B_{j, 2R}  \right) \cap B_{i, R} $$
and
$$\max_{\{i'~|~ p\in B_{i',  3 R} \}} \nu_{i', \varphi}(p, r) \leq 4\tilde\epsilon $$
for $r< \min (r_0, R/2).$
Then at $p$,  
\begin{eqnarray*}
&&\tilde \varphi_{j, r}(p)  \\
&=& \varphi_{j, r}(p) -  (1-\delta) \phi_{j, \omega_0}(p) -  \delta^3 \phi_{j, \theta}(p) + \delta^2 \phi(p)\\
&\leq & \varphi_{j, \frac{r}{2}} (p) + \nu_{j, \varphi_j}(p, r) \log 2-  (1-\delta) \phi_{j,\omega_0} (p)-  \delta^3 \phi_{j, \theta}(p) + \delta^2 \phi(p)\\
&\leq & \sup_{B_{j, \frac{r}{2}(p)}} \varphi  +  (1-\delta) \left( \sup_{B_{j, \frac{r}{2}}(p)} \phi_{j, \omega_0}  - \phi_{j, \omega_0}(p) \right)    + 4\tilde\epsilon - \delta^3 \phi_{i, \theta} (p) -  \delta^3 \left(\phi_{j, \theta} (p) - \phi_{i, \theta}(p) \right) + \delta^2 \phi(p)\\
&\leq&\sup_{B_{i, r}(p) \subset B_{i, 2R} } \varphi+ 4\tilde\epsilon +  (1-\delta) Kr R -  \delta^3 \phi_{i, \theta} (p) -  \delta^3 ( 4R^2 - R^2) + \delta^2 \phi(p)\\
&\leq & \varphi_{i, r}(p) -  (1-\delta)\phi_{i, \omega_0} (p)  - \delta^3 \phi_{i, \theta}(p)+ \delta^2 \phi(p)+   (1-\delta) \sup_{B_{i, r}(p)}\left(\phi_{i, \omega_0}(p) -\phi_{i, \omega_0}  \right) \\
&&+ (1-\delta) K r R + 4\tilde\epsilon -  3\delta^3 R^2  \\
&\leq & \tilde\varphi_{i, r}(p) + (4+4 \eta) \tilde\epsilon+ 2KrR -  3\delta^3 R^2\\
&\leq & \tilde\varphi_{i, r}(p) - 2 \tilde \epsilon
\end{eqnarray*}
by the choice $\tilde \epsilon$ in (\ref{lelongcho}) and choosing $0<r< r_1<  \delta^3 R$.

\end{proof}

We define 
\begin{eqnarray*}
&&\tilde \varphi_\epsilon \\
&=&\tilde\max_{i\in \cI} \{ \tilde\varphi_{i, r},  \varphi_{\hat U}+ 3\tilde\epsilon \log r  \}\\
&=&\int\int_{\mathbb{R}^{\cI+1}} (\epsilon)^{\cI+1}\left(  \prod_{i=0}^\cI\rho\left(\frac{s_j}{\epsilon}\right) \right)\max_{ i\in \cI } \left( \tilde\varphi_{i, r} +s_i,   \varphi_{\hat U}+ 3\tilde\epsilon \log r +s_0 \right) ds_0ds_1...ds_{\cI}
\end{eqnarray*}
as the  regularized maximum of $\tilde\varphi_{i, r}$ and $(1-\delta)\varphi_{\hat U}$ for sufficiently small $\epsilon>0$ by assuming $\tilde\varphi_{i, r}\equiv0$ outside $B_{i, 3R}$ and $\varphi_{\hat U}+3\tilde\epsilon\log r \equiv 0$ outside $\hat U$. By Lemma \ref{s6l63}, $\tilde\varphi_\epsilon \in C^\infty(\hat Z)\cap \PSH(\hat Z, \omega_0)$  is well-defined for sufficiently small $r$ and $\epsilon$. Furhtermore, $\tilde \varphi$ coincides with $\varphi_{\hat U}$ in an open neighborhood of $\cS_{\hat Z}$ by the choice $\tilde\epsilon$.

Let  $\Omega_1 \in \alpha$  be the  closed $(1,1)$-current defined  by
$$ \Omega_1 = \omega_0 + \ddbar \tilde \varphi_\epsilon$$
for fixed sufficiently small $\epsilon>0$.
Then by construction, $\Omega_1 \in \alpha $ is smooth on $\hat Z$ 
$$\cP_\chi(\Omega_1)< n. $$
We can assume that $\cP_\chi(\Omega_1)< (1 - \epsilon')n $ on $\hat Z$ for some $\epsilon'>0$. We then let
$$\Omega_2 = (1-\epsilon')\Omega_1 + \epsilon' ( \omega_0 + \delta \ddbar \phi).$$
Then immediately we have the following lemma.
\begin{lemma} \label{s6l64} The K\"ahler current $\Omega_2 \in \alpha$ is smooth on $\hat Z \setminus  \cS_{\hat Z} $ and  
$$\cP_\chi(\Omega_2)  < n $$
or equivalently,
$$n \Omega_2^m - m \Omega_2^{m-1}\wedge \chi>0$$
on $\hat Z \setminus  \cS_{\hat Z}$.
Furhtermore, $\tilde\Omega$ has  positive Lelong number along $\cS_{\hat Z}$.

\end{lemma}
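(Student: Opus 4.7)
The plan is to read off all three conclusions directly from the construction $\Omega_2 = (1-\epsilon')\Omega_1 + \epsilon'(\omega_0 + \delta\ddbar\phi)$ together with the properties of the two summands already established above.

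For smoothness on $\hat Z \setminus \cS_{\hat Z}$: the first piece $\Omega_1 = \omega_0 + \ddbar\tilde\varphi_\epsilon$ is smooth on all of $\hat Z$ because $\tilde\varphi_\epsilon$ is a regularized maximum of the $C^\infty$ functions $\tilde\varphi_{i,r}$ and of $\varphi_{\hat U} + 3\tilde\epsilon\log r$, and on the neighborhood of $\cS_{\hat Z}$ it in fact agrees with the smooth $\varphi_{\hat U}$. For the second piece, (\ref{s5phi}) gives $\ddbar\phi = \theta - \chi$ on $\hat Z \setminus \cS_{\hat Z}$, so $\omega_0 + \delta\ddbar\phi = \omega_0 + \delta(\theta - \chi)$ is a smooth $(1,1)$-form there. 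The convex combination is therefore smooth on $\hat Z \setminus \cS_{\hat Z}$.

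For the estimate $\cP_\chi(\Omega_2) < n$, I would invoke the convexity of the operator $\cP_\chi$ from Lemma \ref{s3con1}, which yields
$$\cP_\chi(\Omega_2) \;\leq\; (1-\epsilon')\,\cP_\chi(\Omega_1) \;+\; \epsilon'\,\cP_\chi\bigl(\omega_0 + \delta\ddbar\phi\bigr).$$
The first summand is already under control since $\cP_\chi(\Omega_1) < (1-\epsilon')n$ by hypothesis. For the second summand, the previously arranged inequality $\delta^2\theta < \omega_0 + \delta\ddbar\phi$, together with the normalization (\ref{kah}), guarantees that $\omega_0 + \delta\ddbar\phi$ is a smooth K\"ahler form on $\hat Z \setminus \cS_{\hat Z}$ whose $\cP_\chi$ is uniformly bounded by some constant $C$ depending only on $\omega_0$, $\theta$, $\chi$ and $\delta$. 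Shrinking $\epsilon'$ if necessary so that $(1-\epsilon')^2 n + \epsilon' C < n$ then produces the required pointwise bound $\cP_\chi(\Omega_2) < n$, which is equivalent to the $(m-1,m-1)$-positivity of $n\Omega_2^m - m\Omega_2^{m-1}\wedge\chi$ on $\hat Z \setminus \cS_{\hat Z}$.

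For the positive Lelong number along $\cS_{\hat Z}$: since $\Omega_1$ is globally smooth on $\hat Z$, it contributes nothing to Lelong numbers, whereas $\phi$ has strictly positive Lelong number along $\cS_{\hat Z}$ by construction (it was chosen as a quasi-psh potential whose pole set is exactly the exceptional locus of $\Phi$ over $\hat Z$). Therefore at any $p \in \cS_{\hat Z}$ one obtains $\nu_{\Omega_2}(p) = \epsilon'\delta\,\nu_\phi(p) > 0$.

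The only real subtlety here is the uniform pointwise bound on $\cP_\chi(\omega_0 + \delta\ddbar\phi)$ across $\hat Z \setminus \cS_{\hat Z}$, which is what forces the compatibility between $\delta$ and the background data $\omega_0$, $\theta$, $\chi$ set up earlier. Once this bound is in hand, convexity of $\cP_\chi$ finishes the argument in one line, which is presumably why the author labels the lemma as immediate.
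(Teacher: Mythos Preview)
Your treatment of smoothness and of the Lelong number is fine and matches what the paper intends.

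The step for $\cP_\chi(\Omega_2)<n$, however, has a real gap. You invoke convexity (Lemma~\ref{s3con1}) to get
\[
\cP_\chi(\Omega_2)\leq (1-\epsilon')\,\cP_\chi(\Omega_1)+\epsilon'\,\cP_\chi(\omega_0+\delta\ddbar\phi)
\leq (1-\epsilon')^2 n+\epsilon'\,C,
\]
and then propose to ``shrink $\epsilon'$ if necessary'' so that $(1-\epsilon')^2 n+\epsilon' C<n$. But expanding gives $(1-\epsilon')^2 n+\epsilon' C=n+\epsilon'(C-2n)+O((\epsilon')^2)$, so shrinking $\epsilon'$ only helps if $C<2n$. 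Here $C=\sup_{\hat Z\setminus\cS_{\hat Z}}\cP_\chi(\omega_0+\delta\ddbar\phi)$; from $\omega_0+\delta\ddbar\phi\geq(1-\delta)\omega_0\geq(1-\delta)\lambda^{-1}\chi$ one only gets $C\leq (m-1)\lambda/(1-\delta)$, and $\lambda$ in (\ref{kah}) is a fixed large constant not under your control. So the inequality $(1-\epsilon')^2 n+\epsilon' C<n$ cannot be arranged in general by adjusting $\epsilon'$ alone. (If you decoupled the $\epsilon'$ in the bound $\cP_\chi(\Omega_1)<(1-\epsilon'_0)n$ from the $\epsilon'$ in the definition of $\Omega_2$, the argument would go through; but as written, and as set up in the paper, they are the same parameter.)

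The argument the paper has in mind is monotonicity rather than convexity, and this is why the lemma is stated as immediate with no need to shrink anything. Since $\omega_0+\delta\ddbar\phi>\delta^2\theta>0$ on $\hat Z\setminus\cS_{\hat Z}$, one has $\Omega_2>(1-\epsilon')\Omega_1$ as Hermitian forms, and $\cP_\chi$ is monotone decreasing and $(-1)$-homogeneous in its argument:
\[
\cP_\chi(\Omega_2)\leq \cP_\chi\bigl((1-\epsilon')\Omega_1\bigr)=(1-\epsilon')^{-1}\cP_\chi(\Omega_1)<(1-\epsilon')^{-1}(1-\epsilon')n=n.
\]
This uses exactly the $\epsilon'$ already fixed in the construction and requires no bound on $\cP_\chi(\omega_0+\delta\ddbar\phi)$.
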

 
 \medskip

\begin{proof}[Proof of Theorem \ref{s6main}] If $Z$ is irreducible, Theorem \ref{s6main} follows immediately from Lemma \ref{s6l64} since $Z\setminus \cS_Z= \Phi\left( \hat Z \setminus \cS_{\hat Z}\right)$ and $\varphi_{\hat U}$ is the pullback of a $\omega_0$-PSH function on an open neighborhood of $U$. If $Z$ is not irreducible, we apply the construction of $\Phi: \cM' \rightarrow \cM$ by resolving singularities of $Z$ as in \S 4. Then the strict transform of $Z$ by $\Phi$ is a union of disjoint smooth $m$-dimensional submanifold of $\cX$. Then Theorem \ref{s6main} again follows by applying Lemma \ref{s6l64} to each component.

\end{proof}

\section{Proof of Theorem \ref{s3main}}

In this section, we will complete the proof of Theorem \ref{s3main} and its corollaries.

\begin{proof}[Proof of Theorem \ref{s3main}]

First we assume Lemma \ref{s4cont}. Let $Z$ be an $m$-dimensional analytic subvariety of $X$ and $\cS_Z$ be the singular set of $Z$ (including lower dimensional components of $Z$). By the induction assumption, there exists  an open neighborhood $U$ of $\cS_Z$ in $\cM$ such that there exists $\varphi_U\in C^\infty(U)\cap \PSH(U, \omega_0)$ satisfying
$$n(\omega_U)^{n-1} - (n-1)(\omega_U)^{n-2}\wedge \chi>0, ~\omega_U= \omega_0+\ddbar \varphi_U$$
 in $U$.

We choose $\varphi_Z$ as in Theorem \ref{s6main}. Then there exist $A>0$ and  open neighborhoods $U_0 \subset\subset U_1\subset\subset U_2 \subset \subset U$ of $\cS_Z $ in $\cM$ such that 

\begin{enumerate}

\item Both $ Z\setminus U_1$ and $ Z\setminus U_2$ are a union of finitely many $m$-dimensional smooth open analytic varieties of $Z$,

\medskip

\item $\varphi_Z <( \varphi_U-A)-2$ in $Z\cap U_1$, 

\medskip

\item $\varphi_Z > ( \varphi_U-A) +2$ in $Z \cap \left( U \setminus U_2\right)$, 

\end{enumerate}
since $\varphi_Z$ is smooth on $Z\setminus \cS_Z$ and tends to $-\infty$ uniformly along $\cS_Z$.

Let $\varphi_Z'$ be a smooth extension of $\varphi_Z$ in $Z\setminus U_1$ and let
$$\varphi''_Z  (p)= \varphi'_Z(p) + B d^2 (p)$$
for $B>1$, where $d$ is the distance function from $p$ to $Z$ with respect to any fixed K\"ahler metric on $\cM$. By choosing sufficiently large $B$, 
$\omega''_Z= \omega_0 + \ddbar \varphi''_Z$ is a K\"ahler form and 
$$n (\omega''_Z)^{n-1} - (n-1)(\omega''_Z)^{n-2} \wedge \chi >0$$
in an open neighborhood of $Z\setminus U_0$ in $X$. Since $\varphi''_Z$ is also a continuous extension of $\varphi'_Z$ and $\varphi_U$ is continuous, there exists a sufficiently small open neighborhood $U_3$ of $Z\setminus U_1$ in $\cM$ such that
\begin{enumerate}

\item $\varphi''_Z <( \varphi_U-A)-1$ in $U_3\cap U_1$ 

\item $\varphi''_Z > ( \varphi_U-A) +1$ in $U_3 \setminus U_2$ .

\end{enumerate}

We the define $\bar \varphi$ to be the regularized maximum of $\varphi''_Z$ and $\varphi_U$. Then
$$\omega= \omega_0+ \ddbar \bar \varphi$$
is a K\"ahler form in an open neighborhood of $X$ and it satisfies
$$n \omega^{n-1} - (n-1) \omega^{n-2}\wedge\chi >0$$
in an open neighborhood of $Z$. This proves Theorem \ref{s3main} by assuming Lemma \ref{s4cont}. 

To prove Lemma \ref{s4cont}, for any $\epsilon \in (0, \epsilon_Z)$, we let $$\cT_\epsilon=\{ t\in [0, T]~|~(\ref{332}) ~\textnormal{is solvable} \}, 
~\mathbf{t}_\epsilon= \inf \cT_\epsilon.$$ 
Obviously $\cT_\epsilon$ is open in $[0,T]$ and it suffices to show $\cT_\epsilon$ is closed or $\mathbf{t}_\epsilon=0$. Suppose $\mathbf{t}_\epsilon>0$, then we can apply the same argument in \S 4, \S 5 and \S 6 with suitable small changes and let $t\rightarrow \mathbf{t}_\epsilon$ decreasingly to obtain a K\"ahler form $\omega\in \hat\alpha= \hat\alpha(\mathbf{t}_\epsilon, \epsilon)$ satisfying 
$$n(\omega')^n - (n-1)(\omega')^{n-2}\wedge \chi>0$$ in an open neighborhood of any given $m$-dimensional subvariety $Z$ of $X$. But such a subsolution $\omega'$ implies that equation (\ref{332}) can be solved at $\mathbf{t}_\epsilon$ and this leads to contradiction.

\end{proof}

Theorem \ref{s3main} is a special case of Theorem \ref{main1} by assuming $X$ is irreducible. However,  Theorem \ref{main1} can be proved by the same argument for Theorem \ref{s3main}.
Corollary \ref{main2} is an immediate consequence of Theorem \ref{main1} and Corollary \ref{main3} follows by combining Corollary \ref{main2} and the result of \cite{SW1}.

\section{Applications}

In this section, we will prove Theorem \ref{maincsc} by combining the ideas from \cite{Ch1, W2, SW1, LSY, JSS, CC}.  

Let $X$ be a K\"ahler manifold of $\dim X=n$. Tian's $\alpha$-invariant  for a K\"ahler class $\gamma$ on $X$ is defined by 
\begin{equation}\label{alphainv}
\alpha(X, \gamma) = \sup\{ \kappa >0~|~\int_X e^{-\kappa(\varphi - \sup_X \varphi)} \omega^n \leq C_\kappa, ~\forall \varphi\in \PSH(X, \omega)\},
\end{equation}
for any given K\"ahler form $\omega\in \gamma$. We remark that the $\alpha$-invariant does not depend on the choice of $\omega \in \alpha$ and it is always positive. 

The following lemma is proved in \cite{JSS} (Lemma 2.1) after applying the important work of Chen-Cheng \cite{CC} relating properness of the Mabuchi $K$-energy and existence of constant scalar curvature K\"ahler metrics. 

\begin{lemma} \label{s8jss} Let $X$ be an $n$-dimensional compact K\"ahler manifold and let $\gamma$ be a K\"ahler class on $X$ satisfying
$$\gamma^n = \gamma^{n-1}\cdot K_X.$$ If there exist $\epsilon \in \left(0, \frac{n+1}{n} \alpha(X, \gamma)\right)$, a K\"ahler from $\omega' \in [\omega]$ and a closed $(1,1)$-form $\eta \in [K_X]$ such that 
$$\eta + \epsilon \omega'>0$$
and
$$\left( (n+\epsilon) \omega' - (n-1)\eta \right) \wedge (\omega')^{n-2} >0$$
everywhere on $X$, then the Mabuchi $K$-energy is proper on $\PSH(X, \omega')\cap C^\infty(X)$. In particular, there exists a unique cscK  metric in $\gamma$.

\end{lemma}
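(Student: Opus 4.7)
The strategy is to invoke the Chen-Cheng theorem, which reduces the existence of a cscK metric in $\gamma$ to properness (coercivity) of the Mabuchi $K$-energy $\mathcal{M}$ on $\PSH(X,\omega')\cap C^\infty(X)$, modulo automorphisms (which are trivial here since $K_X$ is nef and $\gamma$ is of general type). Thus it suffices to establish a bound
$$\mathcal{M}(\varphi)\geq \delta\, J(\varphi)-C$$
for some $\delta,C>0$, with $J$ Aubin's $J$-functional. The starting point is Chen's classical decomposition
$$\mathcal{M}(\varphi) \;=\; \mathcal{H}(\varphi) \;+\; \mathcal{J}_{-\mathrm{Ric}(\omega')}(\varphi),$$
where $\mathcal{H}(\varphi)=\int_X \log(\omega_\varphi^n/\omega'^n)\,\omega_\varphi^n$ is the relative entropy and $\mathcal{J}_\chi$ is the twisted $J$-energy. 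Writing $\eta=-\mathrm{Ric}(\omega')+\ddbar u$ for some smooth $u$, this rearranges to
$$\mathcal{M}(\varphi) \;=\; \mathcal{H}(\varphi) \;+\; \mathcal{J}_\eta(\varphi) \;+\; O(1).$$

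The proof will then bound each piece using one of the two hypotheses. For the entropy, the positivity $\eta+\epsilon\omega'>0$ means $\eta$ sits strictly above $-\epsilon\omega'$ in the cone of positive $(1,1)$-forms; combined with Jensen's inequality applied to a suitable twisted volume form and the definition (\ref{alphainv}) of the $\alpha$-invariant, one obtains a lower bound of the shape $\mathcal{H}(\varphi)\geq \alpha(X,\gamma)\,(I-J)(\varphi)-C$ involving Aubin's $I,J$ functionals. For the twisted $J$-energy, the pointwise inequality $((n+\epsilon)\omega'-(n-1)\eta)\wedge\omega'^{n-2}>0$ is a subsolution condition of Song-Weinkove type for a $J$-equation with twist $\eta$, and the monotonicity of the associated $J$-flow together with Chen's integration-by-parts identity delivers the matching estimate
$$\mathcal{J}_\eta(\varphi)\;\geq\;-\tfrac{n\epsilon}{n+1}\,J(\varphi)-C.$$

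Adding the two inequalities produces
$$\mathcal{M}(\varphi)\;\geq\;\tfrac{n}{n+1}\!\left(\tfrac{n+1}{n}\alpha(X,\gamma)-\epsilon\right)J(\varphi)-C',$$
which is strictly coercive precisely because of the hypothesis $\epsilon<\tfrac{n+1}{n}\alpha(X,\gamma)$. Properness of $\mathcal{M}$ then forces the existence of a cscK metric by Chen-Cheng, and uniqueness follows from Berman-Berndtsson. The main anticipated obstacle lies in the second estimate: pinning down the numerical constant $\tfrac{n\epsilon}{n+1}$ requires isolating the exact twisted $J$-energy that appears in Chen's decomposition of $\mathcal{M}$ and checking that the specific algebraic form of the hypothesis $((n+\epsilon)\omega'-(n-1)\eta)\wedge\omega'^{n-2}>0$ converts into that precise lower bound via $J$-flow monotonicity --- the asymmetry between the exponents $n+1$ and $n-1$ reflecting how the scalar curvature equation pairs with $\omega^{n-1}$ while the $J$-equation pairs with $\omega^{n-2}\wedge\chi$, so that the compatibility with the $\alpha$-invariant exponent $\tfrac{n+1}{n}$ only barely closes.
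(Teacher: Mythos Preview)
The paper does not give its own proof of this lemma; it simply cites \cite{JSS} (Lemma 2.1) together with the Chen--Cheng existence theorem \cite{CC}. Your sketch is essentially the argument of \cite{JSS}, which in turn packages the ideas of \cite{Ch1, W2, SW1, LSY}: Chen's decomposition $\mathcal{M}=\mathcal{H}+\mathcal{J}_{-\mathrm{Ric}}$, Tian's $\alpha$-invariant bound on the entropy term, a Song--Weinkove-type lower bound on the twisted $J$-energy coming from the subsolution hypothesis, and finally Chen--Cheng to pass from properness to existence. So the overall strategy is correct and matches what the paper invokes.

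One point of confusion worth correcting: you assign the hypothesis $\eta+\epsilon\omega'>0$ to the entropy estimate and the hypothesis $\big((n+\epsilon)\omega'-(n-1)\eta\big)\wedge(\omega')^{n-2}>0$ to the $J$-energy estimate. In fact the entropy bound $\mathcal{H}(\varphi)\geq \alpha(X,\gamma)(I-J)(\varphi)-C$ uses only the $\alpha$-invariant and nothing about $\eta$. Both hypotheses on $\eta$ feed into the $J$-energy step: the positivity $\eta+\epsilon\omega'>0$ is needed so that $\chi:=\eta+\epsilon\omega'$ is a genuine K\"ahler form (otherwise the Song--Weinkove theory does not apply), and a short computation shows that the second hypothesis is \emph{exactly} the subsolution condition $n(1+\epsilon)(\omega')^{n-1}-(n-1)(\omega')^{n-2}\wedge\chi>0$ for the $J$-equation with respect to $\chi$. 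One then gets $\mathcal{J}_\chi\geq -C$, and since $\mathcal{J}_\eta=\mathcal{J}_\chi-\epsilon\,\mathcal{J}_{\omega'}$ with $\mathcal{J}_{\omega'}$ expressible in terms of Aubin's $I,J$, the claimed lower bound on $\mathcal{J}_\eta$ follows. With this reallocation of hypotheses your final coercivity inequality and the constraint $\epsilon<\tfrac{n+1}{n}\alpha(X,\gamma)$ are exactly as in \cite{JSS}. The remark about automorphisms is unnecessary: once $\mathcal{M}$ is genuinely proper (not merely $G$-coercive), the identity component of $\mathrm{Aut}(X)$ is automatically trivial.
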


Now we can apply Theorem \ref{main1} and Lemma \ref{s8jss} to prove  the following theorem (Theorem \ref{maincsc}).

\begin{proof}[Proof of Theorem \ref{maincsc}.] If  $K_X \cdot \gamma^{n-1} >0$, we can assume that $K_X \cdot \gamma^{n-1} = \gamma^n$ by rescaling $\gamma$. Then 
$$\frac{ (K_X + \epsilon \gamma)\cdot \gamma^{n-1} }{ \gamma^n} = 1+\epsilon.$$
We let 
$$\gamma_\epsilon = (1+\epsilon) \gamma, ~~\beta_\epsilon = K_X + \epsilon \gamma.$$
 Then both $\gamma_\epsilon$ and $\beta_\epsilon$ are K\"ahler for any $\epsilon\in (0, \alpha(X, \gamma))$ and $\gamma_\epsilon^n = \gamma_\epsilon^{n-1} \cdot \beta_\epsilon$.
Furthermore, for any $m$-dimensional analytic subvariety $Z$ of $X$
\begin{eqnarray*}
&& \left( n \gamma_\epsilon^m - m \gamma_\epsilon^{m-1} \cdot \beta_\epsilon \right) \cdot Z\\
&=&  (1+\epsilon)^{m-1} \left(  (n+ (n-m) \epsilon ) \gamma  - m K_X  \right) \cdot \gamma^{m-1}\cdot Z\\
&>&0
\end{eqnarray*}
by the assumption of Theorem \ref{maincsc}. We can now apply Theorem \ref{main1} or Corollary \ref{main2} and  so there exist K\"ahler forms $\omega_\epsilon\in \gamma_\epsilon$ and $\chi_\epsilon \in \beta_\epsilon$ such that
\begin{equation}\label{cscpos1}
 \left( n \omega_\epsilon - (n-1) \chi_\epsilon \right)  \wedge \omega_\epsilon^{n-2} >0.
 \end{equation}
We let $$\omega'_\epsilon =(1+\epsilon)^{-1} \omega_\epsilon\in \gamma, ~~\eta_\epsilon  = \chi_\epsilon -  \epsilon \omega'_\epsilon \in [K_X]. $$
Then $\eta_\epsilon$ is K\"ahler and (\ref{cscpos1}) is equivalent to
$$\left(  (n+\epsilon) \omega'_\epsilon - (n-1)\eta_\epsilon \right) \wedge (\omega'_\epsilon)^{n-2} >0.$$
Then we can apply Lemma \ref{s8jss} and so there exists a unique cscK  metric in $\gamma$.

If $K_X\cdot \gamma^{n-1} =0$, then we claim that $K_X\cdot \gamma^{m-1}\cdot Z=0$ for any $m$-dimensional analytic subvariety $Z$ of $X$ by the following argument. By Poincare duality, there exists a smooth closed real $(n-m, n-m)$-form $\Omega$ such that
$$K_X \cdot \gamma^{m-1}\cdot Z = \int_X \eta\wedge \theta^{m-1} \wedge \Omega\geq 0$$
since $K_X$ is nef, where $\eta$ is smooth closed $(1,1)$-form in $[K_X]$ and $\theta$ is a K\"ahler form in $\gamma$. Without loss of generality, we can assume $\Omega \leq C \theta^m$ for some fixed $C>0$.  For any $\epsilon>0$, we choose $\eta_\epsilon$ to be a K\"ahler from in $K_X+ \epsilon \gamma$. Then
$$K_X \cdot \gamma^{m-1}\cdot Z =\lim_{\epsilon\rightarrow 0} \int_X\eta_\epsilon \wedge \theta^n \wedge \Omega \leq C \lim_{\epsilon\rightarrow 0} \int_X \eta_\epsilon\wedge \theta^{n-1}=CK_X\cdot \gamma^{n-1}=0.$$

Now we let  
$ \beta = K_X + \gamma. $
Then $\gamma^n = \gamma ^{n-1} \cdot \beta $ and for any $m$-dimensional analytic subvariety $Z$ of $X$
\begin{eqnarray*}
&& \left( n \gamma^m - m \gamma^{m-1} \cdot \beta \right) \cdot Z\\
&=&   \left(  (n-m  ) \gamma  - m K_X  \right) \cdot \gamma^{m-1}\cdot Z\\
&=&  (n-m)  \gamma^m\cdot Z\\
&>&0.
\end{eqnarray*}
As we argued before, there exist K\"ahler forms $\omega  \in \gamma $ and $\chi  \in \beta $ satisfying
$$n \omega^{n-1}- (n-1) \omega^{n-2} \wedge \chi.$$ 
By letting $\eta=  \chi  - \omega $, we have
$$(\omega   - (n-1) \eta ) \wedge  \omega ^{n-2}>0.$$ 
We can again apply Lemma \ref{s8jss} and complete the proof of Theorem \ref{maincsc}.
\end{proof}

\begin{proof}[Proof of Corollary \ref{maincsc2}.]  Let $\gamma$ be a K\"ahler class on $X$ and $\gamma_\epsilon = K_X + \epsilon \gamma$ for $\epsilon>0$.  

We first assume that $\gamma^{n-1}\cdot K_X>0$. Then for any $m$-dimensional analytic subvariety $Z$ with $m<n$,  
\begin{eqnarray*}
&& m\left. \frac{ \gamma_\epsilon^{m-1}\cdot K_X }{  \gamma_\epsilon^m  } \right|_Z \\
&=& \left. m \frac{ \sum_{l=1}^{m}  \begin{pmatrix} m-1\\ l-1 \end{pmatrix}\epsilon^{m-l} \gamma^{m-l}  \cdot (K_X)^l }{\sum_{l=0}^m  \begin{pmatrix} m\\ l \end{pmatrix}\epsilon^{m-l} \gamma^{m-l}  \cdot (K_X)^l  }  \right|_Z   \\
&\leq& m\max_{l=1, ..., m} \frac{  \begin{pmatrix} m-1\\ l-1 \end{pmatrix} }{\begin{pmatrix} m\\ l \end{pmatrix}}\\
&=&\max_{l=1, ..., m} l \\
&=&m.
\end{eqnarray*}
On the other hand, if we let $\nu = \max\{l\geq 0~|~\gamma^{n-l} \cdot (K_X)^l >0\}$ be the numerical dimension of $K_X$, then 
\begin{eqnarray*}
&& n  \frac{ \gamma_\epsilon^{n-1}\cdot K_X }{  \gamma_\epsilon^n  }   \\
&=& n \frac{ \sum_{l=1}^{\nu}  \begin{pmatrix} n-1\\ l-1 \end{pmatrix}\epsilon^{n-l} \gamma^{n-l}  \cdot (K_X)^l }{\sum_{l=0}^{\nu}  \begin{pmatrix} n\\ l \end{pmatrix}\epsilon^{n-l} \gamma^{n-l}  \cdot (K_X)^l  }      \\
&=& n + O(\epsilon).
\end{eqnarray*}
Then for sufficiently small $\epsilon$, $\gamma_\epsilon$ satisfies the estimate (\ref{slopecsc}) in Theorem \ref{maincsc} for any analytic subvariety $Z$ of $X$.

If $\gamma^{n-1}\cdot K_X=0$, then $K_X\cdot \gamma^{m-1}\cdot Z=0$ for any $m$-dimensional analytic subvariety $Z$ of $X$. The corollary immediately follows from Theorem \ref{maincsc}.

\end{proof}
 
\bigskip
\bigskip

\noindent {\bf{Acknowledgements:}} The author would like to thank Ved Datar, Slawomir Dinew and Xin Fu for stimulating discussions.  
\bigskip
\bigskip

\end{document}